\def\blfootnote{\xdef\@thefnmark{}\@footnotetext}
\theoremstyle{plain}
\newtheorem{theorem}{Theorem}[section]
\newtheorem{corollary}[theorem]{Corollary}
\newtheorem{cor}[theorem]{Corollary}
\newtheorem{lemma}[theorem]{Lemma}
\newtheorem{proposition}[theorem]{Proposition}
\newtheorem*{namedtheorem}{\theoremname}
\newcommand{\theoremname}{testing}
\theoremstyle{definition}
\newtheorem{define}[theorem]{Definition}
\newtheorem*{remark}{Remark}
\begin{document}


\title[An upper bound on Reidemeister moves]
{An upper bound on Reidemeister moves}

\author{Alexander Coward}
\address{Mathematics Department, University of California at Davis, CA 95616, USA}

\author{Marc Lackenby}
\address{Mathematical Institute, 24--29 St Giles', Oxford, OX1 3LB, England}


\begin{abstract}
We provide an explicit upper bound on the number of Reidemeister moves
required to pass between two diagrams of the same link. This leads to
a conceptually simple solution to the equivalence problem for links.
\end{abstract}

\maketitle

\newcommand{\mat}[2][cccc]{\left(\begin{array}{#1} #2\\
	\end{array}\right)}

\vspace{-20pt}

\section{Introduction}\label{sec:intro}

\let\thefootnote\relax\footnotetext{MSC (2010): 57M25, 57N10}
It is one of the most fundamental theorems in low-dimensional topology that any
two diagrams of a knot or link in $\mathbb{R}^3$ differ by a sequence of
Reidemeister moves, illustrated in Figure \ref{fig:reidemeister}.
Since Reidemeister's seminal paper \cite{RM} in 1926, it has been speculated as to whether there is an explicit upper bound for the number of moves that are needed, as a function of the number
of crossings in the initial and terminal diagrams. See \cite{RMbook} or page 15 of \cite{adams} for example. 
In a celebrated paper \cite{HL}, Hass and Lagarias provided such a bound when the link in question is the unknot, this bound being an exponential
function of the number of crossings in the diagrams. 
In this paper we answer the general question with the following theorem, which applies to all knots and links.


\begin{theorem} \label{maintheorem}
Let $D_1$ and $D_2$ be connected diagrams for some knot or link in $\mathbb{R}^3$,
and let $n$ be the sum of their crossing numbers. Then $D_2$ may be
obtained from $D_1$ by a sequence of at most $\exp^{\left( c^{n} \right) }(n)$
Reidemeister moves, where $c = 10^{1,000,000}$.
\end{theorem}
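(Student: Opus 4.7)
The plan is to convert the combinatorial question about Reidemeister moves into a three-dimensional problem about triangulated ambient isotopies of $S^3$, to solve that problem quantitatively, and then to translate the bound back to the plane. From each connected diagram $D_i$ I would construct, in a canonical way, a triangulation $T_i$ of $S^3$ in which the link $L$ appears as a subcomplex of the $1$-skeleton and whose number of tetrahedra is linear in $n$. Removing an open regular neighbourhood of $L$ produces two triangulations $T_1', T_2'$ of the same compact $3$-manifold $M = S^3 \setminus N(L)$, which has torus boundary components; the connectedness hypothesis on the diagrams is what lets me carry out the construction uniformly and keep the combinatorial data on the boundary under control.

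The heart of the argument is a quantitative Pachner-type theorem for triangulated link exteriors: any two triangulations of $M$, each built from at most $k$ tetrahedra, are related by a sequence of $2$-$3$ and $1$-$4$ bistellar moves whose length is bounded by an iterated exponential in $k$. For a link exterior I would establish this via a Haken hierarchy. One repeatedly cuts $M$ along an essential normal surface, bounding its weight by the fundamental normal surface bound (exponential in the tetrahedron count), and then retriangulates the resulting pieces, which costs another exponential. The depth of the hierarchy is at most $O(c^n)$ for a universal constant $c$, so iterating produces a tower of height $O(c^n)$ and hence a bound of the form $\exp^{(c^n)}(n)$ on the number of Pachner moves needed to pass from $T_1'$ to $T_2'$. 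Throughout, one must keep the boundary pattern (the peripheral tori carrying the meridian and longitude of each link component) marked so that the equivalence is of triangulated pairs, not just of $3$-manifolds.

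The final step is to realise a sequence of Pachner moves between triangulations of the pair $(S^3, L)$ as an ambient isotopy of $S^3$ whose generic levels are link diagrams. Choosing a generic height function compatible with the projection plane, each Pachner move contributes at most a polynomial-in-$n$ number of critical events at which the projected diagram changes, and these critical events are precisely the Reidemeister moves together with planar isotopies that do not alter the diagram. Composing across all Pachner moves in the hierarchy and absorbing polynomial overheads into $c$ yields the bound in the theorem statement.

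The main obstacle is the quantitative hierarchy argument in the middle step. One has to ensure that at every level the normal surface chosen is genuinely essential, that its weight is actually realised as an exponential in the current triangulation (not merely as an abstract complexity), and that cutting and retriangulating preserves the marked peripheral structure that encodes $L$. The presence of Seifert-fibered or hyperbolic pieces in the JSJ decomposition of $M$ requires different techniques at the bottom of the hierarchy: the Seifert-fibered case needs an effective classification by combinatorial invariants, while the hyperbolic case needs an effective Mostow rigidity statement giving a bound on triangulations of cusped hyperbolic manifolds with bounded volume. These pieces of bookkeeping complicate the constant $c$ but leave the overall tower-of-exponentials shape of the bound intact.
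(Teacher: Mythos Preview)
Your overall architecture matches the paper's: encode each diagram as a triangulation of a link exterior with linearly many tetrahedra, relate the two triangulations by a bounded number of Pachner moves via a quantitative hierarchy argument (this is Mijatovi\'c's work, strengthened here as Theorem~\ref{mijatovicmodified}), and then convert back to Reidemeister moves. The tower-of-exponentials shape, with height of order $c^n$ coming from the depth of the hierarchy, is exactly right.

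The genuine gap is in your final step. You write that ``each Pachner move contributes at most a polynomial-in-$n$ number of critical events'' under a generic height function. But a Pachner move is a combinatorial operation on an abstract $\Delta$-complex; it does not come with a canonical realisation as a motion of the link in $\mathbb{R}^3$, and there is no obvious way to project it. After a sequence of $N$ moves the triangulation need not be geometrically realised with straight simplices at all, so there is nothing to take a height function of. The paper's mechanism is different and is really the crux: one embeds the \emph{initial} triangulation in a convex $3$-ball $B$ with straight simplices, composes the PL homeomorphisms induced by \emph{all} the Pachner moves into a single homeomorphism $h\colon B\to B$ fixing $\partial B$, and then applies Alexander's trick (Section~\ref{sec:alexander}) to produce an explicit ambient isotopy from the identity to $h$. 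The quantitative control is that $h$ sends each straight arc to a concatenation of at most $4^N t$ straight arcs (Lemma~\ref{straight2}), and Alexander's trick only costs an additive $+2$ on this count. The intermediate link projections therefore have at most $O((4^N t)^2)$ crossings, and a counting argument (Corollary~\ref{numberofmoves}) converts this to a Reidemeister bound. This adds only a few levels to the tower, so the final shape survives, but the mechanism is not the one you sketched.

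Two smaller points. First, for the simple (in particular hyperbolic) pieces, neither the paper nor Mijatovi\'c invokes any form of effective Mostow rigidity; the hierarchy is built from normal surfaces throughout, with the first surface supplied by Culler--Shalen. An effective rigidity statement of the kind you describe is not available. Second, your outline silently assumes the link is non-split (so that the exterior is irreducible and Haken); the paper treats the split case separately via Hayashi's bound (Proposition~\ref{splitlink}). Finally, getting the Pachner sequence to respect the peripheral structure so that the resulting homeomorphism is the identity on $\partial M$ is genuinely delicate---Mijatovi\'c's theorem as stated does not give this, and strengthening it (Theorem~\ref{mijatovicmodified}) occupies a substantial portion of the paper.
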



\begin{figure}[h]
\centering
\includegraphics{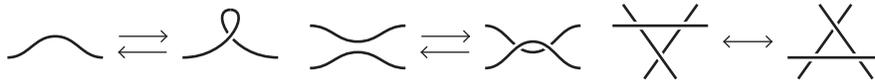}
\caption{Reidemeister moves} \label{fig:reidemeister}
\end{figure}

Here, a \emph{link diagram} is a 4-valent
graph embedded in $\mathbb{R}^2$ with each vertex decorated with `over' and `under' crossing information.  If a knot or link is oriented, then its diagrams have directed edges that agree with this orientation. Theorem \ref{maintheorem} applies to link diagrams with or without orientation. We view two diagrams as the same if their decorated (oriented) graphs are ambient isotopic in $\mathbb{R}^2$.
The function $\exp(x)$ is the exponential function $2^x$, and $\exp^{(r)}(x)$ means iterate
this function $r$ times. Thus, $\exp^{\left( c^{n} \right) }(n)$ is shorthand for a tower
of 2s with an $n$ at the top, the height of the tower being $c^n$. This upper bound is very large indeed, but it is explicit and computable. 
This therefore leads to a conceptually very simple algorithm for
solving the equivalence problem for links. Given two links $L_1$ and $L_2$
with connected diagrams $D_1$ and $D_2$, one can decide whether they are the same
link as follows. Let $n$ be the sum of their crossing numbers.
Apply all possible sequences of Reidemeister moves to $D_1$ of length
at most $\exp^{\left (c^{n} \right ) }(n)$. If $L_1$ and $L_2$ are equivalent
links, one of these diagrams will be ambient isotopic to $D_2$, and this can readily be
determined. On the other hand, if $L_1$ and $L_2$ are inequivalent, then
none of these diagrams will be isotopic to $D_2$.

It is trivial that there is some function $F \colon {\mathbb N} \times {\mathbb N}
\rightarrow {\mathbb N}$ such that any two connected diagrams $D_1$ and $D_2$
of a link with $n_1$ and $n_2$ crossings differ by a sequence of
at most $F(n_1,n_2)$ Reidemeister moves. This just follows from
the fact that there are only finitely many connected diagrams with a given number
of crossings, and Reidemeister's theorem. However, the existence
of a computable function $F$ is a much stronger statement,
as the following simple theorem demonstrates.

\begin{theorem} \label{computablefunction}
The following are equivalent:
\begin{enumerate}
 \item There is a computable function $F \colon {\mathbb N} \times {\mathbb N} \rightarrow {\mathbb N}$
such that for any two connected diagrams $D_1$ and $D_2$ of a link with $n_1$ and
$n_2$ crossings, there is a sequence of at most $F(n_1, n_2)$
Reidemeister moves that takes $D_1$ to $D_2$.
\item There is an algorithm to solve the equivalence problem for links. In other words,
there is an algorithm that takes as input two link diagrams
and determines whether or not they represent equivalent links.
\end{enumerate}
\end{theorem}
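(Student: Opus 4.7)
The plan is to prove the two implications separately, with the substantive content lying in $(2) \Rightarrow (1)$; the converse is essentially the algorithm already sketched after Theorem \ref{maintheorem}.

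For $(1) \Rightarrow (2)$, given the computable $F$ and two connected diagrams $D_1, D_2$ with $n_1, n_2$ crossings, I would compute $F(n_1, n_2)$ and then carry out a bounded enumeration of all sequences of at most $F(n_1, n_2)$ Reidemeister moves applied to $D_1$. At each stage the resulting diagram is a decorated planar 4-valent graph, a finite combinatorial object, and checking whether it equals $D_2$ in the sense of the excerpt (i.e.\ combinatorially isomorphic as a decorated embedded graph) is effectively decidable. By the defining property of $F$, the links are equivalent if and only if some such sequence reaches $D_2$.

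For $(2) \Rightarrow (1)$, suppose $\mathcal{A}$ is an algorithm deciding link equivalence. I would define $F(n_1, n_2)$ to be the maximum, over all pairs $(D_1, D_2)$ of equivalent connected diagrams with crossing numbers $n_1$ and $n_2$, of the length of a shortest Reidemeister sequence from $D_1$ to $D_2$, and then exhibit a computation of it. To evaluate $F(n_1,n_2)$, first enumerate the finite list of connected link diagrams with $n_1$ crossings (respectively $n_2$), up to the combinatorial equivalence representing planar ambient isotopy; this is algorithmic since one is enumerating finitely many finite combinatorial structures. For each pair in this list, invoke $\mathcal{A}$ to decide whether they represent equivalent links. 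For each equivalent pair, run a breadth-first search on the graph whose vertices are link diagrams and whose edges are single Reidemeister moves, starting at $D_1$; by Reidemeister's theorem this search is guaranteed to encounter $D_2$ in finitely many steps precisely because $\mathcal{A}$ has already certified equivalence. Record the BFS depth at which $D_2$ is first found and return the maximum over all equivalent pairs.

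The main conceptual obstacle — and the only place one must be careful — is the use of $\mathcal{A}$ prior to launching the breadth-first search. Without a prior certificate of equivalence, the BFS could run forever, and there would be no effective way to recognise that no connecting Reidemeister sequence exists; it is precisely $\mathcal{A}$ that eliminates this possibility and lets us take a maximum over a finite list of pairs known to be equivalent. Everything else — the enumeration of diagrams, the decidability of planar isotopy for decorated 4-valent graphs, and the generation of Reidemeister neighbours of a given diagram — reduces to routine manipulations of finite combinatorial data and does not require any further ideas.
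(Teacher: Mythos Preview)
Your proposal is correct and follows essentially the same approach as the paper. The paper's proof is given in the paragraph immediately following the statement: for $(1)\Rightarrow(2)$ it uses the bounded search you describe, and for $(2)\Rightarrow(1)$ it enumerates all connected diagrams with at most $\max\{n_1,n_2\}$ crossings, uses the hypothesised algorithm to group them by link type, and then searches for Reidemeister sequences within each group, taking the maximum length found; your version differs only cosmetically (pairs with exactly $n_1$, $n_2$ crossings and an explicit BFS rather than an unspecified search).
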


The proof of (1) $\Rightarrow$ (2) is described above. For
(2) $\Rightarrow$ (1), we need to produce an algorithm that, given natural numbers
$n_1$ and $n_2$, computes a natural number $F(n_1,n_2)$ with the required properties.
To do this, the computer enumerates all connected link diagrams with at most $\max \{n_1,n_2\}$
crossings. Then using the hypothesised algorithm, the computer
arranges these into groups according to their link type.
Then, the computer searches for Reidemeister moves relating all diagrams of
each type. Such sequences of moves exist by Reidemeister's theorem.
Hence, eventually, an upper bound on the number of moves
will be computed.

We call a function $F \colon {\mathbb N} \times {\mathbb N} \rightarrow {\mathbb N}$ a \emph{Reidemeister move function} for a link $L$ (which may be oriented or unoriented) if for any two connected diagrams $D_1$ and $D_2$ of $L$ with $n_1$ and
$n_2$ crossings, there is a sequence of at most $F(n_1, n_2)$
Reidemeister moves that takes $D_1$ to $D_2$. Thus, Theorem \ref{maintheorem} gives a Reidemeister move function that applies to all links.

The equivalence problem for links was solved by Haken \cite{hakennormal} and Hemion \cite{hemionart} in the 1960s and 1970s. In fact, an alternative solution to the homeomorphism problem for hyperbolic link complements, and hence the equivalence problem for hyperbolic knots, was given by Dahmani and Groves \cite{altrecog}, based on work of Sela \cite{sela}. 
Thus, the existence of a computable function $F$ as above was already known.
However, this is not `explicit'. We leave this term undefined, but we
hope that the reader will agree that the function provided by Theorem
\ref{maintheorem} is explicit, whereas that provided by Theorem
\ref{computablefunction} is not. In particular, the bound in Theorem \ref{maintheorem}
is primitive recursive, whereas this is not obviously true of the bound provided
by Theorem \ref{computablefunction}.

Theorem \ref{maintheorem} is proved using
triangulations and Pachner moves. Much as any two diagrams of a link
are related by a sequence of Reidemeister moves, any two triangulations
of a PL manifold are related by a sequence of Pachner moves \cite{pach1, pach2}.
The key theorem we use is an adaptation of a result of Mijatovi\'c  \cite{alexknot}, who
provides an explicit upper bound on the number of Pachner moves required
to pass between two triangulations of a knot exterior. 
However, the translation from a bound on Pachner moves to
a bound on Reidemeister moves is not a straightforward one. 

Starting with two connected diagrams $D_1$ and $D_2$ for a link, we pick
embeddings $L_1$ and $L_2$ of the link in $\mathbb{R}^3$ which project to these
diagrams and which both lie in some convex 3-ball. We use $D_1$ and
$D_2$ to build two triangulations for the link
exterior in this 3-ball. Using an adaptation of Mijatovi\'c's theorem, these are
related by a bounded number of Pachner moves. These induce an explicit PL homeomorphism between the
triangulated link exteriors. This extends to a homeomorphism of the
3-ball sending $L_1$ to $L_2$. The bound on the number of Pachner moves provides 
some control on this homeomorphism. We then apply Alexander's trick to specify an ambient isotopy sending $L_1$ to $L_2$, and this ambient isotopy is again of controlled complexity in a certain sense. The bound on the complexity of this ambient isotopy is ultimately what provides us with our upper bound for the number of Reidemeister moves required to pass between $D_1$ and $D_2$.

A significant complication arises from the fact that Mijatovi\'c's theorem is not quite sufficient for our purposes. His result provides an explicit upper bound on the number of Pachner moves required to pass between two triangulations of a link exterior, up to a homeomorphism that is isotopic to the identity in the hyperbolic pieces of the link's JSJ decomposition. This is insufficient, for two reasons. Firstly, many link exteriors have Seifert fibred pieces in their JSJ decomposition, and so the resulting homeomorphism of the link exterior may act non-trivially on the boundary, in which case it may not extend to a homeomorphism of the 3-ball. But Mijatovi\'c's theorem is not sufficient for our purposes even in the case of hyperbolic knots, because it only provides a bound on the number of Pachner moves {\it up to ambient isotopy}, and yet it is exactly an explicit ambient isotopy that we are aiming to construct.

It is therefore necessary for us to prove a strengthened version of
Mijatovi\'c's theorem. This takes some effort, because it requires us
to go through his proofs, originally exposed in \cite{alexs3}, \cite{alexff}, \cite{alexsf} and \cite{alexknot}, and adapt them. We have thus endeavoured to give an
accessible outline of Mijatovi\'c's work, highlighting the
similarities and differences between his methods and ours. The
following is our result.

\begin{theorem} \label{mijatovicmodified}
Let $M$ be a compact orientable irreducible 3-manifold
with boundary a non-empty collection of tori.
Suppose that the closure of each component of the complement of the characteristic
submanifold of $M$ satisfies at least one of the following conditions:
\begin{itemize}
\item it does not fibre over the circle; or
\item it is not a surface semi-bundle; or
\item it has at least two boundary components.
\end{itemize}
Let $T_{\partial M}$ be any triangulation of $\partial M$. Then there is a
triangulation ${\mathcal T}_{\rm can}'$ of $M$ with the following properties. Its restriction
to $\partial M$ equals $T_{\partial M}$. Further, if $T$ is any triangulation
of $M$ with $t$ tetrahedra, such that the restriction of $T$ to $\partial M$ also equals $T_{\partial M}$,
then there is a sequence of at most $\exp^{(a^{t})}(t)$
interior Pachner moves,
followed by a homeomorphism of $M$ that is the identity on $\partial M$,
taking $T$ to ${\mathcal T}_{\rm can}'$. This homeomorphism is isotopic to one equal to the identity on the complement of the characteristic submanifold of $M$. Here, $a = 2^{162}$.
\end{theorem}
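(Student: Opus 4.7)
The plan is to adapt, stage by stage, the sequence of arguments of Mijatovi\'c \cite{alexs3, alexff, alexsf, alexknot}, which progressively treat $S^3$, fibre-free Haken manifolds, Seifert-fibred manifolds, and knot exteriors. At each stage Mijatovi\'c gives an explicit Pachner-move bound for triangulations up to an ambient isotopy of the underlying manifold, and one must re-engineer his arguments so that (i) the triangulation of $\partial M$ is preserved pointwise throughout, (ii) general toroidal boundary (not only a single cusp) is allowed, and (iii) the residual homeomorphism is isotopic to one supported on the characteristic submanifold, rather than merely inside the hyperbolic pieces of the JSJ decomposition.

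The first step is to realise the JSJ tori of $M$ as normal surfaces in $T$ and then, via a bounded sequence of interior Pachner moves, make this collection of tori into a subcomplex of the triangulation. Because the JSJ tori lie in the interior of $M$, these moves can all be kept interior. Cutting along them yields hyperbolic and Seifert-fibred pieces, each equipped with an induced triangulation whose boundary is either a JSJ torus or a component of $\partial M$.

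For each hyperbolic piece $H$, Mostow rigidity pins down the hyperbolic structure, and Mijatovi\'c's method produces a canonical triangulation ${\mathcal T}_H$ extending any prescribed boundary triangulation; the homeomorphism relating any two triangulations of $H$ is then isotopic to the identity rel boundary, so this contribution to the final homeomorphism is trivial. For each Seifert-fibred piece $S$, the theorem's hypotheses on the complement of the characteristic submanifold ensure enough rigidity (uniqueness of the Seifert fibration up to isotopy, outside the few exceptional semi-bundle or one-boundary fibred cases) to produce canonical triangulations of $S$ as well, by triangulating the base orbifold and pulling back along the Seifert projection; the resulting homeomorphism need not be isotopic to the identity, but it is supported in $S$, hence in the characteristic submanifold, which is exactly what the theorem requires.

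The main obstacle I foresee is the gluing step: the canonical triangulations of the two sides of each JSJ torus need not agree along that torus. I would address this by fixing in advance a canonical triangulation of each JSJ torus---for example a normal triangulation of minimal weight with respect to the surrounding cusp geometry---and then showing that each piece admits a bounded sequence of further interior Pachner moves realising this canonical boundary triangulation while preserving the matching on $\partial M$. Composing the bounds at each stage produces the tower $\exp^{(a^{t})}(t)$; the constant $a = 2^{162}$ comes from accumulating the constants appearing in Mijatovi\'c's intermediate bounds, together with the extra overhead from keeping $\partial M$ fixed and aligning triangulations at the JSJ tori.
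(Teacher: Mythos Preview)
Your outline captures the broad architecture---decompose along JSJ tori, handle simple and Seifert pieces separately, reassemble---but it misses the central technical mechanism of the proof, and contains a misconception about what Mijatovi\'c's method actually provides.

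The key gap is your treatment of the boundary. You assert that Mijatovi\'c's method ``produces a canonical triangulation ${\mathcal T}_H$ extending any prescribed boundary triangulation''. It does not. Mijatovi\'c's ${\mathcal T}_{\rm can}$ has a specific boundary triangulation determined by how the hierarchy surfaces $S_1,\dots,S_n$ meet $\partial M$; this is generally \emph{not} the given $T_{\partial M}$. Moreover, his sequence of Pachner moves taking $T$ to ${\mathcal T}_{\rm can}$ genuinely uses boundary Pachner moves. The paper's main technical contribution is precisely to circumvent this: one first fixes a \emph{specified sequence} of 2-dimensional Pachner moves on $\partial M$ (depending only on $\partial S_i \cap \partial M$), builds from it a triangulated ``generalised collar'', and defines ${\mathcal T}_{\rm can}'$ as ${\mathcal T}_{\rm can}$ with this collar glued on. Then one shows that every boundary Pachner move in Mijatovi\'c's sequence can be simulated by a bounded number of \emph{interior} Pachner moves that grow the collar one tetrahedron at a time. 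Your proposal contains no analogue of this mechanism; saying ``these moves can all be kept interior'' is exactly the point requiring work.

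A second missing ingredient is the control on $\partial S_i \cap \partial M$. For the collar trick to produce ${\mathcal T}_{\rm can}'$ independent of $T$, the curves $\partial S_i \cap \partial M$ must be placed in a \emph{fixed} position (least-weight normal curves in $T_{i-1}(\partial M)$). But the normalisation of $S_i$ in $M_{i-1}$ may move its boundary. The paper proves a separate result (adjusting boundaries of normal surfaces) showing that one can re-isotope $S_i$ to a normal surface with the prescribed boundary curves, at polynomial cost in normal discs. Without this, the specified 2-dimensional sequence on $\partial M$ cannot be made to match the 3-dimensional boundary moves, and the collar argument collapses.

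Finally, two smaller points. The hypotheses on the complement of the characteristic submanifold are not about rigidity of Seifert fibrations; they are there so that in each \emph{simple} piece one can choose the first hierarchy surface to be neither a fibre nor a semi-fibre (via Culler--Shalen), which is what makes the hierarchy canonical. And the JSJ gluing is not handled by choosing a minimal-weight triangulation of each torus; rather, the paper inserts $T^2\times I$ buffers between adjacent simple (or adjacent Seifert) pieces and fills them with explicit annuli and discs whose boundary patterns are inherited from the neighbouring hierarchies, so that the resulting 2-complex has ball complements.
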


By an \emph{interior Pachner move}, we mean a Pachner move that does not affect the triangulation of the boundary of $M$. See Section \ref{sec:pltheory} for a precise definition. A \emph{surface semi-bundle} is a compact orientable 3-manifold obtained from two $I$-bundles over non-orientable surfaces by identifying their horizontal boundaries via a homeomorphism.

We use the phrase `canonical triangulation' for ${\mathcal T}_{\rm can}'$. 
The word `canonical' needs to be used with some caution, because it depends 
on the given triangulation $T_{\partial M}$ of $\partial M$, and arbitrary choices \emph{are} made in its construction.
It is built out of a collection of surfaces in $M$, which is very nearly a hierarchy.

The above theorem can of course be used to bound the number of interior Pachner moves required to pass between two triangulations of $M$ that are equal on $\partial M$. We call a function $F \colon {\mathbb N} \times {\mathbb N} \rightarrow {\mathbb N}$ a \emph{Pachner move function} for a compact 3-manifold $M$ if for any two triangulations $T_1$ and $T_2$ for $M$, equal on $\partial M$ and with at most $n_1$ and
$n_2$ tetrahedra, there is a sequence of at most $F(n_1, n_2)$
interior Pachner moves, followed by a homeomorphism of $M$ that is the identity on $\partial M$, that takes $T_1$ to $T_2$. 
Thus Theorem \ref{mijatovicmodified} implies that if $M$ is a 3-manifold satisfying the hypotheses of  the theorem, then $(n_1,n_2) \mapsto \exp^{(a^{n_1})}(n_1) + \exp^{(a^{n_2})}(n_2)$ is a Pachner move function for $M$ where $a = 2^{162}$. It is in this way that we will apply Theorem \ref{mijatovicmodified}.


The bound on
Pachner moves in Theorem \ref{mijatovicmodified}  gives the bound on Reidemeister moves in
Theorem  \ref{maintheorem}, via the following result.


\begin{theorem}\label{fromptor}
Let $L$ be an oriented non-split link in $S^3$. Suppose that $P_L \colon {\mathbb N} \times {\mathbb N} \rightarrow {\mathbb N}$ is a Pachner move function for the exterior of $L$ in $S^3$. Then $R_L \colon {\mathbb N} \times {\mathbb N} \rightarrow {\mathbb N}$ given by  $$R_L(n_1,n_2) = \exp^{\left( 2 \right) }({400(P_L(2^{14}(n_1+n_2),2^{14}(n_1+n_2)) + 2^{16}(n_1+n_2))} )$$ is a Reidemeister move function for $L$.
\end{theorem}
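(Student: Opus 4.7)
The plan follows the strategy outlined in the introduction. Given connected diagrams $D_1, D_2$ of $L$ with $n_1, n_2$ crossings, set $n := n_1 + n_2$; the goal is to exhibit a sequence of at most $\exp^{(2)}(400(P_L(2^{14}n, 2^{14}n) + 2^{16}n))$ Reidemeister moves between them. First I would realise $D_1, D_2$ as PL link embeddings $L_1, L_2$ inside a single convex 3-ball $B \subset \RR^3$, chosen so that a fixed vertical projection $\pi$ recovers $D_i$ from $L_i$. Each diagram canonically determines a triangulation of the link exterior $E(L_i) := B \setminus \operatorname{int}(N(L_i))$ of size linear in $n_i$; by subdividing and placing a fixed meridian--longitude triangulation on each boundary torus and on $\partial B$, I obtain triangulations $T_1, T_2$ of $E(L_1), E(L_2)$ that share a common boundary triangulation and each contain at most $2^{14}n$ tetrahedra (the constant $2^{14}$ absorbing the explicit cost of this preparation).

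Next I invoke the hypothesis. Regarding $E(L_1)$ and $E(L_2)$ as canonically identified via their matching boundary triangulations, there is a sequence of at most $P := P_L(2^{14}n, 2^{14}n)$ interior Pachner moves, followed by a homeomorphism $h$ of the common exterior $E(L)$ equal to the identity on $\partial E(L)$, taking $T_1$ to $T_2$. Each Pachner move is a constant-size local PL modification, so the whole sequence assembles into an explicit PL homeomorphism $\Phi$ of $E(L)$ agreeing with $h$ on $\partial E(L)$. Because $\Phi$ fixes $\partial B$ and preserves the chosen meridian triangulations on the boundary tori, it extends canonically across each $N(L_i)$ and by the identity outside $B$ to a PL self-homeomorphism $\Psi$ of $B$ with $\Psi(L_1) = L_2$ and $\Psi|_{\partial B} = \operatorname{id}$. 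A common refinement of the intermediate triangulations contains at most $\exp(c_1(P + n))$ simplices for an explicit constant $c_1$, which bounds the combinatorial complexity of $\Psi$.

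By Alexander's trick, $\Psi$ is isotopic to the identity rel $\partial B$; coning in $B \times [0,1]$ from a point on $\partial B \times \{1\}$ provides an explicit PL ambient isotopy $\{\Psi_t\}_{t \in [0,1]}$ from $\operatorname{id}$ to $\Psi$, with isotopy track living in a triangulated region of $B \times [0,1]$ of size at most $\exp(c_2(P + n))$. I then perturb so that $\pi \circ \Psi_t(L_1)$ is in general position for $\pi$: at all but finitely many times it is an honest link diagram, and at each exceptional time a single triple point, tangency, or vertical cusp occurs, corresponding to exactly one Reidemeister III, II, or I move on the projected diagram. The number of such exceptional times is polynomial in the simplex count of the track, while the subdivisions required to achieve a generic projection cost at most one further exponential --- producing the total bound $\exp^{(2)}(400(P + 2^{16}n))$ claimed in the statement.

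The main obstacle will be controlling the complexity of Alexander's trick together with the generic perturbation. Naively, extending $\Phi$ across the solid tori and then subdividing to make the planar projection of the isotopy generic are both potentially very expensive PL operations, and it is not obvious \emph{a priori} that their combined cost fits within only two iterated exponentials. I would address this by working throughout with explicit triangulations of $B \times [0,1]$, tracking each combinatorial blow-up precisely, and realising the generic perturbation as a sequence of bistellar flips whose count is singly exponential in the complexity of $\Psi$. The specific constants $400$ and $2^{16}$ in the statement are chosen to swallow all the intermediate constants that arise in this bookkeeping.
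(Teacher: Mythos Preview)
Your overall strategy matches the paper's, but there is one genuine gap and two points where the paper's bookkeeping differs from yours in ways that matter for the constants.

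The gap is in your generic-perturbation step. Your description --- triple points, tangencies, and vertical cusps yielding Reidemeister III, II, I moves --- is the smooth picture. Here the isotopy is piecewise linear, and at the finitely many times where edges of $h_t(L)$ collapse or are born, the projection does \emph{not} undergo a single Reidemeister move; instead a small \emph{unknot summand} appears or disappears. These events cannot be perturbed away in the PL category. The paper handles them with a separate result (a mild extension of Hass--Lagarias) bounding the Reidemeister cost of adding or removing a single unknot summand by $2^{10^{11}n}$ in crossing number $n$. Without this ingredient your argument does not close.

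On bookkeeping: first, the paper inserts a preliminary step you omit --- apply type~1 moves to $D_1,D_2$ to equalise the writhes of corresponding components, so that the meridian--longitude framings on the torus boundaries genuinely match and the exterior homeomorphism extends over the solid tori; your ``extends canonically across each $N(L_i)$'' is not justified otherwise. Second, the paper measures complexity not by the size of a common refinement but by how many straight arcs a single straight arc is sent to (at most $4$ per Pachner move, giving $4^N t$ for the composite); via Alexander's trick this yields a uniform bound of roughly $(4^N t \cdot 6t)^2$ on the crossing number of every intermediate projection. Rather than count exceptional times directly as you propose, the paper then bounds the number of \emph{distinct} connected diagrams with at most that many crossings (a Tutte/Welsh enumeration gives $\leq 48^{k+1}$), takes the shortest sequence through them, and multiplies by the unknot-summand cost above. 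It is this combination that produces the double exponential with the specific constants $400$ and $2^{16}$ in the statement.
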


An outline of this paper is as follows. In Section 2,
we introduce some of the basic PL machinery that we require.
In Section 3, we explain how to work with triangulations of the 3-ball rather than
the 3-sphere. In Section 4, we explain how to construct, starting with a connected diagram of a link $L$,
an explicit triangulation $T$ of a 3-ball that contains $L$
as a subcomplex and also possesses some other properties that we will require. In Section 5, we review Alexander's trick and use it to show how one may pass from a homeomorphism of the 3-ball, of known complexity, sending $L_1$ to $L_2$, to an ambient isotopy sending $L_1$ to $L_2$ also of known complexity.  This ambient isotopy induces a 1-parameter family of link projections interpolating between $D_1$ and $D_2$. However these link projections may not be diagrams in the usual sense. Thus in Section 6, we show how one may pass from a 1-parameter family of link projections to a sequence of diagrams each related to the next by a single Reidemeister move. In Section 7, Theorem \ref{fromptor} is proved, leading quickly to Theorem \ref{maintheorem}.
In Sections 8 and 9, we state Mijatovi\'c's result, compare it with Theorem \ref{mijatovicmodified}, and
summarise his proof in the case of simple and Seifert fibred 3-manifolds. In Sections 10, 11, 12 and 13, we
give our proof of Theorem \ref{mijatovicmodified}.

\section{Piecewise linear theory}\label{sec:pltheory}

Our main theorem is proved using piecewise linear techniques. We therefore start by
giving precise definitions from this theory.

For $n \in {\mathbb N}$, let $\Delta^n$ be the standard $n$-simplex.
An \emph{abstract $\Delta$-complex} $K$ is:
\begin{itemize}
\item an indexing set $A(n)$, for each natural number $n$;
\item a copy $\Delta_\alpha^n$ of the standard $n$-simplex for each $\alpha \in A(n)$;
\item for each $\Delta_\alpha^n$, where $n > 0$,
an affine homeomorphism between each $(n-1)$-dimensional face of $\Delta_\alpha^n$
and some $\Delta_\beta^{n-1}$ for $\beta \in A(n-1)$.
\end{itemize}

The term $\Delta$-complex was also used by Hatcher in \cite{hatcher}.
However, his use of the term was slightly different. He ordered
the vertices of each simplex $\Delta_\alpha^n$ and
required that each affine homeomorphism from a face of
$\Delta_\alpha^n$ to $\Delta_\beta^{n-1}$ preserved the
ordering of the vertices. We will not make that requirement.

The basic example of an abstract $\Delta$-complex is a simplicial
complex.

One forms the \emph{underlying space} $|K|$ of an abstract $\Delta$-complex $K$ by starting with the
disjoint union of the simplices, and identifying each face of
each simplex $\Delta_\alpha^n$ with the corresponding simplex
$\Delta_\beta^{n-1}$. We shall use the term \emph{$\Delta$-complex}
to denote either an abstract $\Delta$-complex or its underlying space.

A \emph{combinatorial isomorphism} between abstract $\Delta$-complexes
$K$ and $K'$ is, for each natural number $n$, a bijection between their
indexing sets $A(n)$ and $A'(n)$, together with a collection of affine homeomorphisms $\Delta_\alpha^n \rightarrow \Delta_{\alpha'}^n$ for each $\alpha \in A(n)$
associated with $\alpha' \in A'(n)$, such that for each $(n-1)$-dimensional face $F$ of
$\Delta^n_\alpha$ and $F'$ of $\Delta^n_{\alpha'}$ that correspond, the following
diagram commutes:

\begin{diagram}
F &\rTo &F'\\
\dTo_{\cong} & &\dTo_{\cong} \\
\Delta^{n-1}_\beta &\rTo & \Delta^{n-1}_{\beta'}
\end{diagram}
Here, the vertical maps are the affine homeomorphisms in the
definition of a $\Delta$-complex. The top horizontal map
is the restriction of the affine homeomorphism
$\Delta^n_\alpha \rightarrow \Delta^n_{\alpha'}$.
The bottom horizontal map is the affine
homeomorphism arising from the bijection between $A(n-1)$ and
$A'(n-1)$. Put simply, a combinatorial isomorphism is a bijection
that preserves all the structure of the complex.

The combinatorial isomorphism between $K$ and $K'$
determines a homeomorphism $|K| \rightarrow |K'|$ such that the interior of each simplex of $|K|$
is sent to the interior of a simplex of $|K'|$ via an affine homeomorphism.
We shall also term this homeomorphism a \emph{combinatorial isomorphism}.

A \emph{subdivision} of a $\Delta$-complex $K$ is a $\Delta$-complex $K'$
together with a homeomorphism $h \colon |K'| \rightarrow |K|$
such that each simplex $\sigma$ of $|K'|$ is mapped into a simplex
of $|K|$ and the restriction of $h$ to $\sigma$ is affine.


A \emph{triangulation} $T$ of a space $M$ is a $\Delta$-complex $K$
together with a homeomorphism $M \rightarrow |K|$. (When $M$ already
has a PL structure, we insist that this homeomorphism is PL.) We let $|T|$
denote $|K|$. When $K$ is
actually a simplicial complex, we term this a \emph{genuine triangulation}.
(In traditional PL theory, what we term a `genuine triangulation'
is just called a `triangulation', and only these are considered.
However, it has become standard practice, particularly in low-dimensional
topology, to work with $\Delta$-complexes rather than restrict to
simplicial complexes.)


When $K'$ is
a subdivision of $K$, there is an induced homeomorphism $M \rightarrow |K'|$
which is the \emph{subdivided triangulation}.



Two triangulations $h \colon M \rightarrow |K|$ and $h' \colon M \rightarrow |K'|$
of a space $M$ are \emph{equal} if there is a combinatorial isomorphism
$c \colon |K| \rightarrow |K'|$ such that the following diagram commutes:

\begin{diagram}
& & M & & \\
& \ldTo_h & & \rdTo_{h'} \\
|K| && \rTo^{c} && |K'|
\end{diagram}

This is a rather restrictive condition. If we are only given a combinatorial isomorphism
$c \colon |K| \rightarrow |K'|$, there is an associated homeomorphism
$h'^{-1} c h \colon M \rightarrow M$
which \emph{realises} this combinatorial
isomorphism. There is no guarantee that this is the identity.
We say that the two triangulations are \emph{isotopy-equivalent}
if this homeomorphism is isotopic to the identity on $M$.
We also say that two triangulations are \emph{homeomorphism-equivalent}
if their $\Delta$-complexes are combinatorially isomorphic,
but with no restriction on the commutativity of the above diagram.
Thus, homeomorphism-equivalence of triangulations is essentially the same concept as combinatorial
isomorphism, but it is useful to have two different terms.

It is helpful to consider an example. Suppose that $M$ is a space with
infinite mapping class group. Then it is immediate that if it admits a
triangulation with $N$ simplices, it admits infinitely many such
triangulations that are homeomorphism-equivalent but isotopy-inequivalent, as follows.
Starting with one such triangulation $h \colon M \rightarrow |K|$,
one may apply non-isotopic homeomorphisms $\phi_i \colon M \rightarrow M$ ($i \in {\mathbb N}$)
to obtain new triangulations $h\phi_i$. Since $K$ is finite, there
are only finitely many combinatorial isomorphisms $|K| \rightarrow |K|$,
and these realise only finitely many isotopy classes of homeomorphisms
$M \rightarrow M$. Hence, infinitely many of these triangulations of
$M$ are distinct, even up to isotopy-equivalence. 
This lack of finiteness
is an important phenomenon that occurs often in the situations we will
examine. For example, the torus has infinite mapping class group,
as does the exterior of every satellite knot.

We will be concerned with triangulations of manifolds. In this case,
there is a well known way to vary a triangulation, via Pachner moves,
which are defined as follows. Let $T$ be a triangulation of an $n$-manifold $M$.
Let $\Delta$ be a standard $(n+1)$-simplex.
Let $D$ be a non-empty subset of $\partial \Delta$, consisting of $i<n+2$ faces
of dimension $n$. Suppose that some subcomplex of $T$ has interior that is
combinatorially isomorphic to the interior of $D$. (The typical case
is when the subcomplex itself is combinatorially isomorphic to $D$,
but we wish to permit the possibility that it may be obtained from
a copy of $D$ by identifying simplices on the boundary.) Then the operation of
removing from $T$ the copy of the interior of $D$ and inserting the interior of
${\rm cl}(\partial \Delta - D)$ is an \emph{interior
Pachner move}. This is known as an $(i,n+2-i)$ move,
since it replaces $i$ $n$-simplices with $n+2-i$ ones. See Figure \ref{explicitmoves}.

\begin{figure}[h]
\centering
\includegraphics{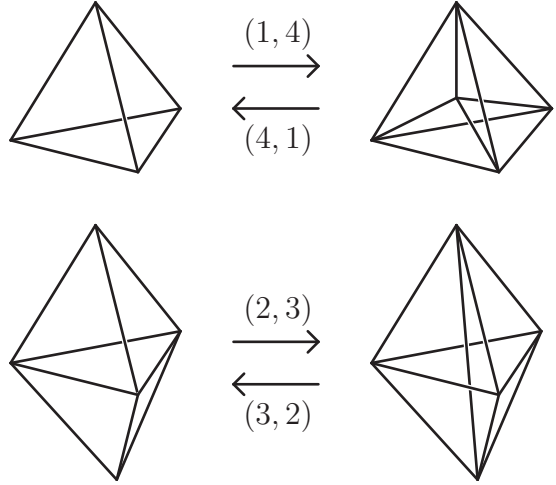}
\label{explicitmoves}
\caption{Three-dimensional Pachner moves}
\end{figure}

If triangulations $T$ and $T'$ are related by an interior Pachner move, then they have a common
subdivision. More precisely, one can also consider the triangulation
$T''$ that is obtained by replacing the interior of $D$ by the
cone on $\partial D$. It is possible to realize this cone as
a subdivision of both $D$ and ${\rm cl}(\partial \Delta - D)$, in a way that is canonical up to isotopy equivalence.
We therefore obtain homeomorphisms $h \colon |T''| \rightarrow |T|$ and
$h' \colon |T''| \rightarrow |T'|$, and hence a homeomorphism
$h' \circ h^{-1} \colon |T| \rightarrow |T'|$. See Figure \ref{pachsubdnew}.

\begin{figure}[h]

\psfrag{T}[][bl]{$T$}
\psfrag{Y}[][bl]{$T'$}
\psfrag{U}[][bl]{$T''$}

\centering
\includegraphics{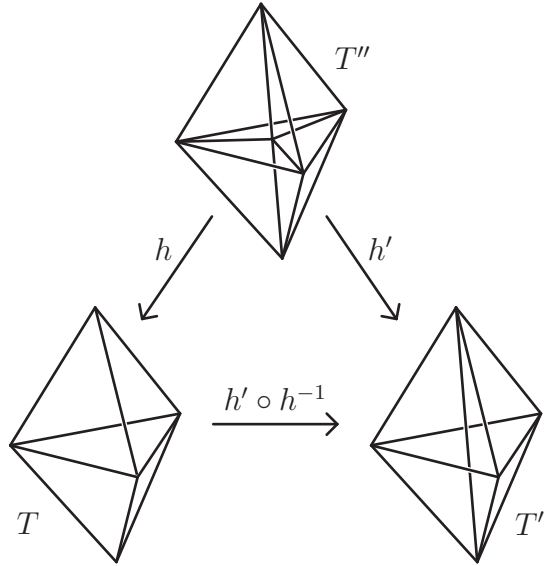}
\caption{$T''$ is a common subdivision of both $T$ and $T'$}
\label{pachsubdnew}
\end{figure}

If the $n$-manifold $M$ has non-empty boundary, Pachner moves do not
affect the triangulation of $\partial M$. Thus, it is necessary to
introduce a related move, which we call a \emph{boundary Pachner move}.
Here, one starts with a standard $n$-simplex $\Delta$
and a non-empty subset $D$ of $\partial \Delta$ consisting of $i < n+1$ faces
of dimension $n-1$. One finds a subset of $\partial M$ whose interior is combinatorially
isomorphic to the interior of $D$, and one attaches $\Delta$ to it along $D$. Alternatively, one
performs the reverse of this procedure. This operation has the effect of changing the triangulation on $\partial M$ by
an interior Pachner move.

%

A \emph{Pachner move} refers to an interior or a boundary Pachner move.

The following key result about Pachner moves is due to Pachner \cite{pach1,pach2}.
See also \cite{lickpach}.


\begin{theorem}
 Let $T$ and $T'$ be two triangulations of a closed PL manifold $M$.
Then, up to homeomorphism-equivalence, there is a finite sequence of Pachner moves that takes $T$ to $T'$.
\end{theorem}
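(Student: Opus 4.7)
The plan is to combine three classical ingredients: existence of a common PL subdivision, Alexander's stellar theorem, and an induction on dimension that simulates stellar subdivisions by Pachner moves. The first step is to invoke the basic fact from PL topology that any two triangulations $T$ and $T'$ of the PL manifold $M$ admit a common PL subdivision $T''$; this is where the hypothesis that $M$ is a PL manifold, rather than merely a topological one, enters. Granted this, it suffices to show that passage from a triangulation to any of its subdivisions can be achieved by a finite sequence of Pachner moves up to homeomorphism-equivalence, since each $(i, n+2-i)$ move is undone by an $(n+2-i, i)$ move; the resulting sequence $T \to T''$ can then be spliced with the reverse of the sequence $T' \to T''$ to obtain $T \to T'$.

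Second, I would apply Alexander's subdivision theorem, which asserts that any subdivision of a simplicial complex is obtained from the original by a finite sequence of elementary stellar operations — subdividing a simplex by coning over its boundary from an interior point — and their inverses. It thus suffices to prove that a single stellar subdivision at a simplex $\sigma$ in a triangulated PL manifold can be realized by a sequence of Pachner moves supported on the star of $\sigma$.

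Third, the heart of the argument is an induction on the dimension $n$ of $M$. Given a simplex $\sigma$, the link $L(\sigma)$ in $M$ is a PL sphere of dimension $n - \dim \sigma - 1$. By the inductive hypothesis (Pachner's theorem in lower dimension), the triangulation of $L(\sigma)$ can be transformed by Pachner moves into the boundary of a standard $(n - \dim \sigma)$-simplex. Each such lower-dimensional Pachner move on $L(\sigma)$ lifts to a Pachner move on the star $\sigma \ast L(\sigma)$ in $M$ by joining with $\sigma$. Once $L(\sigma)$ has been put in standard form, the stellar subdivision at $\sigma$ is precisely a single Pachner move, namely the one that introduces a new cone point. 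Running the lower-dimensional Pachner sequence in reverse then restores the surrounding triangulation, completing the simulation of the stellar subdivision. The base case $n = 1$ is immediate, since closed $1$-manifolds are circles and all their triangulations are related by edge subdivisions and their inverses.

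The main obstacle is Step 3: one must ensure that the induction on dimension is sound, and that the homeomorphism-equivalence allowed by the statement absorbs all the canonical reidentifications arising when combining common subdivisions with the cone constructions underlying each Pachner move. In particular, one must verify that the star operations used inductively on $L(\sigma)$ really do lift unambiguously to operations on the star of $\sigma$ in $M$, and that the final triangulation reached is combinatorially isomorphic to $T'$ rather than merely PL homeomorphic to it.
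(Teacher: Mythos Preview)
The paper does not give a proof of this theorem; it simply cites it as a known result of Pachner, with a reference also to Lickorish's exposition. Your overall strategy --- pass to a common subdivision, invoke Alexander's theorem to reduce to stellar moves and their inverses, then realise each stellar move as a composite of Pachner moves --- is exactly the architecture those references use, so at that level you are on the right track.

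Step 3, however, contains two genuine errors. First, joining a Pachner move on $L(\sigma)$ with $\sigma$ does \emph{not} produce a Pachner move on $M$: if the move on the link exchanges $A$ and $B$ with $A\cup B=\partial\Delta$, then $\sigma\ast A$ and $\sigma\ast B$ together form only $\sigma\ast\partial\Delta$, which is a proper subcomplex of $\partial(\sigma\ast\Delta)=\partial\Delta^{n+1}$; the piece $\partial\sigma\ast\Delta$ is missing, so the exchange $\sigma\ast A\leftrightarrow\sigma\ast B$ is not bistellar in dimension $n$. Second, even once $L(\sigma)$ has been put in the standard form $\partial\Delta^{n-\dim\sigma}$, the stellar subdivision at $\sigma$ is not a single Pachner move unless $\dim\sigma=n$: the star then consists of $n-\dim\sigma+1$ top simplices, and stellar subdivision replaces it by $(\dim\sigma+1)(n-\dim\sigma+1)$ top simplices, which does not fit the pattern $i\mapsto n+2-i$. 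The correct induction (as in Lickorish) runs on the \emph{codimension} of $\sigma$: the base case $\dim\sigma=n$ really is the $(1,n+1)$ move, and the inductive step uses Alexander's combinatorial identities expressing a stellar move at $\sigma$ as a composite of stellar moves at simplices of strictly higher dimension, which by induction are already Pachner composites.
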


One might then wish to determine a bound on this number of moves,
solely in terms of the number $t$ and $t'$ of top-dimensional
simplices in $T$ and $T'$. It trivially holds that there is a function
$F \colon {\mathbb N} \times {\mathbb N} \rightarrow {\mathbb N}$
such that any two triangulations $T$ and $T'$ differ by a
sequence of at most $F(t, t')$ Pachner moves, up to homeomorphism-equivalence.
However, any useful bound is impossible in general, as the following
theorem demonstrates. This is proved in much the same way
as Theorem \ref{computablefunction}.


\begin{theorem}
Let $M$ be a compact orientable PL $m$-manifold, where $m \leq 4$. Then the following are equivalent:
\begin{enumerate}
 \item There is a computable function $F \colon {\mathbb N} \times {\mathbb N} \rightarrow {\mathbb N}$
such that for any two triangulations $T_1$ and $T_2$ of $M$, with $t_1$ and $t_2$ $m$-simplices
respectively, there is a sequence of at most $F(t_1,t_2)$
Pachner moves that takes $T_1$ to a triangulation homeomorphism-equivalent to $T_2$.
\item There is an algorithm to recognize $M$ among all triangulated compact PL $m$-manifolds. In other words,
there is an algorithm that takes as input the triangulation of some compact $m$-manifold
and determines whether or not this manifold is PL homeomorphic to $M$.
\end{enumerate}
\end{theorem}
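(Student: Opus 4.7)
The argument is a direct analog of the proof of Theorem \ref{computablefunction}, with ``Pachner moves on triangulations'' playing the role of ``Reidemeister moves on diagrams'' and ``PL recognition of $M$'' playing the role of ``equivalence problem for links''. The dimension bound $m \leq 4$ enters only at one point, which I flag below as the main subtlety.

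\emph{Proof plan for} (1) $\Rightarrow$ (2). Fix once and for all a specific triangulation $T_M$ of $M$ with $t_M$ top-dimensional simplices. Given an input triangulation $T$ of some compact PL $m$-manifold $N$ with $t$ top-dimensional simplices, the algorithm applies every possible sequence of at most $F(t,t_M)$ Pachner moves to $T$, and for each resulting $\Delta$-complex checks (by exhaustively trying all bijections of indexing sets) whether it is combinatorially isomorphic to $T_M$. Output YES if at least one match is found, and NO otherwise. Correctness: Pachner moves preserve the PL homeomorphism type, so a YES answer certifies $N \cong_{PL} M$; conversely, if $N \cong_{PL} M$ then via any PL homeomorphism we may regard $T$ as a triangulation of $M$, whence hypothesis (1) guarantees a sequence of at most $F(t,t_M)$ Pachner moves from $T$ to a triangulation homeomorphism-equivalent to $T_M$, which the search will find.

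\emph{Proof plan for} (2) $\Rightarrow$ (1). Given $t_1,t_2 \in \NN$, set $n = \max\{t_1,t_2\}$ and proceed in three steps. First, enumerate all abstract $\Delta$-complexes with at most $n$ top-dimensional simplices; there are only finitely many up to combinatorial isomorphism. Second, from this list retain only those complexes whose underlying space is a PL $m$-manifold, and then run the algorithm promised by (2) on each retained complex to keep exactly those that triangulate $M$. This produces a finite list $\mathcal{L}$ of triangulations of $M$, each with at most $n$ top-dimensional simplices, which up to homeomorphism-equivalence includes every such triangulation. Third, for each ordered pair $(T_1,T_2)$ in $\mathcal{L} \times \mathcal{L}$, enumerate Pachner move sequences applied to $T_1$ by increasing length; after each step, check combinatorial isomorphism with $T_2$, and stop as soon as a match is found. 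By Pachner's theorem this search terminates. Define $F(t_1,t_2)$ to be the maximum length obtained over all such pairs. By construction, any two triangulations of $M$ with at most $t_1$ and $t_2$ top simplices are combinatorially isomorphic to some pair in $\mathcal{L}\times\mathcal{L}$, and hence are related by at most $F(t_1,t_2)$ Pachner moves up to homeomorphism-equivalence.

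\emph{The main obstacle.} The step above that genuinely uses $m \leq 4$ is the requirement that we be able to decide, for an arbitrary finite $\Delta$-complex, whether its underlying space is a PL $m$-manifold, so that we may legally feed the result into the recognition algorithm of (2). For $m \leq 2$ this is elementary, and for $m = 3$ it reduces to checking that each vertex link is a $2$-sphere, which is trivial combinatorially. For $m = 4$ it reduces to checking that each vertex link is PL homeomorphic to $S^3$, which is decidable by the Rubinstein--Thompson $3$-sphere recognition algorithm. This is the sole place where the hypothesis $m \leq 4$ is invoked; everything else in the argument is purely formal and works verbatim in all dimensions.
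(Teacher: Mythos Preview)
Your proof is correct and follows essentially the same approach as the paper, which explicitly notes that the argument is ``proved in much the same way as Theorem~\ref{computablefunction}'' and spells out only the $(2)\Rightarrow(1)$ direction along the lines you give. In particular, your identification of the unique role of the hypothesis $m\leq 4$---namely, that deciding whether a candidate $\Delta$-complex is a PL $m$-manifold requires $(m-1)$-sphere recognition, available only for $m\leq 4$---matches exactly the paper's explanation.
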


Note that, by a theorem of Novikov, 
there are closed $m$-dimensional manifolds which cannot be recognized among the set of all triangulated
PL $m$-manifolds, provided $m \geq 4$. Indeed, the $m$-sphere falls in this class when $m \geq 5$.
(See the appendix to \cite{nabutovsky} for example.) An explicit example of such a manifold
when $m = 4$ is the connected sum of 16 copies of $S^2 \times S^2$ \cite{shtanko}.

One might wonder why it is necessary to assume that $m \leq 4$ in the above result. This is required in 
the proof of $(2) \Rightarrow (1)$. Here, one starts with a recognition algorithm for $M$ and from this, 
one constructs an algorithm to compute $F(t_1, t_2)$ for any positive integers $t_1$ and $t_2$. To do this, 
one constructs all spaces obtained from $\max \{ t_1, t_2 \}$ $m$-simplices by identifying 
their $(m-1)$-dimensional faces in pairs. But one must then discard all spaces that do not form an $m$-manifold, 
and to do this, a recognition algorithm for the $(m-1)$-sphere is required. This is only known to exist for $m \leq 4$ \cite{thompson}.
Then, once one has constructed this collection of triangulated $m$-manifolds, one then applies the recognition 
algorithm for $M$ to discard the triangulations of manifolds not PL homeomorphic to $M$. The resulting 
triangulations of $M$ are all related by sequences of Pachner moves, which one can eventually construct. 
One defines $F(t_1, t_2)$ to be the maximum length of each such sequence of Pachner moves.

The solution to the recognition problem for Haken 3-manifolds was established
by Haken \cite{hakennormal} and Hemion \cite{hemionart}. Hence, for 3-manifolds $M$ in this class,
there are computable functions $F$ as above. Moreover, Mijatovi\'c in
\cite{alexknot} provided an explicit and easily computed function $F$ for
a large class of Haken 3-manifolds that includes the exteriors of all
non-split links in the 3-sphere. It is Mijatovi\'c's explicit upper bound, and the technology behind it,
that is the key to this paper.

\section{Pachner moves on punctured 3-manifolds}\label{sec:3ball}

In this section, we will deal with 3-manifolds which may have a
2-sphere boundary component. Our goal is to prove the following theorem.

\begin{theorem} \label{3balltriangulations}
Let $T_1$ and $T_2$ be triangulations of a 3-ball $B$.
Suppose that in a collar neighbourhood of $\partial B$, these
triangulations are equal, and are the same standard triangulation
of $\partial B \times I$.
Let $t_1$ and $t_2$ be the number of 3-simplices in $T_1$ and $T_2$ respectively.
Suppose that links $L_1$ and $L_2$ are subcomplexes of $T_1$ and $T_2$
respectively, and that they have triangulated neighbourhoods $N(L_1)$
and $N(L_2)$ that are combinatorially isomorphic. Suppose that the homeomorphism
from $N(L_1)$ to $N(L_2)$ that realises this combinatorial isomorphism
preserves the longitudal slope of each boundary component. Suppose also that $L_1$
and $L_2$ are ambient isotopic, and that $P \colon {\mathbb N} \times {\mathbb N} \rightarrow {\mathbb N}$ is a Pachner move function for the exterior of this ambient isotopy class of link in $S^3$. Then there exists a sequence of at
most $100 P(t_1,t_2) + 100 (t_1 + t_2)$ Pachner moves,
followed by a combinatorial isomorphism,
that takes $T_1$ to $T_2$, and $N(L_1)$ to $N(L_2)$. Furthermore,
none of the Pachner moves affect $N(L_1)$ or $\partial B$,
and the combinatorial isomorphism restricts to the identity on $\partial B$.
\end{theorem}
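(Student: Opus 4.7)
\emph{Strategy.} The plan is to reduce the problem for triangulations of the 3-ball $B$ to the Pachner-move bound on the link exterior $E_L \subset S^3$ provided by $P$. I cap off $B$ by attaching a second 3-ball $B'$ along $\partial B$, equipped with a cone-type triangulation $T'$ that agrees with the shared triangulation of $\partial B$ and uses $O(t_1+t_2)$ tetrahedra (linear in the number of triangles of $\partial B$). This produces two triangulations $\hat T_i := T_i \cup_{\partial B} T'$ of $S^3$, with $L_i$ as a subcomplex. Removing the interiors of $N(L_i)$ yields triangulations $\tilde T_i$ of $E_{L_i}$. The combinatorial isomorphism $N(L_1) \to N(L_2)$ (preserving longitudes), combined with the ambient isotopy from $L_1$ to $L_2$, supplies a PL identification $E_{L_1} \cong E_{L_2}$ under which $\tilde T_1$ and $\tilde T_2$ have identical restrictions to $\partial E_L = \partial N(L)$.

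\emph{Applying $P$ and lifting.} By hypothesis, there is a sequence $\Sigma$ of at most $P(|\tilde T_1|,|\tilde T_2|)$ interior Pachner moves on $E_L$, followed by a homeomorphism $h \colon E_L \to E_L$ that is the identity on $\partial E_L$, carrying the terminal triangulation of $\Sigma$ combinatorially onto $\tilde T_2$. Since $|\tilde T_i| = t_i + O(t_1+t_2)$, this count is comparable to $P(t_1,t_2)$ and is absorbed into the leading factor $100$ of the target bound. Each move in $\Sigma$ is interior, hence avoids $\partial N(L)$; extending by the identity on $N(L_1)$ promotes it to a Pachner move on $\hat T_1$ in $S^3$ that preserves $N(L_1)$ as a subcomplex.

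\emph{Returning to $B$ (main obstacle).} The difficulty is that moves in $\Sigma$ may occur inside $B'$ or straddle the interior surface $\partial B$, whereas the conclusion demands moves supported in $B \setminus N(L_1)$ together with a final combinatorial isomorphism $T_1' \to T_2$ that is the identity on $\partial B$. I insist that $\partial B$ remains a subcomplex throughout $\Sigma$: any move that would destroy this property is replaced by a short local subsequence, of bounded length, that effectively cuts along $\partial B$, performs the operation on each side, and re-glues. Only moves in an $O(t_1+t_2)$-size neighbourhood of $\partial B$ can interact with it, so the total overhead is $O(t_1+t_2)$, accounting for the additive $100(t_1+t_2)$ term. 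After this rearrangement, each move lies strictly in $B \setminus N(L_1)$ or strictly in $B'$; the latter are deleted, since they do not affect $T_1$ in $B$. The rigid collar triangulation of $\partial B \times I$ forces $h$ to send $\partial B$ to $\partial B$ by the identity, and $h$ extended by the identity on $N(L)$ and restricted to $B$ provides the required combinatorial isomorphism $T_1' \to T_2$ that is the identity on $\partial B$ and sends $N(L_1)$ onto $N(L_2)$ via the given identification. The principal technical hurdle is precisely this subcomplex-preservation of $\partial B$ throughout $\Sigma$, which is essentially a normal-surface bookkeeping argument made tractable by the rigid collar.
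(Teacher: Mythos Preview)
Your reduction to the link exterior in $S^3$ is the right first move, but the ``main obstacle'' paragraph is where the argument fails. The Pachner moves supplied by $P$ are interior to $E_L$, meaning they avoid $\partial N(L)$; the sphere $\partial B$, however, sits in the \emph{interior} of $E_L$, and nothing in the hypothesis on $P$ prevents those moves from crossing, subdividing, or obliterating it. Your claim that each offending move can be replaced by a ``short local subsequence of bounded length'' is unsupported: during $\Sigma$ the triangulation may swell to size comparable to $P(t_1,t_2)$, and any of those moves may hit the region that was once the collar, so the overhead is not $O(t_1+t_2)$ but potentially $O(P(t_1,t_2))$ or worse. Likewise, the final homeomorphism $h$ is only the identity on $\partial E_L=\partial N(L)$; the rigidity of the collar in the \emph{initial} and \emph{terminal} triangulations says nothing about how $h$ treats a $2$-sphere floating in the interior, so there is no reason $h(\partial B)=\partial B$.

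The paper sidesteps this entirely by tracking a \emph{point} rather than a surface. One removes $\mathrm{int}\,N(L_1)$ from $B$ to get $M$, then fills the sphere boundary $\partial B$ with a single $3$-simplex to obtain the link exterior $\hat M\subset S^3$ (so the tetrahedron count does not increase). After applying $P$ to the filled triangulations $\hat T_1,\hat T_2$, two elementary lemmas do the work: (i) if $x$ lies off the $2$-skeleton throughout, a single Pachner move on $\hat M$ induces at most $100$ interior moves on the punctured manifold $P(\hat M,T,x)$ obtained by excising the simplex containing $x$ and inserting the standard $T_{S\times I}$ (Lemma~\ref{puncturingbound}); and (ii) relocating the puncture from one interior point to another costs at most $100t$ moves and can realise any prescribed simplicial identification on the new sphere boundary (Lemma~\ref{basepointchange}). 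These give the multiplicative $100$ and the additive $100(t_1+t_2)$ directly, with no need to preserve $\partial B$ during the sequence.
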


We say that the \emph{standard triangulation}
of the 2-sphere $S$ is the boundary of a 3-simplex.

Let $T_{S \times I}$ be the following triangulation of $S \times I$.
Give each component of $S \times \partial I$ the same standard triangulation.
For each edge $e$ of this triangulation, insert $e \times I$ into
$S \times I$. Place a vertex in the interior of this rectangle, and
cone off the rectangle from this vertex. These rectangles divide $S \times I$
into a collection of balls. Insert a vertex in the interior of each
such ball and cone off the ball from this vertex. The resulting
triangulation is $T_{S \times I}$, which we term the \emph{standard triangulation}
of $S \times I$.

Theorem \ref{3balltriangulations} will be a consequence of the following result.

\begin{theorem} \label{2sphereboundary}
Let $M$ be a compact orientable 3-manifold, and let
$S$ be a 2-sphere boundary component of $M$. Suppose that the 3-manifold
that results from attaching a 3-ball to $S$  has $P \colon {\mathbb N} \times {\mathbb N} \rightarrow {\mathbb N}$ as a Pachner move function. Let $T_1$ and $T_2$ be triangulations of $M$
with at most $t_1$ and $t_2$ tetrahedra respectively.
Suppose that $T_1$ and $T_2$ are equal on $\partial M$, and
that in a collar neighbourhood of $S$, $T_1$ and $T_2$ are equal
and combinatorially isomorphic to $T_{S \times I}$. Then there is
a sequence of at most
$$100 P(t_1,t_2) + 100 (t_1 + t_2).$$
interior Pachner moves, followed by
a homeomorphism that is the identity on $\partial M$, taking
$T_1$ to $T_2$. 
\end{theorem}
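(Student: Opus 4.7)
The strategy is to cap off the 2-sphere boundary $S$ and invoke the assumed Pachner move function for the closed-up manifold $M'$. Since $T_1$ and $T_2$ both restrict to the standard triangulation of $S$ (four triangles, the boundary of a 3-simplex) in their collar neighbourhoods, I can attach a 3-ball $B$ along $S$, triangulated as a single 3-simplex by coning $S$ from one new interior vertex $v$. Setting $M' = M \cup_S B$, this extends each $T_i$ to a triangulation $T_i'$ of $M'$ with $t_i+1$ tetrahedra, and $T_1'$, $T_2'$ agree on $\partial M' = \partial M \setminus S$.

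The hypothesis that $P$ is a Pachner move function for $M'$ then yields a sequence of at most $P(t_1+1, t_2+1)$ interior Pachner moves on $M'$ followed by a $\partial M'$-fixing homeomorphism $\phi$ of $M'$ taking $T_1'$ to $T_2'$. Replacing $P$ by a monotone non-decreasing upper bound if necessary, this is at most a constant multiple of $P(t_1,t_2)$, absorbing the factor $100$. Further, by an ambient isotopy of $M'$ supported near $\phi^{-1}(v)$ and fixing $\partial M'$, I may modify $\phi$ so that the overall composite homeomorphism $\Phi\colon |T_1'|\to |T_2'|$ carries the cap 3-ball of $T_1'$ onto the cap of $T_2'$; this uses that two PL 3-balls in the interior of an orientable 3-manifold whose boundaries are isotopic $\partial M'$-rel locally flat 2-spheres are related by an ambient isotopy fixing $\partial M'$. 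Consequently $\Phi$ restricts to a $\partial M$-fixing self-homeomorphism of $M$ that intertwines $T_1$ with $T_2$.

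The principal task, and the main obstacle, is to convert the Pachner sequence on $M'$ into a sequence of interior Pachner moves on $M$: each Pachner move supported disjointly from the cap tetrahedron descends directly to an interior move of $M$, but moves that involve the cap tetrahedron or cross $S$ must be simulated by a bounded number of interior Pachner moves of $M$ performed inside the standard collar $T_{S\times I}$, which is present in both $T_1$ and $T_2$ and acts as a buffer absorbing the local combinatorial changes. Two features make this bookkeeping tractable: the initial and final simplicity of the cap (a single tetrahedron whose apex $v$ is the only vertex of $T_i'$ not in $T_i$), and the rigidity of the standard collar triangulation, which fixes once and for all how cap-crossing moves can be re-expressed on the $M$-side of $S$. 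Since there are only finitely many local types of cap-touching Pachner moves, each type can be translated into an $O(1)$-length sequence of interior Pachner moves of $M$; summing the $O(1)$ cost per cap-touching move across the whole sequence, and including the $O(t_1+t_2)$ overhead needed to set up and tear down the matching cap at the two endpoints, produces the overall bound $100P(t_1,t_2) + 100(t_1+t_2)$.
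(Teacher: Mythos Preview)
Your high-level strategy---cap off $S$, apply the Pachner move function for $\hat M$, then convert back---is exactly the paper's plan. The gap is in the conversion step. You speak of ``Pachner moves that involve the cap tetrahedron'' and moves that ``cross $S$'', but after even one Pachner move on $\hat M$ the original cap tetrahedron need not survive as a simplex, and $S$ is not a subcomplex of the intermediate triangulations. So the phrase ``cap-touching move'' has no meaning after the first step, and the claim that there are only finitely many local types of such moves is unfounded: the region that was once the cap can become arbitrarily subdivided and intertwined with the rest of the triangulation.

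The paper resolves this by tracking not the cap but a \emph{generic point}. One chooses a point $x \in \hat M$ lying off the $2$-skeleton of \emph{every} intermediate triangulation $T^0, \dots, T^N$ (possible since this is the complement of finitely many $2$-complexes). At each stage one removes the single tetrahedron containing $x$ and inserts a copy of $T_{S\times I}$; call the result $P(\hat M, T^i, x)$. Since $x$ sits in exactly one tetrahedron of each $T^i$, and a Pachner move touches at most four tetrahedra, one checks directly that $P(\hat M, T^{i}, x)$ and $P(\hat M, T^{i+1}, x)$ differ by at most $100$ interior Pachner moves. This gives the $100\,P(t_1,t_2)$ term. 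The endpoints $P(\hat M, T^0, x)$ and $P(\hat M, T^N, x)$ are not yet $T_1$ and $T_2$, because the puncture is at $x$ rather than at the original location $y$; a separate ``basepoint change'' lemma shows that moving the puncture from $x$ to $y$ costs at most $100t_i$ moves (slide through a chain of adjacent tetrahedra), which accounts for the $100(t_1+t_2)$ term and also absorbs the homeomorphism adjustment you handled by isotopy.

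A minor point: by replacing the collar $T_{S\times I}$ with a single $3$-simplex, the capped triangulations $\hat T_i$ have strictly fewer tetrahedra than $T_i$, so $P(t_1,t_2)$ applies directly with no need to pass to $P(t_1+1,t_2+1)$ or to assume monotonicity.
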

%

We shall need some terminology before embarking on the proof of Theorem \ref{2sphereboundary}.

\begin{define} Let $\hat M$ be a compact orientable 3-manifold, let $T$ 
be a triangulation  $\phi \colon
\hat M \rightarrow |T|$ of $\hat M$, and let 
$x$ be a point in $\hat M$ that is disjoint from the image under $\phi^{-1}$
of the 2-skeleton.
We define the following \emph{puncturing operation}. Remove the interior of
the 3-simplex containing $\phi(x)$, and insert $T_{S \times I}$.
Let $P(\hat M,T,x)$ be the resulting triangulation of the manifold
$\hat M - {\rm int}(N(x))$.
\end{define}

Let $T_1$ and $T_2$ be triangulations of the same 3-manifold $\hat M$. Suppose that
they differ by a sequence of $N$ interior Pachner moves
$$T_1 = T^0 \rightsquigarrow T^1 \rightsquigarrow \dots
\rightsquigarrow T^N = T_2.$$
For each relevant integer $i$, let $K^i$ be the $\Delta$-complex associated with
$T^i$, and let $h_i \colon |K^{i-1}| \rightarrow |K^i|$
be the homeomorphism resulting from the Pachner move. Let $x$ be a point in $\hat M$
disjoint from the inverse image in $\hat M$ of the 2-skeleton of
$|K^0|, |K^1|, \dots$ and  $|K^N|$. 


\begin{lemma} \label{puncturingbound}
The triangulations $P(\hat M,T^0,x)$ and $P(\hat M,T^N,x)$
differ, up to ambient isotopy fixed on $\partial \hat M \cup \partial N(x)$, 
by a sequence of at most $100N$ interior Pachner moves.
\end{lemma}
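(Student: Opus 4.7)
The plan is to bound, move-by-move, the punctured cost of each Pachner move in the given sequence: I will show that for each $i$, the triangulations $P(\hat M, T^{i-1}, x)$ and $P(\hat M, T^i, x)$ are related by at most $100$ interior Pachner moves (up to ambient isotopy fixed on $\partial \hat M \cup \partial N(x)$). Summing over $i$ then yields the bound $100N$.

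Fix $i$ and let $\Delta$ be the $3$-simplex of $T^{i-1}$ containing the image of $x$, and $D_i$ the subcomplex of $T^{i-1}$ modified by the $i$-th Pachner move. There are two cases. If $\Delta$ is not a tetrahedron of $D_i$, then the Pachner move takes place disjoint from $\Delta$, and in particular disjoint from the copy of $T_{S\times I}$ inserted to form $P(\hat M, T^{i-1}, x)$. The same Pachner move can then be performed verbatim in the punctured triangulation, producing $P(\hat M, T^i, x)$ at a cost of one move. In the remaining case, $\Delta$ is one of the at most four tetrahedra of $D_i$. Let $U = |D_i| = |D_i'|$, a PL $3$-ball, where $D_i'$ is the subcomplex of $T^i$ replacing $D_i$. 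Outside $U$ both punctured triangulations agree with the ambient $T^j$; inside $U$, they restrict to two triangulations of the spherical shell $U \setminus N(x)$, each obtained by taking the relevant Pachner region and replacing the tetrahedron containing $x$ by $T_{S\times I}$. After composing with an ambient isotopy supported in the interior of $U$ that aligns the combinatorial position of $\partial N(x)$, these two shell triangulations agree on both boundary components: on $\partial U$, because they come from the same ambient triangulation outside $U$, and on $\partial N(x)$, by construction of $T_{S\times I}$.

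Each of these shell triangulations has at most $K$ tetrahedra for some absolute constant $K$, determined by the size of a Pachner region together with that of $T_{S\times I}$. Since there are only finitely many triangulations of a spherical shell with a given bounded-complexity boundary and at most $K$ interior tetrahedra, Pachner's theorem together with finiteness yields a uniform bound on the number of interior Pachner moves required to pass between any two such triangulations. Taking the maximum over the four types of $3$-dimensional Pachner move and the possible positions of $\Delta$ inside the Pachner region gives an absolute constant which I claim is at most $99$, for a total cost of at most $99$ moves in the hard case and hence at most $100$ in either case. The main obstacle is precisely this explicit bound: while the existence of a universal constant is immediate, verifying that $99$ actually suffices requires either a finite case analysis over all shell triangulations of bounded complexity, or, equivalently, the direct construction for each combination of Pachner move type and position of $\Delta$ of a short sequence of interior Pachner moves in $U \setminus N(x)$ effecting the transition. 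The ambient isotopy allowance is essential, since it lets us freely reposition the removed ball $N(x)$ within $U$ so that the two shell triangulations truly agree on both boundary spheres.
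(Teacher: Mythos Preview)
Your proposal is correct and follows essentially the same approach as the paper: reduce to a single Pachner move, observe that there are only finitely many local configurations (four move types, a bounded number of positions for the punctured simplex), conclude that a universal constant exists, and assert that $100$ suffices. The paper's proof is briefer and simply says ``an elementary calculation, which is omitted, proves that $k=100$ suffices,'' while you spell out the easy/hard case split and invoke Pachner's theorem plus finiteness for the shell; but the underlying idea is identical, and both arguments ultimately defer the explicit verification of the constant.
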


\begin{proof} It clearly suffices to prove that for each $i$,
$P(\hat M,T^i,x)$ and $P(\hat M,T^{i+1},x)$ differ by a sequence
of at most $100$ Pachner moves. An example is shown in Figure \ref{fig:puncpachner},
where $T^i$ and $T^{i+1}$ differ by a $(1,4)$-move. (Note that, for clarity, not all
simplices in these triangulations are drawn.) Since there are
only 4 types of move, it is clear that there
is a universal constant $k$ such that
$P(\hat M,T^i,x)$ and $P(\hat M,T^{i+1},x)$ differ by at
most $k$ interior Pachner moves. An elementary calculation,
which is omitted, proves that $k = 100$ suffices.
\end{proof}

\begin{figure}[h]
\centering
\psfrag{d}[][Bl]{$P(\hat M,T^i,x)$}
\psfrag{e}[][Bl]{$P(\hat M,T^{i+1},x)$}

\includegraphics{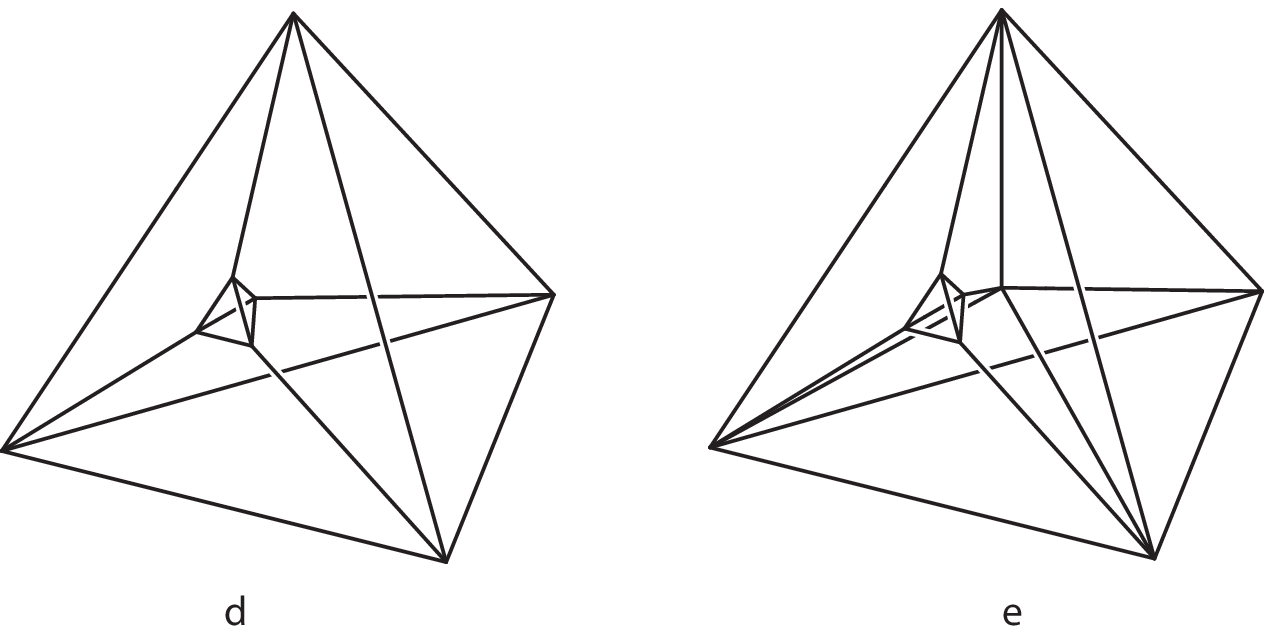}

\caption{} \label{fig:puncpachner}
\end{figure}

\begin{lemma} \label{basepointchange}
Let $T$ be a triangulation of a compact orientable
3-manifold $\hat M$ with $t$ tetrahedra. Let $x$ and $x'$ be points in $\hat M$
that are in the complement of the 2-skeleton. Then, $P(\hat M,T,x)$ and
$P(\hat M,T,x')$ are triangulations of $\hat M - {\rm int}(N(x))$
and $\hat M - {\rm int}(N(x'))$ respectively, from which $\partial N(x)$
and $\partial N(x')$ inherit triangulations. Let $h \colon \partial N(x)
\rightarrow \partial N(x')$ be a simplicial isomorphism.
Then, $P(\hat M,T,x)$ and $P(\hat M,T,x')$ differ by a
sequence of at most $100t$ interior Pachner moves, followed
by a homeomorphism $\hat M - {\rm int}(N(x)) \rightarrow \hat M - {\rm int}(N(x'))$.
We may arrange that this homeomorphism equals $h$ on $\partial N(x)$,
equals the identity on $\partial \hat M$, and extends to a homeomorphism
$\hat M \rightarrow \hat M$ that is isotopic to the identity.
\end{lemma}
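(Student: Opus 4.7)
The plan is to move the puncture from $x$ to $x'$ along a path through $\hat M$, performing only a bounded (in fact constant) number of interior Pachner moves each time the path crosses a 2-face of $T$.

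First I would connect $x$ to $x'$ by an arc $\alpha$ in $\hat M$ that is transverse to the 2-skeleton of $T$ and disjoint from the 1-skeleton; this is possible, assuming $\hat M$ is connected, since the 1-skeleton has codimension two. After a further homotopy (keeping $\alpha$ transverse to the 2-skeleton) I would ensure that $\alpha$ visits each 3-simplex of $T$ at most once. The arc then determines a chain of distinct tetrahedra $\tau_0, \tau_1, \dots, \tau_k$ with $x \in \tau_0$, $x' \in \tau_k$ and $F_i := \tau_i \cap \tau_{i+1}$ a common 2-face; in particular $k \leq t - 1$.

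The core ingredient is a local claim: if $x_i$ is any interior point of $\tau_i$ avoiding the 2-skeleton and $x_{i+1}$ is the analogous point in $\tau_{i+1}$, then $P(\hat M, T, x_i)$ can be transformed into a triangulation combinatorially isomorphic to $P(\hat M, T, x_{i+1})$ by at most $100$ interior Pachner moves supported in a neighbourhood of $\tau_i \cup \tau_{i+1}$, followed by a homeomorphism of the same support that is isotopic to the identity rel $\partial \hat M$. Intuitively, the copy of $T_{S \times I}$ that replaces $\tau_i$ must be transported across $F_i$ into $\tau_{i+1}$. Starting from the standard $T_{S \times I}$ inside $\tau_i$, I would apply a $(2,3)$-move across $F_i$ combining $\tau_{i+1}$ with the unique tetrahedron of $T_{S \times I}$ adjacent to $F_i$ (the cone from the interior apex vertex of the associated boundary ball of $T_{S \times I}$), and then apply further $(2,3)$ and $(3,2)$ moves on the $\tau_i$ side to collapse the punctured region off $\tau_i$, restoring $\tau_i$ as a single 3-simplex and leaving the standard $T_{S \times I}$ structure inside $\tau_{i+1}$. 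Because $T_{S \times I}$ is a fixed triangulation of bounded size, only a universally bounded number of moves is required. The hard part will be the explicit case analysis, analogous to the one behind Lemma \ref{puncturingbound}, to verify that $100$ suffices and that the local move can always be realised irrespective of how $F_i$ sits in $T_{S \times I}$.

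Iterating the local step $k \leq t - 1$ times yields a sequence of at most $100 t$ interior Pachner moves transforming $P(\hat M, T, x)$ into a triangulation combinatorially isomorphic to $P(\hat M, T, x')$. The resulting homeomorphism $\Psi \colon \hat M - {\rm int}(N(x)) \to \hat M - {\rm int}(N(x'))$ is a concatenation of local homeomorphisms each isotopic to the identity rel $\partial \hat M$, so it extends to a homeomorphism of $\hat M$ which is itself isotopic to the identity; up to isotopy, $\Psi$ realises an ambient isotopy of $\hat M$ dragging $x$ along $\alpha$ to $x'$. Finally, $\Psi|_{\partial N(x)}$ is one of only finitely many simplicial isomorphisms between two copies of $\partial \Delta^3$; composing $\Psi$ with a self-homeomorphism of $\hat M - {\rm int}(N(x'))$ supported in a collar of $\partial N(x')$ and isotopic to the identity, I would arrange that the boundary restriction equals the prescribed $h$, completing the argument.
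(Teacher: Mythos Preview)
Your approach is essentially the same as the paper's: reduce to the adjacent-tetrahedra case, show a universal constant ($100$) suffices there, and chain at most $t$ such steps together. The paper phrases the reduction as a sequence of points $x = x_0, \dots, x_r = x'$ with $r \leq t$ rather than an explicit arc, and it builds the prescribed boundary identification $h$ into the choice of homeomorphism at the first step rather than correcting it at the end via a collar-supported adjustment, but these are cosmetic differences.
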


\begin{proof} Suppose that $x$ and $x'$ lie in distinct 3-simplices $\Delta$
and $\Delta'$ that share a face. Let $\Delta \cup \Delta'$ denote the
3-ball obtained by gluing $\Delta$ and $\Delta'$ along this face.
There may be other identifications on the boundary of this ball,
but we do not yet make these. The triangulations $P(\Delta,\Delta, x) \cup \Delta'$
and $\Delta \cup P(\Delta', \Delta', x')$ are triangulations
of $(\Delta \cup \Delta') - {\rm int}(N(x))$ and 
$(\Delta \cup \Delta') - {\rm int}(N(x'))$ respectively.
Pick a homeomorphism $\phi \colon (\Delta \cup \Delta') - {\rm int}(N(x))
\rightarrow (\Delta \cup \Delta') - {\rm int}(N(x'))$ that
restricts to $h$ on $\partial N(x)$, equals the identity on
$\partial (\Delta \cup \Delta')$ and extends to a homeomorphism
$(\Delta \cup \Delta') \rightarrow (\Delta \cup \Delta')$ that
is isotopic, relative to its boundary, to the identity.
Then, using $\phi$ to pull back the triangulation of $(\Delta \cup \Delta') - {\rm int}(N(x'))$, 
we obtain another triangulation of
$(\Delta \cup \Delta') - {\rm int}(N(x))$. It is clear that there is some number $k$
of interior Pachner moves, followed by an ambient
isotopy that equals the identity on $\partial (\Delta \cup \Delta') \cup \partial N(x)$ 
taking one to the other. One may easily check that
$k = 100$ suffices. Since only interior Pachner moves were used,
any further identifications on the boundary of $\Delta \cup \Delta'$
do not affect this argument.

Now suppose that $x$ and $x'$ are arbitrary points in $\hat M$ disjoint from the 2-skeleton.
There is a sequence of points
$x = x_0, \dots, x_r = x'$ of points disjoint from the 2-skeleton, where $r \leq t$,
and for each $i$, $x_i$ and $x_{i+1}$ lie in distinct 3-simplices that share
a face. Thus, at most $kr \leq 100t$ Pachner moves, followed by a
homeomorphism as described in the lemma, suffice to
take $P(\hat M,T,x)$ to $P(\hat M,T,x')$.
\end{proof}

\begin{proof}[Proof of Theorem \ref{2sphereboundary}]
Let $\hat M$ be the result of attaching a
3-ball to $M$ along $S$. Let $\hat T_1$ and $\hat T_2$ be the result of
removing the copy of $T_{S \times I}$ from $T_1$ and $T_2$,
and inserting a 3-simplex, giving triangulations $\phi_1 \colon \hat M \rightarrow |\hat T_1|$
and $\phi_2 \colon \hat M \rightarrow |\hat T_2|$.
Thus, $T_1 = P(\hat M, \hat T_1, y)$
and $T_2 = P(\hat M, \hat T_2, y)$, where $y$ is a point in the 
interior of the newly attached 3-ball.
By assumption, there is a sequence of
$N \leq P(t_1,t_2)$
interior Pachner moves
$$\hat T_1 = T^0 \rightsquigarrow T^1 \rightsquigarrow \dots
\rightsquigarrow T^N,$$
where $T^N$ is homeomorphism-equivalent to $\hat T_2$, 
via a homeomorphism $h \colon \hat M \rightarrow \hat M$
that is the identity on $\partial \hat M$.
For each relevant integer $i$, let $K^i$ be the $\Delta$-complex associated with
$T^i$, and let $h_i \colon |K^{i-1}| \rightarrow |K^i|$
be the homeomorphism resulting from the Pachner move. Let $x$ be a point in $\hat M$
disjoint from the inverse image of the 2-skeletons of every
$|K^i|$. By Lemma \ref{puncturingbound},
the triangulations $P(\hat M, \hat T_1,x)$ and $P(\hat M, T^N,x)$
differ by a sequence of at most $100N$ interior Pachner moves,
up to ambient isotopy fixed on $\partial \hat M \cup \partial N(x)$.
By Lemma \ref{basepointchange},
$P(\hat M, T^N,x)$ and $P(\hat M, T^N, h^{-1}(y))$ 
differ by a sequence of at most $100 t_2$ Pachner moves, followed by a
simplicial isomorphism that acts as the identity on
$\partial \hat M$. Now, $P(\hat M, T^N, h^{-1}(y))$ and $P(\hat M, \hat T_2, y)$ are
homeomorphism-equivalent, via a homeomorphism that is the identity on $\partial \hat M$. 
The composition of the above homeomorphisms
restricts to a simplicial isomorphism $\phi \colon \partial N(x) \rightarrow
\partial N(y)$. By Lemma \ref{basepointchange},
$T_1 = P(\hat M, \hat T_1, y)$ and $P(\hat M, \hat T_1, x)$ differ by a sequence of
at most $100t_1$ interior Pachner moves, followed by a
simplicial isomorphism that acts as the identity on
$\partial \hat M$. Moreover, we may ensure that the
induced map $\partial N(y) \rightarrow \partial N(x)$ equals
$\phi^{-1}$. Thus, we have related $T_1$ and $T_2$
by a sequence of at most $100 P(t_1,t_2) + 100 (t_1 + t_2)$
interior Pachner moves, followed by a homemorphism that acts
as the identity on the boundary of $M$.
\end{proof}

\begin{proof}[Proof of Theorem \ref{3balltriangulations}]
We are given triangulations $T_1$ and $T_2$ of the 3-ball, $B$.
The links $L_1$ and $L_2$ and their regular neighbourhoods
$N(L_1)$ and $N(L_2)$ are subcomplexes. Let $M = B - {\rm int}(N(L_1))$. 
Then $T_1$ restricts to a triangulation $T'_1$ for $M$. Now,
$L_1$ and $L_2$ are assumed to be equivalent, and so there is a
homeomorphism $h$ of $B$ taking $N(L_1)$ to $N(L_2)$. After an
isotopy, we may assume that the restriction of $h$ to $N(L_1)$
realises the given combinatorial isomorphism between $N(L_1)$ and $N(L_2)$.
Let $T'_2$ be the triangulation of $M$ obtained by transferring
the restriction of $T_2$ to $M$ via $h^{-1}$. Thus, we have two triangulations
of $M$, $T'_1$ and $T'_2$, and these restrict to equal triangulations
of $\partial M$. Note that the number of tetrahedra in
$T'_1$ and $T'_2$ is at most $t_1$ and $t_2$ respectively.

Let $\hat M$ be the result of attaching a 3-ball to the 2-sphere boundary
component of $M$. By assumption, $P \colon \mathbb{N} \times \mathbb{N} \rightarrow
 \mathbb{N}$ is a Pachner move function for $\hat M$. Applying Theorem \ref{2sphereboundary},
we obtain a sequence of at most $100 P(t_1,t_2) + 100 (t_1 + t_2)$
interior Pachner moves, followed by a homeomorphism that
is the identity on $\partial M$, taking $T'_1$ to $T'_2$.
This induces a sequence of at most this many interior Pachner moves, followed by
a combinatorial isomorphism that is the identity on $\partial B$, taking $T_1$ to $T_2$ and $N(L_1)$
to $N(L_2)$. This completes the proof of Theorem \ref{3balltriangulations}.
\end{proof}

\section{Constructing a triangulation from a link diagram}\label{sec:triangs3}

In this section, we show how a link diagram can be used to construct a triangulation of the link's exterior.

\begin{define} The \emph{standard triangulation} of a cube is
obtained as follows. Start by inserting a vertex into each face, and coning off each
face from this vertex. This gives a triangulation of the boundary of
the cube. Now place a vertex at the centre of the
cube, and cone off the triangulation of the boundary. See Figure \ref{fig:stancube}. 
\end{define}

\begin{figure}[h!]
\centering
\includegraphics{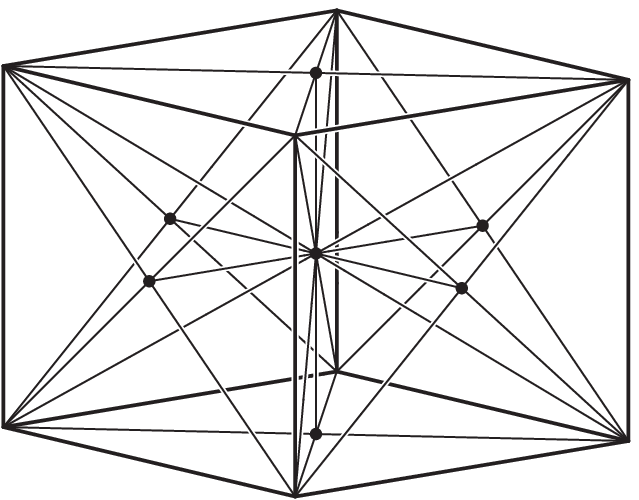}
\caption{} \label{fig:stancube}
\end{figure}

\begin{define} For any positive integer $n$, the \emph{standard triangulation
of the solid torus with length $n$} is obtained by gluing together $n$ cubes
in a circular fashion, with their standard triangulations, where the intersection
of a cube with its neighbours is precisely a pair of opposite faces of the
cube and where the other faces patch together to form four annuli. 
\end{define}

\begin{theorem} \label{trifromdiagram}
Let $L$ be a link in $S^3$ with components $L_1, \dots, L_r$. Let $D$ be a connected diagram of $L$ with $c(D)$ crossings. For $1 \leq i \leq r$,
let $c_i$ be the number of crossings in which at least one strand is part of $L_i$.
Let $n_1, \dots, n_r$ be integers satisfying $n_i \geq 10 c_i$ for each $i$,
and let $n = n_1 + \dots + n_r$.
Then there is a triangulation $T$ of a convex 3-ball in $\mathbb{R}^3$ with the following properties:
\begin{itemize}
\item it has at most $2^{12} c(D) + 120 (n+11) $ tetrahedra;
\item it contains $L$ as a subcomplex, and also a neighbourhood
$N(L) = N(L_1) \cup \dots \cup N(L_r)$ of $L$; 
\item the vertical projection of this copy of $L$ onto the horizontal plane
is the diagram $D$;
\item for each $i$, $N(L_i)$ has the standard triangulation of the solid torus
with length $n_i$;
\item each simplex of $T$ is straight in the affine structure on ${\mathbb R}^3$;
\item a collar neighbourhood of the boundary of the ball is triangulated
as $T_{S \times I}$.
\end{itemize}
\end{theorem}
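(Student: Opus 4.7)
The plan is to construct the triangulation of a convex 3-ball $B$ in $\mathbb{R}^3$ as a union of three explicit pieces: (i) the neighborhood $N(L) = \bigcup_i N(L_i)$ of the link, realised as a union of triangulated cubes glued circularly; (ii) a bounded-complexity \emph{crossing gadget} sitting in a small cube around each crossing of $D$, which handles the over/under structure and the local triangulation of the exterior near the crossing; (iii) a straight triangulation of the rest of $B$, with a collar near $\partial B$ that is a copy of $T_{S \times I}$. The data of $D$ and the integers $n_i$ determine everything up to small combinatorial choices.

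First I would choose an embedding $L \subset \mathbb{R}^3$ whose vertical projection is $D$, with each component $L_i$ lying within a small horizontal slab except for short arcs that dip above or below the slab near each crossing. I partition $L_i$ into $n_i$ arcs: reserve a small fixed budget of arcs (say four) to lie inside the cube around each crossing on that strand, which uses at most $4c_i \le n_i$ of the $n_i$ arcs because $n_i \geq 10 c_i$; distribute the remaining arcs along the arcs of $D$ between crossings. Thicken each of these $n_i$ arcs to a small straight cube so that consecutive cubes meet in a common square face, obtaining $N(L_i)$ as a circular chain of $n_i$ cubes. Triangulating each cube with the standard triangulation of Definition \ref{fig:stancube} gives the standard triangulation of the solid torus of length $n_i$, and each cube contains a single straight edge of $L$ as an axis running between the centres of the two gluing faces. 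This step uses at most $C_1 n$ tetrahedra in total, where $C_1$ is an absolute constant, and each simplex is straight.

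Around each crossing I place a fixed-size \emph{crossing cube} $C_x$ in $\mathbb{R}^3$, vertically aligned with the crossing, that contains the four arc cubes of $N(L)$ passing through it (two on the overstrand, two on the understrand) glued to the standard triangulation across their shared faces with the adjacent arc cubes. By fixing one combinatorial model for each of the two crossing types (over and under), the interior of $C_x$ minus these four arc cubes can be triangulated with straight simplices by a uniform bounded number of tetrahedra; the constant $2^{12}$ in the theorem gives enormous slack and is easily met. To build the exterior triangulation, I enclose $N(L)$ and all the $C_x$ in a large rectangular box $B_0 \subset \mathbb{R}^3$, extend the side-face triangulation of every non-crossing arc cube to a bounded family of surrounding straight tetrahedra that fill the complement of $N(L) \cup \bigcup_x C_x$ in a slightly enlarged prism around the horizontal projection of $D$, and finally cone off the remaining polyhedral region between this prism and $\partial B_0$ from a constant number of additional vertices. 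This contributes at most $C_2$ tetrahedra per arc segment and per crossing. Then I insert a thin collar $\partial B_0 \times I$ just inside $\partial B_0$ triangulated as $T_{S \times I}$; this absorbs the bounded-complexity outer boundary of $B$ into the desired standard form and contributes a fixed additive constant (bounded by $120 \times 11$).

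The main obstacle is the compatibility between the crossing gadgets and the surrounding triangulation: at each crossing cube one must certify that (a) the triangulations of the four outer faces of $C_x$ match the triangulations inherited from the adjacent arc cubes and from the exterior triangulation of the slab, and (b) one can fill the interior of $C_x$ minus the four thin crossing arc cubes using straight tetrahedra while preserving the over/under information. I would handle this by fixing once and for all two universal combinatorial models (one per crossing type) with matched face patterns, so the check reduces to a finite verification independent of $D$. The tetrahedron count is then $C_1 n + C_2 n + C_3 c(D) + C_4$; choosing the absolute constants with room to spare gives the bound $2^{12} c(D) + 120(n+11)$ stated in the theorem, and all five bulleted properties hold by construction.
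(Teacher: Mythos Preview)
Your approach is genuinely different from the paper's, and it has a real gap in the step where you fill the complement of $N(L)\cup\bigcup_x C_x$ inside the box $B_0$. You write that you will ``extend the side-face triangulation of every non-crossing arc cube to a bounded family of surrounding straight tetrahedra'' and then ``cone off the remaining polyhedral region \dots\ from a constant number of additional vertices,'' but neither of these moves is justified. The complement of the tube-and-crossing pieces is a single connected region whose boundary carries $O(n)$ faces, and the complementary regions of the diagram in the plane can have arbitrarily many sides; you cannot produce a straight triangulation of this region by local work around each arc cube, because adjacent arc cubes share global complementary regions. Nor is the shell between your ``prism'' and $\partial B_0$ star-shaped in general, so coning from a constant number of points does not fill it with straight simplices. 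A second, smaller gap: you never say how the diagram $D$ is drawn with straight pieces in the plane, yet your arc cubes can only be affine parallelepipeds meeting in common quadrilateral faces if the underlying arcs of $L$ are straight segments---this needs something like F\'ary's theorem, which you do not invoke.

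The paper proceeds in the opposite direction and thereby sidesteps exactly this difficulty. It first uses F\'ary's theorem to draw the underlying $4$-valent graph $G$ of $D$ with straight edges in a square $Q$, then thickens $G$ to a graph $G_+$ by taking four parallel copies of each edge and a $3\times 3$ grid of parallelograms at each crossing. It triangulates \emph{all} of $Q$ (not just a neighbourhood of the diagram) using $G_+$ as scaffolding, with an Euler-characteristic count bounding the number of pieces. Four parallel copies of this triangulated square, plus vertical walls $E\times I$, then cut the cube $Q\times I$ into convex cells; coning each face and then each cell automatically gives the standard cube triangulation on the hexahedral cells surrounding the link arcs. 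Only after this global triangulation exists does the paper locally replace the $3\times 3\times 3$ block at each crossing by a fixed gadget, and finally subdivide each tube to adjust the cube count from $n_i'\le 10c_i$ to the required $n_i$. In short, the paper builds the ambient triangulation first and discovers $N(L)$ inside it, whereas you try to build $N(L)$ first and fill in the ambient space afterwards; the former makes the straightness and the linear tetrahedron bound automatic, while the latter leaves the hardest step unproved.
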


\begin{proof} \noindent {\bf Step 1.} First suppose $D$ is not the trivial unknot diagram. Let $G$ be the underlying 4-valent planar graph of $D$.
We start by embedding $G$ in a square $Q$ in $\mathbb{R}^2$ so that each edge of $G$ is a union of at most 3 straight arcs. To find such an embedding, we first collapse parallel edges of $G$ to a single edge and remove edge loops, forming a graph $\overline G$. Using F\'ary's Theorem we may find an embedding of $\overline G$ in the plane in which every edge is straight. Now reinstate the parallel edges of $G$ with 2 straight arcs each and the edge loops of $G$ with 3 straight arcs. If $D$ is the trivial unknot diagram then let $G$ be a triangle formed of three straight edges in the interior of $Q$.

\vskip 6pt

\noindent {\bf Step 2.} Replace each edge of $G$ by 4 parallel edges.
Replace each 2-valent vertex of $G$ by 4 vertices joined by 3
straight edges. Replace each 4-valent vertex of $G$ by 9 parallelograms. See Figure \ref{fig:para9}. Call the resulting graph $G_+$.

\begin{figure}[h]
\centering
\psfrag{a}[Bl][Bl]{$\alpha$}
\includegraphics{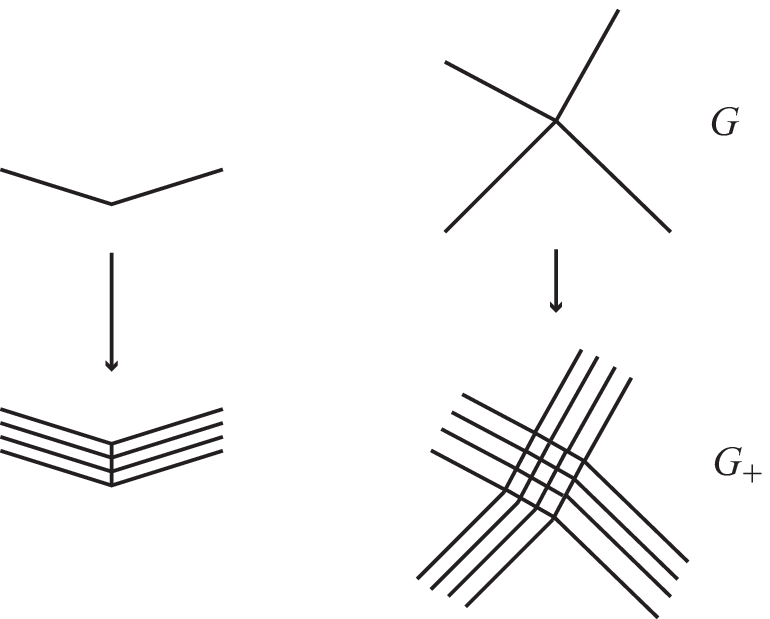}
\caption{} \label{fig:para9}
\end{figure}

\vskip 6pt

\noindent {\bf Step 3.} We will use $G_+$ to triangulate $Q$ as follows. Into each complementary region of $G_+$ coming from the complementary regions of $G$ we add straight edges until the region is triangulated. Denote by $E$ the union of these edges together with $G_+$ and $\partial Q$. An elementary Euler characteristic argument shows that $E$ decomposes $Q$ into at most $\textrm{max}(10c(D)+2,8) \leq 10c(D)+8$ triangles and at most $\textrm{max}(27c(D),9) \leq 27c(D)+9$ convex quadrilaterals.  Into the quadrilateral shaped regions add a vertex and cone from this vertex.  The result is a triangulation of the square $Q$ which we denote by $T_G$.

\vskip 6pt

\noindent {\bf Step 4.} Insert 4 copies of $Q$ with its triangulation $T_G$
into the cube $Q \times I$, one being the
top face, one the bottom face, and two parallel copies between them.
Insert a copy of $E \times I$, lying vertically in the cube,
running from top to bottom. The union of these copies of $Q$ with $E \times I$
decomposes the cube into a collection of convex balls. Triangulate each of the vertical faces by inserting
a vertex into the centre of the face, and coning off. Insert a vertex
into each ball and cone off.

The result is a triangulation of the cube. It has at most $2364 c(D) + 984$ tetrahedra.
However, it does not yet have all the required properties.

\vskip 6pt

\noindent {\bf Step 5.} We have triangulated the cube $Q \times I$, but we
actually require a triangulation of a ball so that a collar neighbourhood of
its boundary has the standard triangulation $T_{S \times I}$. Place a
large copy of $T_{S \times I}$ around $Q \times I$. We need to
triangulate the space between them. Do this by adding a cone over
each face of $Q \times I$, the cone point being a vertex of $T_{S \times I}$.
Then triangulate the remaining space. This certainly adds at most $300 + 236 c(D)$ tetrahedra
to the triangulation. 

\vskip 6pt

\noindent {\bf Step 6.} Near each 4-valent vertex of $G$, there are
9 parallelograms which are complementary regions of $G_+$. A copy of these 9
parallelograms lies in each of the 4 copies of $Q$,
and between these lies a parallelepiped made out of 27 smaller parallelepipeds which together look like a
Rubik's cube. We remove the entirety of this parallelepiped, and replace it with
a fixed triangulation of the parallelepiped with the same boundary.
This triangulation contains two pairs of parallelepipeds, glued
end-to-end, which together form two thickened arcs that
realise the crossing of $L$. We call these parallelepipeds \emph{crossing parallelepipeds}. See Figure \ref{fig:crosstri}.
We triangulate each of these crossing parallelepipeds using the standard triangulation of a cube.
We triangulate the rest of the large parallelepipeds using less than
$300$ tetrahedra each,
with their boundaries remaining unchanged from the start of this step.

\begin{figure}[h]
\centering
\includegraphics[width=0.4\textwidth]{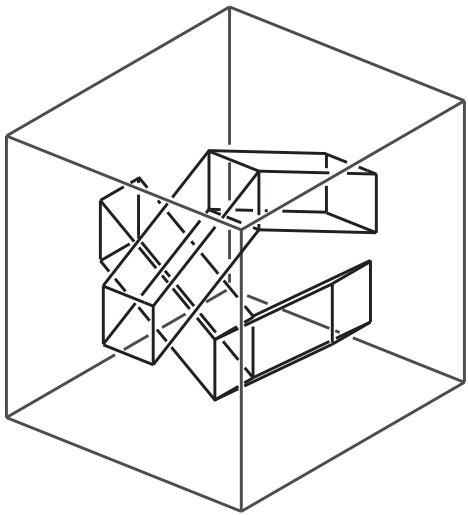}
\caption{Crossing parallelepipeds} \label{fig:crosstri}
\end{figure}

\vskip 6pt

\noindent {\bf Step 7.}
This new triangulation certainly has at most $2^{12}c(D) + 1284$ tetrahedra. It has all the required properties, with one
exception. Each component $N(L_i)$ of $N(L)$ is a union of cubes,
but the number of cubes is $n'_i$, say, where $n'_i \leq 10c_i$.

Thus, we remove each cube of $N(L_i)$, and replace it with the
following triangulation. Four of its outer faces have the
same triangulation as in the standard case. However, the faces $F$
that are attached to other cubes of $N(L_i)$ are given the
following triangulation. It has a central square, coned off.
There is an edge running from each vertex of this square to
the corresponding vertex of $F$. This creates 4 trapeziums.
A vertex is inserted into each, and then we cone off.
This specifies the boundary of each of these new cubes. See Figure \ref{fig:cubecube}.

\begin{figure}[h]
\centering
\psfrag{q}[Bc][Bl]{$\textrm{A portion of the boundary triangulation}$}
\psfrag{w}[Bc][Bl]{$\textrm{The interior cubes}$}

\includegraphics[width=0.8\textwidth]{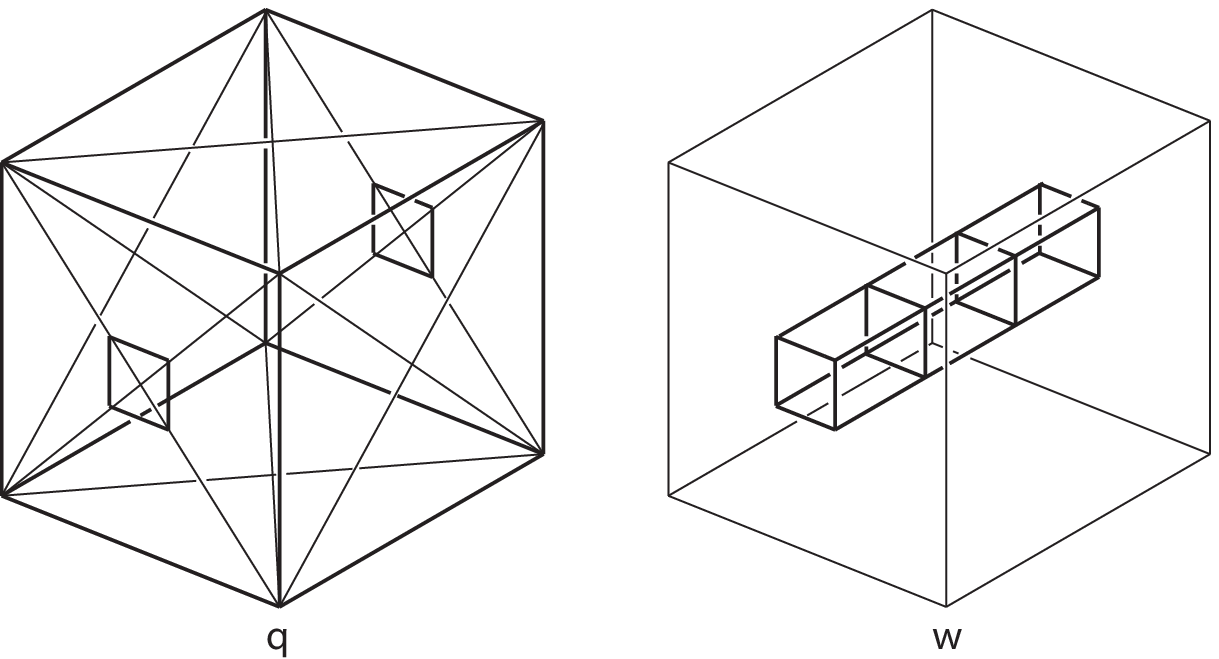}
\caption{} \label{fig:cubecube}
\end{figure}

Between opposite faces containing the new smaller squares,
we insert $\lfloor n_i / n'_i \rfloor$ or $\lceil n_i/n'_i \rceil$
standard cubes. The union of these cubes will form the new
$N(L)$. Between the boundary of the new $N(L)$ and the old
$N(L)$, we insert 2-simplices and 3-simplices so that the resulting triangulation has all the required properties. The number of tetrahedra lying within the old $N(L)$ is at most
$120 n$. \end{proof}

\pagebreak

\section{Alexander's trick and bounded isotopies} \label{sec:alexander}

In this section we prove the following result.

\begin{theorem} \label{alexandertrick}
 Let $B$ be a convex 3-ball in
$\mathbb{R}^3$. Let $h \colon B \rightarrow B$ be a
homeomorphism that fixes $\partial B$ pointwise and sends each straight arc in $B$
to a concatenation of at most $m$ straight arcs. Then there is an ambient isotopy $h_t \colon B \rightarrow B$ for
$t \in [0,1]$ such that $h_0$ is the identity map on $B$, $h_1=h$ and $h_t$
sends each straight arc to a concatenation of at most $m+2$
straight arcs in $\mathbb{R}^3$ for all $t \in [0,1]$.
\end{theorem}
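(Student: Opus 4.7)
The plan is to adapt the classical Alexander trick so that the shrinking sub-ball on which the modification is concentrated is \emph{convex}, and hence interacts with any straight arc in a controlled way. Choose an interior point $p \in B$ and, for $t \in (0,1]$, let $\phi_t \colon \mathbb{R}^3 \to \mathbb{R}^3$ be the affine similarity $\phi_t(x) = p + t(x-p)$. Since $B$ is convex and contains $p$, the set $B_t := \phi_t(B)$ is a convex subset of $B$ that equals $B$ when $t = 1$ and shrinks to $\{p\}$ as $t \to 0$. Define
\[
 h_t(x) = \begin{cases} \phi_t \circ h \circ \phi_t^{-1}(x), & x \in B_t, \\ x, & x \in B \setminus B_t, \end{cases}
\]
and set $h_0 := \mathrm{id}_B$. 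Because $h$ fixes $\partial B$ and $\phi_t^{-1}(\partial B_t) = \partial B$, the two formulas agree on $\partial B_t$, so each $h_t$ is a homeomorphism of $B$ fixing $\partial B$. Joint continuity of $(x,t) \mapsto h_t(x)$ on $B \times [0,1]$ is easy: for any $x \neq p$, $h_t(x) = x$ once $t$ is small enough that $x \notin B_t$, while $h_t(p) = p + t(h(p)-p) \to p$ as $t \to 0$. Thus $\{h_t\}$ is an ambient isotopy with $h_0 = \mathrm{id}$ and $h_1 = h$.

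The key step is counting segments in $h_t(\alpha)$ for a straight arc $\alpha \subset B$. Since $B_t$ is convex, a straight line meets $\partial B_t$ in at most two points, so $\alpha \cap B_t$ is either empty, all of $\alpha$, or a single proper sub-arc; in particular $\alpha \setminus \mathrm{int}(B_t)$ consists of at most two straight arcs on which $h_t$ acts as the identity. On $\alpha \cap B_t$ the map $h_t = \phi_t \circ h \circ \phi_t^{-1}$ is the composition of two affine maps (which preserve straightness and segment counts) with $h$ (which sends a straight arc to a concatenation of at most $m$ straight arcs by hypothesis). Concatenating along the at most two points of $\alpha \cap \partial B_t$ yields at most $m+2$ straight arcs in total.

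The main obstacle is handling the interface at $\partial B_t$ correctly, both for continuity of $h_t$ and for the arc count; this is exactly what forces the construction to use only affine maps and a convex shrinking region, so that every straight arc enters and leaves $B_t$ at most once. The remaining verifications reduce to elementary properties of convex sets and affine maps.
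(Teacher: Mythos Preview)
Your proof is correct and is essentially identical to the paper's own argument: the paper also fixes an interior point $p$, scales $B$ by the affine maps $s_r(x)=r(x-p)+p$, defines $h_t$ to be $s_t\circ h\circ s_t^{-1}$ on $B_t=s_t(B)$ and the identity elsewhere, and counts segments in exactly the way you do. Your explicit remarks on convexity of $B_t$ and on continuity at $t=0$ are slightly more detailed than the paper's, but the construction and the $m+2$ count match line for line.
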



\begin{proof} We will use Alexander's trick. Let $p$ be a point in the
interior of $B$. For each $r\in [0,1]$, let $B_r$ denote the result of
linearly scaling $B$ with centre $p$ by a factor of $r$, so that $B_1=B$ and
$B_0=\{ p \}$. More precisely, for $r>0$ let $B_r$ be the image of $B$
under the map $s_r:x \mapsto r(x-p)+p$.


Let $g_r:B_r \rightarrow B_r$ be given by $$g_r(x) =  s_r \circ h\circ
s_r^{-1}(x)$$ for $r>0$. The map $g_r$ essentially applies the homeomorphism
$h$ to the dilated copy of $B$, namely $B_r$, as shown in Figure
\ref{atrickgr}.

\begin{figure}[h]
\centering
\includegraphics{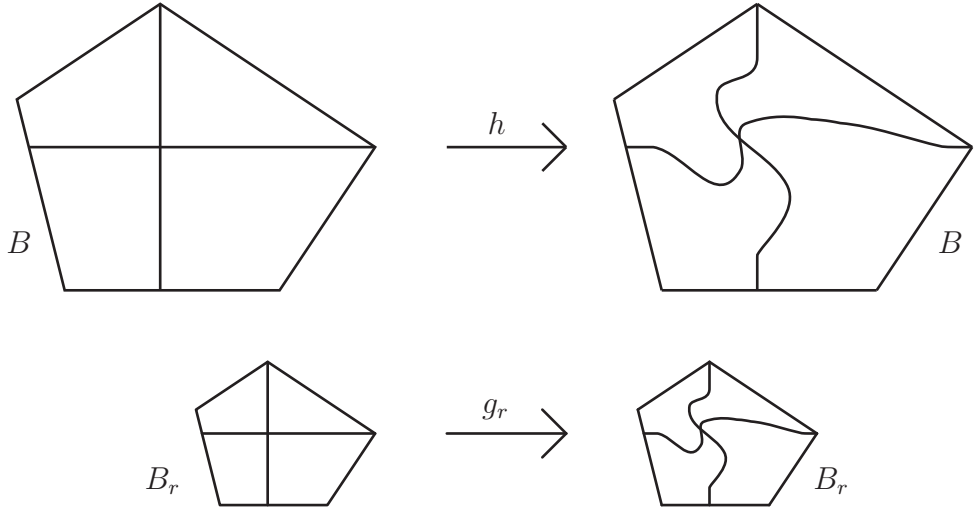}
\caption{The map $g_r$} \label{atrickgr}
\end{figure}

Alexander's trick may now be described as follows. Start with the identity
map on $B$ and set this to be $h_0$. As $t$ increases $h_t$ acts by applying
$g_t$ to $B_t$ and leaving the rest of $B$ unchanged, as shown in Figure
\ref{atrick}. When $t$ reaches 1, $h_1$ is the same as $h$ because $B_1 = B$
and $g_1 =h$.

\begin{figure}[h]
\centering
\includegraphics[width=100mm]{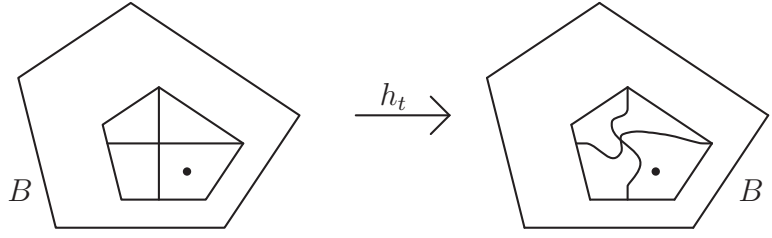}
\caption{The map $h_t$} \label{atrick}
\end{figure}

Formally, $h_t$ is given by

\begin{displaymath}
h_t(x) = \left \{ \begin{array}{ll}
g_t(x) & x \in B_t\\
x & x \notin B_t
\end{array}  \right. .
\end{displaymath}

Now, a straight arc in $B$ will be sent under $h_t$
to a single straight arc if it lies entirely outside $B_t$. It will be sent
to at most $m$ straight arcs if it lies entirely within $B_t$. If it has one
or both endpoints outside of $B_t$ but part of its interior within $B_t$
then it will be sent to at most $m+1$ or $m+2$ straight arcs respectively.
\end{proof}

\section{Continuous families of link projections}

In order to prove Theorem \ref{maintheorem}, we will want to find a sequence of
diagrams interpolating between two given diagrams for a link. However, in the course of the proof,
we will obtain not a sequence of diagrams, but a continuous family of
link projections. In this section, we will show how such a family can be used
to produce a sequence of diagrams.

We say that a piecewise linear map from a disjoint union of circles $C$ to the plane
is a \emph{link projection} if it can be factorised as $C \rightarrow \mathbb{R}^3 \rightarrow \mathbb{R}^2$,
where $C \rightarrow \mathbb{R}^3$ is an embedding and $\mathbb{R}^3 \rightarrow \mathbb{R}^2$
is the standard vertical projection onto the first two co-ordinates. In a link projection,
one keeps track not just of the map $C \rightarrow \mathbb{R}^2$, also one records,
for any two points in $C$ with the same image in $\mathbb{R}^2$, their relative heights
in $\mathbb{R}^3$. Link projections need not form diagrams in the usual sense.
For example, more than two points in $C$ may map to the same point in $\mathbb{R}^2$;
indeed uncountably many points may have the same image. However,
a link projection induces a \emph{diagram} if only finitely many points
in the plane have more than one inverse image point in the circles; each such point has
precisely two inverse image points; and near each such point in the plane, the
image of the link consists of two arcs intersecting transversely.

\begin{define}Let $D$ be a link diagram. Suppose that $D'$ is obtained from $D$ by adding a small unknot summand at a point in the interior of an edge of $D$. Then we say that $D'$ is obtained from $D$ by \emph{adding an unknot summand}. We say that  $D$ is obtained from $D'$ by \emph{removing an unknot summand}. 
\end{define}

\begin{proposition} \label{ctsfamily}
 Let $H \colon B \times [0,1] \rightarrow B$ be a piecewise
linear isotopy of a convex polyhedral 3-ball $B$ in $\mathbb{R}^3$. For each $t \in [0,1]$, let $h_t \colon B \rightarrow B$ be
$H(\cdot ,t)$. Suppose that $h_0$ is the identity.
Let $L$ be a piecewise linear link in the interior of $B$.
Suppose that, for each $t \in [0,1]$, $h_t(L)$ consists of
at most $n$ straight arcs. Suppose also that the projections of $h_0(L)$ and 
$h_1(L)$ are diagrams. Then, there is a sequence of diagrams
relating the projections of $h_0(L)$ and $h_1(L)$ with the following properties:
\begin{itemize}
\item successive diagrams are related by either a single Reidemeister move or the addition or removal of an unknot summand;
\item each diagram in the sequence has at most $n^2$ crossings.
\end{itemize}
\end{proposition}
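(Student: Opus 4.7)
The plan is to analyze the combinatorial type of the projection $\pi(h_t(L))$ as $t$ varies and identify the finitely many critical times at which it changes, then verify that each such change is realised by one of the allowed moves.

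Since $H$ is PL, each vertex of $h_t(L)$ traces out a PL path in $B$ as $t$ varies, so after subdividing $[0,1]$ into finitely many subintervals I may assume that on each subinterval every vertex of $h_t(L)$ moves along a straight line at constant speed. On such a subinterval, each edge of $h_t(L)$ is a straight segment whose endpoints depend linearly on $t$. After a generic PL perturbation of $H$ that does not change $h_0$, $h_1$, or the $n$-arc bound, the set of $t$ at which $\pi(h_t(L))$ fails to be a diagram is a finite collection of isolated ``bad times'' $0 < t_1 < \cdots < t_N < 1$, at each of which exactly one of the following codimension-one events occurs:
\begin{itemize}
\item[(a)] a vertex of $h_{t_i}(L)$ projects to the interior of a non-adjacent edge;
\item[(b)] two non-adjacent vertices of $h_{t_i}(L)$ have the same projection;
\item[(c)] three pairwise non-adjacent edges of $h_{t_i}(L)$ share a common projected interior point;
\item[(d)] an edge of $h_{t_i}(L)$ becomes vertical, so projects to a single point.
\end{itemize}
The remaining candidate event, two distinct points of $h_{t_i}(L)$ having the same projection \emph{and} the same height, is ruled out because $h_{t_i}$ is an embedding.

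The heart of the argument is a local case analysis showing that each event changes the diagram only by an ambient isotopy of $\RR^2$, a single Reidemeister move, or the addition/removal of a single unknot summand. Event (c) is exactly the configuration of a Reidemeister III move. Event (a) is a Reidemeister II move when the vertex is pushed across the edge so that both adjacent edges simultaneously gain or lose a crossing with the traversed edge, and is an ambient isotopy otherwise (the crossing simply transfers between the two edges adjacent to the vertex). Event (b) is an ambient isotopy. Event (d) is the subtle one: depending on the local link structure near the vertical edge, it is either a Reidemeister I move (if a small kink containing one self-crossing collapses through the vertical edge) or the addition/removal of an unknot summand (if a small loop with no crossings collapses through it). Sampling a generic time $s_i$ in each open interval $(t_i,t_{i+1})$ and setting $s_0=0$, $s_N=1$ yields a sequence of diagrams $\pi(h_{s_0}(L)),\dots,\pi(h_{s_N}(L))$ in which consecutive diagrams differ by at most one Reidemeister move or one unknot-summand operation. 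The crossings bound is then immediate: at any generic $t$, $\pi(h_t(L))$ has at most $n$ straight segments in the plane, any two of which meet in at most one interior point, so each diagram has at most $\binom{n}{2}\le n^2$ crossings.

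The main obstacle will be the local analysis for event (d): carefully enumerating the PL configurations in a spatiotemporal neighbourhood of an edge as it passes through the vertical direction, and confirming in each case that the projections just before and just after differ by an R1 move or the addition/removal of an unknot summand. This requires splitting into cases by how many other strands pass through the shrinking region of the projection, and tracking the over/under data through the degeneration. A secondary technical point is arranging the generic perturbation of $H$ so that the bad events are isolated and occur one at a time without increasing the straight-arc count of $h_t(L)$.
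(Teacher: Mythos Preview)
Your outline has a genuine gap, and it stems from a misidentification of where the ``unknot summand'' operations come from.

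You correctly subdivide $[0,1]$ so that on each open subinterval the vertices of $h_t(L)$ move linearly. But you never say what happens \emph{at the subdivision points themselves}. Because $H$ is only PL, the number of straight arcs of $h_t(L)$ can change at these times: a single vertex can split into several new vertices (with new short edges between them), or several vertices can coalesce into one. This is exactly the mechanism that forces the unknot-summand moves in the paper's argument: just after a vertex $v$ explodes, the diagram near $\pi(v)$ acquires a tiny, possibly complicated, arc which after perturbation is a small unknotted summand; just before a coalescence the reverse happens. Your list of events (a)--(d) lives entirely inside the open subintervals and therefore misses this phenomenon completely.

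Your attempt to account for unknot summands via event (d) is in fact empty. For a single edge with linearly moving endpoints $p_1(t),p_2(t)$, the condition ``$e$ is vertical'' is $\pi(p_1(t))=\pi(p_2(t))$, which is two equations in the one parameter $t$; after a generic perturbation it has no solutions. The same remark applies to your event (b). So neither (b) nor (d) occurs in a generic one-parameter family, and (d) certainly is not the source of the unknot-summand operation. (Relatedly, the Reidemeister~I event you are missing from your list is not ``an edge becomes vertical'' but rather ``the projection of an edge becomes collinear with, and lies inside, the projection of an adjacent edge''; this is codimension one and is what creates or destroys a kink.)

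To repair the argument you must do what the paper does: treat the subdivision times $t_i$ separately, perturb so that each $D_{t_i}$ is itself a diagram, and then argue that for small $\varepsilon$ the diagrams $D_{t_i\pm\varepsilon}$ differ from $D_{t_i}$ only by the appearance or disappearance of a small unknot summand near the vertex whose combinatorics changed. Between $t_i+\varepsilon$ and $t_{i+1}-\varepsilon$ you then have a fixed number of linearly moving vertices, and the Reidemeister analysis (with the correct codimension-one events) goes through.
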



Proving Proposition \ref{ctsfamily} is a fairly routine exercise in general position. Because of this, we only give an outline. 

Since $H$ is piecewise linear, $B \times [0,1]$ has a triangulation, $T$, such that $H$ restricts to a linear map on every simplex of $T$. Now, $L \times [0,1]$ intersects  every simplex of $T$ in a collection of affine pieces which we collectively call $A$. These affine pieces form a cell structure on the annulus $L \times [0,1]$. Let $t_1,t_2\ldots$ be the $t$-coordinates of the $0$-cells of $A$, arranged in increasing order. For every $i$, if $t \in (t_i, t_{i+1})$ then $L \times \{t\}$, consists of a fixed number of straight pieces, this number depending only on $i$. 

Let $D_t$ be the projection of the link $h_t(L)$ to the plane.
By the discussion above, on each interval $(t_i, t_{i+1})$, the projections $D_t$ vary in a fashion determined by the motion of the vertices of $H(L \times \{t\})$, with the edges in between remaining affine throughout. 

When $t=t_i$, the projections $D_t$ change in a way that is more complicated. Edges of $D_t$ can shrink to length zero as $t$ increases to $t_i$, and new edges and vertices can appear where none existed before when $t$ increases from $t_i$. Thus it is at these values of $t$ that we may need to remove and then add unknot summands.

In order to complete the proof of Proposition \ref{ctsfamily} we need to arrange that $D_t$ is a diagram for every $t$ apart from finitely many values of $t \in (0,1)$, at each of which a single Reidemeister move takes place or an unknot summand is removed and then another added. This we achieve by perturbing $H$.

First perturb $H$ so that each 0-cell in the interior of $A$ has a different
$t$-coordinate. Then $0 = t_0, t_1, \ldots , t_N = 1$ are the $t$-coordinates of the
0-cells of $A$. Now introduce two new vertices to each 1-cell in the interior of  $A$. If the
endpoints of the 1-cell have times $t_i$ and $t_j$, where $t_i < t_j$, then
place the vertices at $t_i + \varepsilon$ and $t_j - \varepsilon$, for some small
$\varepsilon > 0$. From now on, all perturbations of $H$ will be achieved by
slightly moving the location in $B$ of the images of the 0-cells of $A$.


The remainder of the proof of Proposition \ref{ctsfamily} will be in several steps. The first step will be to ensure that each $D_{t_i}$ is a diagram. Next we will ensure that for each $i$, $D_{t_i\pm \varepsilon}$ is, for  small enough  $\varepsilon$, a diagram related to $D_{t_i}$ by the addition or removal of an unknot summand. The third and final step will be to arrange  that the link projections between $D_{t_i + \varepsilon}$ and $D_{t_{i+1} - \varepsilon}$ are diagrams for all but finitely many times, $t$, at which a single Reidemeister move takes place. 

\textbf{Step 1 - Ensuring each $D_{t_i}$ is a diagram:} We need to perturb $H$ to ensure the following:
\begin{itemize}
\item any two edges of the projection $D_{t_i}$ intersect in at most one point;
\item no three edges of $D_{t_i}$ have a common point of intersection;
\item no vertex of the link has image that lies in a non-adjacent edge of $D_{t_i}$.
\end{itemize}
This is a straightforward general position argument. Suppose that two non-adjacent
edges intersect in more than one point. The images of their four endpoints
in $B$ lie in a 12-dimensional vector space. The subspace
consisting of configurations where the two lines intersect in more than one point
has dimension 10, for the following reason. The first line is specified by 6 co-ordinates.
The remaining line has projection that overlaps with the projection of
the first line, and so there are 4 co-ordinates that specify this line.
Since 10 is less than 12, we may perturb $h_{t_i}$ to avoid this subspace. Perturb $H$ to realize this perturbation of $h_{t_i}$. We can make this perturbation sufficiently small so that  no other bad configurations arise.
We can deal with the other cases similarly, and thereby ensure that
each $D_{t_i}$ is a link diagram. Note that the case where an edge of the link is projected to a single point is ruled out by the third condition.

\textbf{Step 2 - Perturbing each $D_{t_i\pm \varepsilon}$:} As $t$ increases from $t_i \in (0,1)$, the diagram  $D_{t_i}$ changes in the following way. All but one of the vertices of $D_{t_i}$ move with constant velocity. From the remaining vertex, $v$ say, a collection of vertices are formed and these all move away from $v$ with constant velocity. The velocity of these new vertices are the projected velocities of new vertices in $h_t(L)$ moving away from the pre-image of $v$. Because $D_{t_i}$ is a diagram, for $\varepsilon$ small enough, $D_{t_i + \varepsilon}$ is a diagram apart from possibly near $v$. To ensure that $D_{t_i + \varepsilon}$ is a diagram perturb it in a similar fashion to Step 1. All the diagrams $D_{t_i + \delta}$ change, as $\delta$ varies in $(0,\varepsilon]$, in a linear fashion with their vertices moving with constant velocity. Thus as $t$ increases from $t_i$ we have arranged that near $v$ we see a small unknot summand appear and grow in a linear fashion. A similar argument applies at times immediately prior to each $t_i \in (0,1)$. At times $t = 0$ and $t = 1$ things are only slightly different, with possibly several unknot summands appearing immediately after $t=0$ and possibly several unknot summands disappearing as $t$ reaches $1$.

\textbf{Step 3 - Passing from $D_{t_i + \varepsilon}$ to $D_{t_{(i+1)} - \varepsilon}$ with Reidemeister moves:} So far we have ensured that we may pass from the projection of $h_0(L)$ to the
projection of $h_1(L)$ by means of a sequence of diagrams where consecutive diagrams in the sequence are related by the addition or removal of a small unknot summand, or by moving their vertices with constant velocity, keeping the joining edges straight throughout. We ensure that the latter 1-parameter families of link projections may be perturbed to give rise to sequences of Reidemeister moves with the following proposition:

%
%
%

%
%
%
%
%
%
%
%

\begin{lemma} \label{lem:generalposition}
We may perturb each $D_{t_i \pm \varepsilon}$ slightly so that they remain diagrams,
and so that $D_t$ fails to be a diagram
for only finitely many values of $t$ in each  $[t_i+\varepsilon,t_{i+1}-\varepsilon]$. Moreover, at these values of $t$, a single Reidemeister
move is performed.
\end{lemma}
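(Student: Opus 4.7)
The plan is to work within each fixed combinatorial interval $[t_i+\varepsilon, t_{i+1}-\varepsilon]$ where $h_t(L)$ has a constant number of straight edges and every vertex of $h_t(L)$ moves with constant velocity in $\mathbb{R}^3$. The strategy is to catalogue the finitely many ways in which $D_t$ can fail to be a diagram, observe that each is cut out by algebraic equations in the coordinates of the vertices, and then perturb slightly so that at most one such failure occurs at a time and each failure realises a Reidemeister move.

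First I would enumerate the failure modes for $D_t$: (a) two non-adjacent edges of $h_t(L)$ have projections meeting tangentially (or in more than one point); (b) three edges of $h_t(L)$ project to a common triple point; (c) a vertex of $h_t(L)$ projects onto a non-adjacent edge; (d) an edge projects to a single point (i.e.\ is vertical); (e) two non-adjacent edges share a projected crossing whose two preimage heights coincide. Each of these is a polynomial condition in the coordinates of the vertices of $h_t(L)$. Since those vertices move with constant velocity, each failure mode restricted to a fixed tuple of edges (and a fixed mode) defines a polynomial equation in the single parameter $t$. Unless the polynomial is identically zero, its zero set in the compact interval $[t_i+\varepsilon,t_{i+1}-\varepsilon]$ is finite, and since there are only finitely many tuples of edges to consider, the total bad set is finite.

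Next I would rule out the identically-zero polynomials and the simultaneous failures by a small perturbation of the endpoint configurations. Moving the image in $B$ of each interior $0$-cell of $A$ by a small amount (which keeps $H$ piecewise linear and keeps the diagrams $D_{t_i+\varepsilon}$ and $D_{t_{i+1}-\varepsilon}$ diagrams by Step~1-style arguments) alters the coefficients of every polynomial above. A standard dimension count shows that the locus in perturbation space on which any given polynomial is identically zero, or on which two distinct failures occur at the same $t$, or on which a failure of higher codimension (four concurrent edges, a triple point with tangency, a vertex on a double point, etc.) occurs, has positive codimension. Taking the perturbation off this finite union of bad loci achieves: each failure mode active at some $t$ is active at only finitely many times; distinct failures occur at distinct times; and every failure involves the minimum number of strands in the most transverse possible configuration.

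Finally I would verify that each isolated, non-degenerate failure is exactly a Reidemeister move. A transverse type (a) event between two non-adjacent edges is an R2 (two strands crossing), or an R1 when the two edges belong to the same arc meeting itself to bound a small bigon. A transverse type (b) event is an R3. A type (e) event at an otherwise-transverse crossing is a local switch of over/under strand, which can be expressed as a sequence of R2 moves on an inserted small loop, or (after the preceding perturbation) avoided altogether by tilting the relevant edge slightly out of horizontal. Types (c) and (d) are codimension $\ge 2$ once we demand simplicity and can be eliminated by the perturbation. The main obstacle is bookkeeping: ensuring that all degenerate failures really are higher-codimension in the perturbation space and that the perturbation chosen to avoid one family does not reintroduce another. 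This is handled uniformly by choosing a single perturbation in the complement of the finite union of all the bad algebraic subvarieties, which is non-empty and open.
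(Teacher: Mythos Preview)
Your overall strategy --- list the ways $D_t$ can fail to be a diagram, observe that each is an algebraic condition, and perturb the endpoint data to make the bad set finite and simple --- is exactly the paper's approach, including the explicit dimension counts for the perturbation space of vertex positions at the two endpoint times.

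However, your catalogue of failure modes is wrong for the piecewise-linear setting, and this creates a genuine gap. Two straight segments in the plane cannot meet ``tangentially''; either they intersect transversely in one point, or their projections lie on a common line. So your type~(a) does not describe how crossings are born or die. In the PL setting, a pair of crossings appears or disappears precisely when a \emph{vertex} of the projection passes through a non-incident edge --- your type~(c) --- and this is a codimension-$1$ event, not codimension~$\geq 2$. You cannot perturb it away; it is where the Reidemeister~2 moves live. Similarly, Reidemeister~1 moves occur when the projection of one edge comes to lie inside the projection of an adjacent edge (the PL analogue of a cusp), which your list does not isolate. The paper organises things by first arranging (via dimension count) that certain degeneracies --- overlapping projections of non-adjacent edges, quadruple points, two simultaneous triple points, two simultaneous vertex-on-edge events, coincident vertex projections --- never occur, and then allowing exactly three codimension-$1$ phenomena at isolated times: an edge's projection falling inside an adjacent edge's projection, a vertex landing on a non-incident edge, and a triple point.

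Finally, your type~(e) is vacuous: $h_t$ is a homeomorphism of $B$, so $h_t(L)$ is always embedded and the two preimage heights at a crossing are always distinct. There is no over/under switch to worry about.
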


\begin{proof} 

We ensure that, for $t$ in $[t_i+\varepsilon,t_{i+1}-\varepsilon]$, the following conditions hold for every link projection $D_t$:
\begin{itemize}
\item for any two non-adjacent edges of the link, their projections intersect in at
most one point;
\item for any four edges of the link, their projections have no common point
of intersection;
\item the link projection has at most one triple point;
\item there is at most one vertex of the link that projects to a point
in the image of a non-incident edge;
\item no two vertices have a common projection.
\end{itemize}

We will also ensure that the following possibilities arise for at most finitely
many link projections:
\begin{enumerate}
\item the projection of some edge lies within the projection of an adjacent edge;
\item a vertex of the link has image lying within the projection of a non-incident edge;
\item there is a triple point.
\end{enumerate}

Note that there will, in general, be moments when (1), (2) or (3) above do occur, 
and typically then a Reidemeister move will be performed, as illustrated in Figure \ref{moves}.

\begin{figure}[h]
\centering
\includegraphics{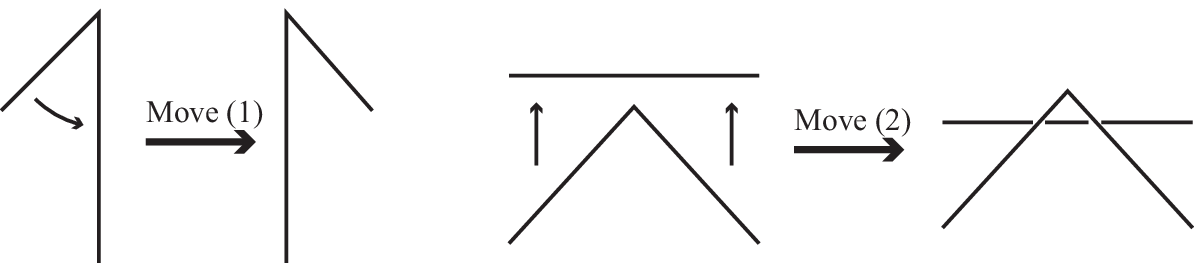}
\caption{} \label{moves}
\end{figure}

We start by arranging the first of these conditions. We will perturb the image of $L$ under the homeomorphisms
$h_{t_i + \varepsilon}$ and $h_{t_{i+1} - \varepsilon}$, but keeping $t_i+ \varepsilon$ and $t_{i+1}- \varepsilon$ fixed. 
Consider any two non-adjacent edges of the link. The images of their four endpoints under
$h_{t_i + \varepsilon}$ and $h_{t_{i+1} - \varepsilon}$ lie in a 24-dimensional vector space.
We wish to consider the subset of this space consisting of configurations
where the projections of the two edges intersect in more than one point. This subset lies in a subspace, the dimension of
which can be computed as follows. The two edges have bad projections at
some point in time. There is a one-dimensional family of possible times.
One of the edges has arbitrary image at this time, and so its position is specified
by 6 co-ordinates. The remaining edge must have projection lying in
the same line as the first edge. Thus, there are two parameters specifying
the endpoints of its projection. Two further parameters give the height
of these endpoints in ${\mathbb R}^3$. As one moves away from this time,
the endpoints of the edges vary linearly. Their derivatives are specified
by 12 further parameters. Thus, the subspace that we need to avoid has at most 23 dimensions.
Hence, by a small perturbation of $h_{t_i + \varepsilon}$ and $h_{t_{i+1} - \varepsilon}$, we can
avoid this subspace. Thus, we can ensure that, throughout the isotopy,
the projection of any two non-adjacent edges of the link intersect in at most one point.

We next ensure that,
throughout the isotopy, the projection of no four edges have a common
point of intersection. This time the ambient space, giving the
position of these four edges at times $t_i+ \varepsilon$ and $t_{i+1}- \varepsilon$ is
48-dimensional. The subspace containing the bad configurations has 47 dimensions:
one giving the time of the bad configuration, two parameters specifying the
point of intersection in the projection, four parameters give the angles
of the lines emanating from this point, eight further parameters give
the lines themselves, eight parameters give the heights in ${\mathbb R}^3$
of their endpoints, and 24 co-ordinates give the derivatives of
these endpoints at that time.

Thus, we can ensure that only triple points arise. Using similar arguments
to the ones above, we can ensure the remaining conditions. \end{proof}

%
%

This proves Proposition \ref{ctsfamily}.

\color{black}

Of course, Proposition \ref{ctsfamily} is not sufficient for our purposes because the addition or removal of unknot summands do not obviously induce a sequence of Reidemeister moves. For this reason we will need to apply the following theorem, whose proof, which we omit, is an easy adaptation of the methods used by Hass and Lagarias to prove the main theorem of \cite{HL}.

\begin{theorem}\label{hlmodified}
Let $D$ and $D'$ be link diagrams and suppose that $D'$ is obtained from $D$ by the addition or removal of an unknot summand. Suppose that $D$ and $D'$ both have at most $n$ crossings. Then there is a sequence of at most $2^{{10}^{11}n}$ Reidemeister moves relating $D$ and $D'$.
\end{theorem}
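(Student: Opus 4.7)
By symmetry, I may assume that $D'$ is obtained from $D$ by adding an unknot summand $U$ at a point $p$ in the interior of an edge of $D$. Since the summand is small, there is a disk $E$ in the plane whose interior contains all of $U$ and whose boundary meets $D'$ in exactly two points, both lying on the edge of $D$ through $p$. Let $T_U$ denote the sub-tangle of $D'$ contained in $E$. Because $U$ is an unknot summand, $T_U$ represents the trivial $1$-tangle in $E$, and all crossings of $U$ are crossings of $D'$, so $c(T_U) = c(U) = c(D') - c(D) \leq n$. My goal will be to reduce $T_U$ to the trivial arc using at most $2^{10^{11}n}$ Reidemeister moves, all performed inside $E$; such moves do not alter the rest of $D'$, so they will transform $D'$ directly into $D$.

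\textbf{Reduction to Hass--Lagarias.} To bound the number of moves I intend to apply the main theorem of \cite{HL}, which asserts that any diagram of the unknot with $m$ crossings can be trivialised via at most $2^{10^{11}m}$ Reidemeister moves. To set this up, I close off the tangle $T_U$ by attaching a cap $D^2$ to $\partial E$ and joining the two endpoints of $T_U$ by a single arc in $D^2$, producing a link diagram $\widetilde U$ on the sphere $S^2 = E \cup D^2$. Since $T_U$ is trivial, $\widetilde U$ is a diagram of the unknot, and $c(\widetilde U) = c(T_U) \leq n$. Hass--Lagarias then provides a sequence of at most $2^{10^{11}n}$ Reidemeister moves on $S^2$ taking $\widetilde U$ to the trivial unknot diagram.

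\textbf{Main obstacle and adaptation.} The essential difficulty, and the reason an adaptation rather than a direct citation is required, is that a generic Hass--Lagarias sequence on $S^2$ may push strands across $\partial E$ onto the artificial cap $D^2$; in our situation such strands must remain inside $E$ so as not to disturb the rest of $D$. The cleanest way to handle this is to rerun the Hass--Lagarias argument in the tangle setting: their proof uses normal surface theory together with Dehn's lemma, applied to the complement of the unknot in $S^3$, to produce a spanning disk of complexity exponential in the number of crossings, and this argument transfers verbatim to the complement of the trivial $1$-tangle $T_U$ in the product $E \times I$. The spanning disk then dictates a sequence of Reidemeister moves inside $E$ with the same bound $2^{10^{11}n}$. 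An equivalent alternative is to take the sequence on $S^2$ supplied by \cite{HL} and replace each move that intrudes onto the cap $D^2$ by an isotopic move realised inside $E$; this is always possible because the cap contains only a single unknotted arc, and the reroutation costs only a bounded multiplicative factor, which is absorbed into the exponential estimate. Either way we obtain a sequence of at most $2^{10^{11}n}$ Reidemeister moves reducing $T_U$ to the trivial arc, hence $D'$ to $D$, completing the proof.
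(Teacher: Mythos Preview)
The paper itself omits the proof of this theorem, stating only that it ``is an easy adaptation of the methods used by Hass and Lagarias to prove the main theorem of \cite{HL}.'' Your approach (a)---rerunning the Hass--Lagarias normal-surface argument in the relative setting of a trivial $1$-tangle in a ball, so that the spanning disk and the resulting Reidemeister moves live entirely over $E$---is precisely this adaptation, and your setup (isolating the summand in a disk $E$, bounding its crossing number by $n$, and seeking moves confined to $E$) is correct and matches the intended argument.

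Your alternative (b), however, has a gap. You justify rerouting each move that strays onto the cap $D^2$ by saying ``the cap contains only a single unknotted arc,'' but this holds only for the initial diagram $\widetilde U$. Once the Hass--Lagarias sequence is underway, intermediate diagrams may carry many strands across $\partial E$ onto $D^2$, and there is no a priori control on the cost of pushing them back into $E$; the claimed ``bounded multiplicative factor'' is unsubstantiated. Since you present (a) and (b) as alternatives, the argument survives on (a) alone, but (b) should be dropped.
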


Thus, from a continuous family of link projections interpolating
between $D_1$ and $D_2$, we can find a sequence of Reidemeister moves taking
$D_1$ to $D_2$. In order to efficiently bound the number of moves required we will need the following theorem.

\begin{theorem} \label{numberofdiagrams}
Up to ambient isotopy, there are at most $(24)^{n+1}$ connected, unoriented link diagrams with at most
$n$ crossings.
\end{theorem}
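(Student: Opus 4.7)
The plan is to separate a diagram's combinatorial data into two parts: the underlying $4$-valent plane graph (the link \emph{shadow}) and the crossing information. Up to ambient isotopy of $\mathbb R^2$, a connected unoriented link diagram with $c$ crossings is determined by (i) its underlying $4$-valent connected plane graph $G(D)$, considered up to orientation-preserving homeomorphism of the plane, and (ii) a binary choice at each of the $c$ crossings specifying which of the two strands passes over. Writing $P(c)$ for the number of $4$-valent connected plane graphs on $c$ vertices modulo orientation-preserving homeomorphism of $\mathbb R^2$, the total number of connected unoriented diagrams with exactly $c$ crossings is therefore at most $2^c \cdot P(c)$.

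To bound $P(c)$ I would pass to rooted maps. A \emph{rooted} connected $4$-valent planar map is such a graph equipped with a distinguished oriented edge; since every unrooted map admits at least one rooting, $P(c) \le R(c)$, where $R(c)$ denotes the number of rooted connected $4$-valent planar maps with $c$ vertices. The quantity $R(c)$ is a classical enumerative object: Tutte's formula gives
\[
R(c) \;=\; \frac{2 \cdot 3^c\,(2c)!}{c!\,(c+2)!},
\]
and since $(2c)!/(c!)^2 = \binom{2c}{c} \le 4^c$ this yields $R(c) \le 2 \cdot 12^c / [(c+1)(c+2)] \le 12^c$. Hence the number of connected unoriented diagrams with exactly $c$ crossings is at most $2^c \cdot 12^c = 24^c$, and summing over $0 \le c \le n$ gives
\[
\sum_{c=0}^n 24^c \;=\; \frac{24^{n+1}-1}{23} \;<\; 24^{n+1},
\]
as required.

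The main obstacle is the bound $R(c) \le 12^c$. Quoting Tutte's enumeration is clean and immediate, but if one prefers a self-contained argument, one must produce a direct combinatorial encoding of rooted connected $4$-valent planar maps. The natural idea is to perform a depth-first traversal of the map starting from the root half-edge and record, at each visit to a vertex, the position in the local cyclic order of the chosen outgoing half-edge; since each vertex is entered four times but only three interior choices are genuinely free (the final exit being forced) and the planar rotation system imposes further compatibility constraints, one can arrange that the total number of distinct codewords is at most $12^c$. The delicate point is ensuring injectivity: one must verify that the planar rotation system, and hence the entire combinatorial map, is reconstructible from the codeword alone.
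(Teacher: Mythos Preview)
Your argument is correct and follows essentially the same route as the paper: both bound the number of shadows via Tutte's enumeration, multiply by $2^c$ for the crossing data, and sum the resulting geometric series. The only cosmetic difference is which incarnation of Tutte's formula is invoked---the paper (following Welsh) passes through rooted bicubic maps obtained by blowing up each $4$-valent vertex into a square, whereas you cite the count of rooted $4$-valent planar maps directly; the two enumerations are equivalent (medial/radial constructions) and yield the identical expression $\dfrac{2\cdot 3^c(2c)!}{c!(c+2)!}$, so the arithmetic thereafter is the same.

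Your final paragraph, sketching a self-contained DFS encoding as an alternative to citing Tutte, is not needed for the theorem and is left vague; if you want to include it you should either drop it or flesh out the injectivity claim, since as written it is only a heuristic.
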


\begin{proof} This is an adaptation of an argument of Welsh \cite{W}. We use a theorem of Tutte from \cite{tutte}, which counts `rooted bicubic maps'.
A \emph{map} is a cell structure on the 2-sphere, or equivalently an embedded connected planar
graph. It is \emph{trivalent} if the valence of each of its vertices is $3$.
It is \emph{bicubic} if it is trivalent and bipartite. A map is
\emph{rooted} if one of its edges is chosen and oriented, and the two faces on
either side of this edge are specified as lying on the left and right of the edge.
The point of using rooted maps is that they have no symmetries, and so they
are easier to count. It is easy to show that the faces of a bicubic map
may be coloured using only three colours, and so that adjacent faces have distinct
colours. This colouring is unique once the colours adjacent to some edge are chosen.
If the bicubic map is rooted, then the \emph{root colour} is the face colour not
adjacent to the specified edge. Tutte proved in 4.3 of \cite{tutte} that the number of
rooted bicubic maps in which there are just $n$ faces of the root colour is
$${2(2n)!3^n \over n! (n+2)!}.$$
Now, given a connected link diagram, there is a simple way of creating a bicubic map.
One simply replaces each 4-valent vertex of the link projection with a square consisting
of four vertices and four edges. One can root the bicubic map by picking one of the
original edges of the diagram. The new squares are then assigned the root colour.
Hence, we deduce that the number of connected rooted embedded 4-valent planar graphs with $n$ vertices is
also at most 
$${2(2n)!3^n \over n! (n+2)!}.$$
There are $2^n$ ways of assigning the crossing information and so there are at most
$${2(2n)!6^n \over n! (n+2)!}$$
connected link diagrams with $n$ crossings. So, the number with at most $n$ crossings is no more than
$$\sum_{k=0}^n {2(2k)!6^k \over k! (k+2)!} \leq \sum_{k=0}^n 2 \left ( {2k \atop k} \right ) {6^k \over (k+1)^2}
\leq \left ( \sum_{k=0}^n 24^k \right ) \left (\sum_{k=1}^\infty {1 \over k^2} \right ) \leq 24^{n+1},$$
as required.
\end{proof}

Theorem  \ref{numberofdiagrams} yields the following immediate corollary:

\begin{cor} \label{numberofdiagramscor}Up to ambient isotopy, there are at most $(48)^{n+1}$
connected, oriented link diagrams with at most n crossings.
\end{cor}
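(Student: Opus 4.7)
The plan is to derive the oriented bound from the unoriented bound of Theorem \ref{numberofdiagrams} by accounting for the freedom to orient each link component. If an unoriented connected diagram $D$ has $k$ link components, then up to ambient isotopy it admits at most $2^k$ oriented lifts, since an orientation is specified by an independent binary choice on each component, and any symmetries of $D$ only identify orientation classes further. Thus it suffices to show that any connected diagram with $n$ crossings has at most $n+1$ link components; combining this with Theorem \ref{numberofdiagrams} will yield at most $2^{n+1} \cdot 24^{n+1} = 48^{n+1}$ oriented connected diagrams with at most $n$ crossings.

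To establish the component bound $k \leq n+1$, I would form an auxiliary graph $H$ whose vertex set is the collection of link components of $D$, with an edge between two vertices for each crossing of $D$ at which the two corresponding distinct components meet (self-crossings are ignored). The claim is that $H$ is connected. Indeed, suppose for contradiction that the components of the link could be partitioned into non-empty sets $A$ and $B$ with no crossing involving a component from each. Then the underlying $4$-valent planar graph of $D$ would decompose as the disjoint union of the arcs belonging to $A$-components and those belonging to $B$-components, since every arc emanating from a crossing involving only $A$-components continues through to another $A$-only crossing. This contradicts the connectedness of $D$. A connected graph on $k$ vertices has at least $k-1$ edges, so $D$ contains at least $k-1$ inter-component crossings, giving $n \geq k-1$ and hence $k \leq n+1$.

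There is no substantial obstacle here; the only subtle point is the bound on the number of link components, which follows from the component-graph argument above. Assembling the pieces, the count of oriented, connected diagrams with at most $n$ crossings, up to ambient isotopy, is bounded by $\sum_D 2^{k(D)} \leq 2^{n+1} \cdot 24^{n+1} = 48^{n+1}$, where the sum ranges over unoriented connected diagrams with at most $n$ crossings and $k(D)$ denotes the number of components of the link represented by $D$. This yields the corollary.
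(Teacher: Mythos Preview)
Your argument is correct and is precisely the natural justification the paper has in mind: it states the corollary as an ``immediate'' consequence of Theorem~\ref{numberofdiagrams} without further proof, and the only missing ingredient is the bound $k\le n+1$ on the number of link components of a connected diagram, which you supply via the component--graph argument. Your reasoning is sound in all cases, including $n=0$.
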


We may combine Theorem \ref{hlmodified} and Corollary \ref{numberofdiagramscor} to obtain the following corollary.

\begin{cor} \label{numberofmoves}
Suppose that there is a finite sequence of diagrams,
starting with $D_1$ and ending with $D_2$, so that successive diagrams are
related by either a Reidemeister move or the addition or removal of an unknot summand. Suppose that each diagram in this sequence is connected
and has at most $n$ crossings. Then, there is a sequence of at most  $2^{{10}^{12}n}$ Reidemeister
moves joining $D_1$ to $D_2$.
\end{cor}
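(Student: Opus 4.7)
The plan is to combine Theorem \ref{hlmodified} (which bounds the number of Reidemeister moves needed to realise a single unknot summand addition or removal) with Corollary \ref{numberofdiagramscor} (which bounds the total number of connected diagrams of bounded crossing number), after first pruning the given sequence so that no diagram is repeated.

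First, I would prune the input sequence $D_1 = E_0, E_1, \ldots, E_N = D_2$ by repeatedly removing any cycles: whenever $E_i = E_j$ with $i<j$, replace the sequence by $E_0, \ldots, E_i, E_{j+1}, \ldots, E_N$. Note that the step from $E_i$ to $E_{j+1}$ is still a single Reidemeister move or a single unknot summand addition/removal, since it coincides with the original step from $E_j$ to $E_{j+1}$. Iterating this yields a new sequence, still joining $D_1$ to $D_2$ with the same type of single-step transitions, in which all diagrams are distinct, all are connected, and all still have at most $n$ crossings. By Corollary \ref{numberofdiagramscor}, the length of the pruned sequence is at most $48^{n+1}$ (using the oriented count, since $L$ was oriented in the main application; the unoriented bound $24^{n+1}$ from Theorem \ref{numberofdiagrams} would work equally well).

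Next, I would replace each transition in the pruned sequence by a sequence of Reidemeister moves. A Reidemeister transition contributes exactly one move. An unknot summand addition or removal occurs between two diagrams each with at most $n$ crossings, so by Theorem \ref{hlmodified} it can be realised by at most $2^{10^{11}n}$ Reidemeister moves. Summing over the at most $48^{n+1}$ transitions gives a total of at most
\[
48^{n+1}\cdot 2^{10^{11}n}
\]
Reidemeister moves taking $D_1$ to $D_2$.

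Finally, the only remaining task is the routine arithmetic estimate. Since $\log_2 48 < 6$, we have $48^{n+1} \le 2^{6(n+1)}$, so the total number of moves is at most $2^{6(n+1) + 10^{11}n}$, which is comfortably bounded by $2^{10^{12}n}$ for all $n \ge 1$. I do not anticipate any genuine obstacle here; the entire argument is a combinatorial packaging of the two cited results, and the only thing one needs to be careful about is the pruning step, which ensures that the number of transitions (rather than merely the number of elementary Reidemeister moves) is controlled by the finite count of available diagrams.
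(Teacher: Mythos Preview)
Your proposal is correct and follows essentially the same approach as the paper: prune the sequence so that all diagrams are distinct, bound its length by $48^{n+1}$ via Corollary \ref{numberofdiagramscor}, bound each transition by $2^{10^{11}n}$ Reidemeister moves via Theorem \ref{hlmodified}, and multiply. The paper phrases the pruning step as ``choose the sequence to be as short as possible,'' but the content is identical to your cycle-removal argument.
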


\begin{proof}
Without loss of generality suppose that the hypothesized sequence of diagrams is chosen to be as short as possible. Then no two diagrams are isotopic and so, by Corollary \ref{numberofdiagramscor}, the sequence consists of at most $(48)^{n+1}$ diagrams. By Theorem \ref{hlmodified}, passing from one diagram to the next can be achieved with at most $2^{{10}^{11}n}$ Reidemeister moves. The product of these two expressions is less than the desired bound.
\end{proof}

%

%
%
%
%
%
%
%

Combining Theorem \ref{alexandertrick}, Proposition \ref{ctsfamily} and Corollary \ref{numberofmoves}, we obtain the
following.

\begin{corollary} \label{reidbound}
Let $h \colon B \rightarrow B$ be a PL homeomorphism of a convex polyhedral 3-ball $B$, which fixes  $\partial B$ pointwise, and which has the
property that it sends each straight arc in $B$ to a concatenation of at most $m$
straight arcs. Let $L$ be an non-split oriented link in $B$ that is the concatenation of at most $n$
straight arcs. Suppose that $L$ and $h(L)$ project to oriented diagrams $D_1$ and $D_2$.
Then $D_1$ and $D_2$ differ by a sequence of at most
$$2^{{10}^{12}(n(m+2))^2} \leq 2^{{10}^{13}(nm)^2}$$
Reidemeister moves.
\end{corollary}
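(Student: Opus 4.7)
The plan is to chain together the three preceding results. First I would apply Theorem \ref{alexandertrick} to the given homeomorphism $h$. Since $h$ fixes $\partial B$ pointwise and sends each straight arc in $B$ to a concatenation of at most $m$ straight arcs, Alexander's trick produces an ambient isotopy $h_t \colon B \to B$ with $h_0 = \mathrm{id}$ and $h_1 = h$, such that every $h_t$ maps each straight arc to a concatenation of at most $m+2$ straight arcs. In particular, since $L$ is itself a concatenation of at most $n$ straight arcs, $h_t(L)$ is a concatenation of at most $n(m+2)$ straight arcs for each $t \in [0,1]$.

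Next I would feed this isotopy into Proposition \ref{ctsfamily}. The hypotheses of that proposition are exactly what we have just arranged: a PL ambient isotopy of a convex polyhedral 3-ball starting at the identity, applied to a PL link whose image at every time consists of at most $N := n(m+2)$ straight arcs, and whose projections at times $0$ and $1$ (namely $D_1$ and $D_2$) are honest diagrams. The conclusion is a sequence of diagrams starting at $D_1$ and ending at $D_2$ such that consecutive diagrams differ either by a single Reidemeister move or by the addition or removal of an unknot summand, and such that every diagram in the sequence has at most $N^2 = (n(m+2))^2$ crossings.

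Then I would invoke Corollary \ref{numberofmoves} with $n$ replaced by $(n(m+2))^2$. Because $L$ is non-split and oriented, all the intermediate diagrams inherit connectivity and orientation from the isotopy, so the corollary applies and produces a sequence of at most $2^{10^{12}(n(m+2))^2}$ Reidemeister moves relating $D_1$ and $D_2$. This yields the first inequality in the statement, and the second is a trivial arithmetic slack, using $(m+2)^2 \le 10\, m^2$ for $m \ge 1$ so that $10^{12}(n(m+2))^2 \le 10^{13}(nm)^2$.

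There is no single hard step here; the work has already been front-loaded into Theorem \ref{alexandertrick}, Proposition \ref{ctsfamily}, and Corollary \ref{numberofmoves}. The only mild care needed is bookkeeping: one must verify that the intermediate diagrams produced by Proposition \ref{ctsfamily} really are connected (this follows from $L$ being non-split together with the fact that the isotopy carries the ambient ball to itself, so the projection of each $h_t(L)$ has a single planar component up to the small unknot summands added in the proof of that proposition, which are themselves connected to the rest), and that the crossing bound $N^2$ holds uniformly so that Corollary \ref{numberofmoves} may be applied with a single value of $n$. Once these points are checked, the corollary follows.
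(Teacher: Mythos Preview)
Your proposal is correct and follows exactly the approach indicated in the paper, which simply states that the corollary is obtained by combining Theorem \ref{alexandertrick}, Proposition \ref{ctsfamily}, and Corollary \ref{numberofmoves}. You have spelled out the chain of implications in more detail than the paper does, including the connectivity check needed for Corollary \ref{numberofmoves} (which holds because every diagram in the sequence is a diagram of the non-split link $L$, hence connected).
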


\section{Proof of the main theorem}

In this section, we will prove Theorem \ref{fromptor}. This will quickly yield Theorem \ref{maintheorem}, assuming Theorem \ref{mijatovicmodified}. All diagrams in this section will be oriented. Note that Theorem  \ref{maintheorem} for unoriented diagrams follows from the version for oriented diagrams. 

Let $D_1$ and $D_2$ be connected diagrams for some knot or non-split link $L$ in the 3-sphere.
Let $n_1$ and $n_2$ be their crossing numbers, and let $n = n_1 + n_2$.
We wish to find a sequence of diagrams taking $D_1$ to $D_2$, where successive
diagrams are related by a Reidemeister move.

We suppose that $L$ is not the unknot, for in this case Theorem \ref{fromptor} follows from \cite{HL}. (Theorem  \ref{maintheorem} also follows immediately from  \cite{HL} in this case.) 

Start by applying type 1 Reidemeister moves to $D_1$ and $D_2$ so the writhes of corresponding components agree. Call the resulting diagrams $D_1'$ and $D_2'$ respectively. They have at most $2n_1$ and $2n_2$ crossings respectively. The number of Reidemeister moves required is at most $n$.

Use Theorem \ref{trifromdiagram} to create triangulations $T_1$ and
$T_2$ of the 3-ball $B$ with $L_1$ and $L_2$ as subcomplexes.
The vertical projections of $L_1$ and $L_2$ are the diagrams $D_1'$ and $D_2'$. Theorem  \ref{trifromdiagram}  ensures
that the triangulations $T_1$ and $T_2$ contain neighbourhoods $N(L_1)$ and $N(L_2)$ of $L_1$ and $L_2$ that
are subcomplexes and that each have a standard triangulation
of the solid torus. By choosing the integers $n_i$ in Theorem
 \ref{trifromdiagram}  appropriately, we may arrange that these triangulations of the solid
tori have the same length, and hence are combinatorially
isomorphic. Further, since  $D_1'$ and $D_2'$ have the same writhes, we may also ensure that the homeomorphism
$N(L_1) \rightarrow N(L_2)$ that realises this combinatorial
isomorphism preserves longitudes. Since $2^{13}n_1+120(40(n_1+n_2)+11) < 2^{14}n$, the triangulations $T_1$ and $T_2$  may be taken to contain at most $t = 2^{14}n$ tetrahedra each.
Apply Theorem \ref{3balltriangulations} to give a sequence of at most
$N = 100 P_L(t,t) + 200t$ interior Pachner moves, followed by a
combinatorial isomorphism, that takes $T_1$ to $T_2$, and $N(L_1)$
to $N(L_2)$. Further, none of the Pachner moves affect $N(L_1)$ or $\partial B$, and the combinatorial isomorphism restricts to the identity on $\partial B$. This sequence of Pachner moves gives a sequence of
$\Delta$-complexes joined by PL homeomorphisms
$$|T_1| = |T^0| \buildrel h_1 \over \longrightarrow |T^1|
\buildrel h_1 \over \longrightarrow \dots \buildrel h_{N} \over \longrightarrow |T^N| = |T_2|.$$
Let $h = \phi_2^{-1} h_{N} \dots h_1 \phi_1$ be the resulting homeomorphism
$B \rightarrow B$, where $\phi_1$ and $\phi_2$ are the homeomorphisms associated with $T_1$ and $T_2$. Note that $h(L_1) = L_2$.

We will need the following concept. Let $T$ be a triangulation of a
3-manifold. Then an arc in $|T|$ is said to be \emph{straight} if it lies in a
single simplex and is straight in the affine structure on that
simplex. We now apply the following straightforward lemma.

\begin{lemma} \label{straight2}
Suppose that $T^i$ and $T^{i+1}$ are triangulations of a 3-manifold that
are related by an interior Pachner move. Let $h_{i+1} \colon |T^i| \rightarrow |T^{i+1}|$ be
the resulting homeomorphism. Then, $h_{i+1}$ sends each straight arc in $|T^i|$
to a concatenation of at most $4$ straight arcs in $|T^{i+1}|$.
\end{lemma}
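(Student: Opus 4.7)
\medskip

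\textbf{Proof plan.} The natural approach is to factor $h_{i+1}$ through the common subdivision $T''$ described earlier in Section 2. Recall that $T''$ is obtained from $T^i$ (equivalently, from $T^{i+1}$) by inserting a central vertex $v$ in the interior of the 3-ball $D$ of affected tetrahedra and coning over $\partial D$; by definition of a subdivision, the associated PL homeomorphisms $h\co |T''|\to |T^i|$ and $h'\co |T''|\to |T^{i+1}|$ restrict to affine maps on every simplex of $T''$. Consequently, $h_{i+1}=h'\circ h^{-1}$ is affine on the image under $h$ of each $3$-simplex of $T''$. If a straight arc $\alpha$ lies in a tetrahedron $\Delta$ of $T^i$ untouched by the move, then $h_{i+1}(\alpha)=\alpha$ and we are done. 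Otherwise $\Delta\subset D$, and the pieces of $\alpha$ produced by the factorisation correspond precisely to the sub-tetrahedra of $T''$ through which $\alpha$ passes inside $\Delta$. Since each such sub-tetrahedron (under $h$) is a convex subset of $\Delta$, and a straight segment meets a convex set in a (possibly empty) subsegment, the number of straight pieces of $h_{i+1}(\alpha)$ is at most the number of sub-tetrahedra of $T''$ whose image under $h$ meets $\Delta$.

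It therefore suffices to verify, case by case for the four types of Pachner move, that this count is at most $4$. First I would pin down where $h(v)$ must sit: because $h$ sends each sub-tetrahedron $v*T$ of $T''$ (with $T$ a triangle of $\partial D$) into the unique tetrahedron of $T^i$ containing $T$, the image $h(v)$ must lie in the intersection of all the tetrahedra of $D$ that meet $\partial D$. This pins $h(v)$ down: for $(1,4)$ it is an interior point of the one tetrahedron; for $(2,3)$ it lies on the shared $2$-face; for $(3,2)$ it lies on the common edge; and for $(4,1)$ it sits at the shared central vertex. A short count of the cone tetrahedra lying in $\Delta$ then gives $4$, $3$, $2$, and $1$ sub-tetrahedra respectively, and in each case $\Delta$ is actually partitioned by them. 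Thus at most four straight pieces arise, achieved by the $(1,4)$ case.

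There is no real obstacle here beyond the case-by-case bookkeeping; the arguments are elementary convex geometry inside a single $3$-simplex. The only mild subtlety is the observation that forces $h(v)$ to lie in the appropriate shared face/edge/vertex, which is what makes the cone sub-tetrahedra actually form a subdivision of $\Delta$ rather than overflow into neighbouring tetrahedra of $D$, and so makes the convexity-of-intersection argument give the stated bound of~$4$.
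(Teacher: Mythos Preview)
Your argument is correct. The paper does not actually supply a proof of this lemma; it is stated as a ``straightforward lemma'' and left to the reader. Your approach---factoring $h_{i+1}$ through the common subdivision $T''$ defined in Section~\ref{sec:pltheory}, locating $h(v)$ in the intersection of the tetrahedra of $D$, and then counting the cone tetrahedra that partition a single tetrahedron $\Delta$ of $T^i$---is precisely the argument one would expect from the paper's setup, and the case count $(4,3,2,1)$ for the moves $(1,4),(2,3),(3,2),(4,1)$ is right.
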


Now, $\phi_1$ sends each straight arc in $B$ to a concatenation of at most $t$
straight arcs in $|T_1|$. By Lemma \ref{straight2}, this is sent to a concatenation of
at most $4^Nt$ straight arcs in $|T_2|$. Each of these arcs
is sent to a straight arc in $B$. Thus we obtain the following.

\begin{corollary} \label{straight1}
 Each straight arc in $B$ is sent, via $h$, to a concatenation of at most $4^Nt$
straight arcs.
\end{corollary}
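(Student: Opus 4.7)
The plan is to simply track a straight arc through the decomposition $h = \phi_2^{-1} h_N \cdots h_1 \phi_1$ and apply Lemma \ref{straight2} inductively at each Pachner move.

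First I would handle the outer factor $\phi_1 \colon B \to |T_1|$. By Theorem \ref{trifromdiagram}, every simplex of $T_1$ is straight in the affine structure on $\mathbb{R}^3$, so $\phi_1$ restricts to an affine map on each tetrahedron. A straight arc in $B$ therefore meets the interiors of at most $t$ tetrahedra of $T_1$, and its image under $\phi_1$ is a concatenation of at most $t$ arcs, each of which is straight in the ambient affine structure of the corresponding simplex of $|T_1|$.

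Next I would apply the Pachner step. By Lemma \ref{straight2}, each homeomorphism $h_{i+1} \colon |T^i| \to |T^{i+1}|$ sends each straight arc of $|T^i|$ to a concatenation of at most $4$ straight arcs of $|T^{i+1}|$. An easy induction on $i$ then shows that the composition $h_N \cdots h_1 \circ \phi_1$ takes the original straight arc in $B$ to a concatenation of at most $4^N t$ straight arcs in $|T_2|$.

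Finally, $\phi_2^{-1} \colon |T_2| \to B$ is, like $\phi_1$, affine on every tetrahedron (by the same clause of Theorem \ref{trifromdiagram}), so it carries each straight arc in a simplex of $|T_2|$ to a straight arc in $B$. Composing, $h$ sends the original straight arc to a concatenation of at most $4^N t$ straight arcs in $B$, as required. There is essentially no obstacle here beyond correctly invoking the ``straight simplex'' clause of Theorem \ref{trifromdiagram} at both ends and iterating Lemma \ref{straight2}; the bound is immediate from the multiplicativity of the $4$ factor per Pachner move.
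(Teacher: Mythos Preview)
Your argument is correct and is exactly the paper's own derivation: the paper observes that $\phi_1$ breaks a straight arc into at most $t$ straight pieces (one per tetrahedron, since the simplices are affine in $\mathbb{R}^3$), that each Pachner homeomorphism multiplies the count by at most $4$ via Lemma~\ref{straight2}, and that $\phi_2^{-1}$ preserves straightness simplex-wise. There is nothing to add.
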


Now, since $L_1$ lies in the 1-skeleton of $T_1$, it consists of at most $6t = 6 \cdot 2^{14}n$ straight arcs in $B$. Hence we may apply Corollary \ref{reidbound} to conclude that  there is a sequence of at most 
$$2^{{10}^{13}(6t\cdot 4^Nt)^2} \leq \exp({{10}^{15}t^42^{4N}})$$ Reidemeister moves taking $D_1$ to $D_2$. 

This proves Theorem  \ref{fromptor} since
\begin{align*}
\exp({{10}^{15}t^42^{4N}})  &= \exp({{10}^{15}t^42^{4 (100 P_L(t,t) + 200t)}}) \\
&\leq  \exp( 2^{50 +4t + 4 (100 P_L(t,t) + 200t)} )\\
&\leq  \exp( 2^{400(P_L(t,t) + 4t)} )\\
&= \exp^{\left( 2 \right) }({400(P_L(2^{14}n,2^{14}n) + 2^{16}n)} ).
\end{align*}

Now, Theorem \ref{mijatovicmodified} may be applied to non-split link exteriors because the 3-sphere cannot contain
a 3-dimensional submanifold that has a single boundary component, fibres over
the circle, and is a surface semi-bundle. Indeed, no ${\mathbb Z}_2$-homology 3-sphere
can contain such a submanifold. Hence Theorem \ref{maintheorem} follows
in the case where the link is not split, assuming Theorem \ref{mijatovicmodified}, because
$$ \exp^{\left( 2 \right) }({400(2\exp^{\left( a^{t} \right) }(t) + 4t)} ) \leq  \exp^{\left( 2 \right) }{(\exp^{\left( a^{t} + 10 \right) }(t))}  = \exp^{\left( a^{t} + 12 \right) }(t) \leq \exp^{(c^n)}(n),$$ where $c = 2^{163\cdot 2^{14}} \leq 10^{1,000,000}$. 

It remains to prove Theorem \ref{maintheorem} in the case where $L$ is split. In this case, we apply the following result.

\begin{proposition} \label{splitlink}
Let $L$ be a split link, and
write $L = L_1 \cup \dots \cup L_r$, where each $L_i$
is non-split, and when $i \not= j$, $L_i$ and $L_j$ are separated by a
sphere that lies in the complement of $L$. Let $D$ be a diagram
for $L$ with $n$ crossings, and let $D_i$ be the restriction of this diagram to
$L_i$. Then there is a sequence of at most $2(r-1)\exp^{\left(k^n \right)}(k^n)$
Reidemeister moves taking $D$ to the distant union of $D_1, \dots, D_r$,
where $k = 2^{129}$.
\end{proposition}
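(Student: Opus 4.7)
I would prove the proposition by induction on $r$, the base case $r=1$ being trivial. For the inductive step, the goal is to peel off one non-split component $L_1$ using at most $2\exp^{\left(k^n\right)}(k^n)$ Reidemeister moves, producing a diagram of the form $D_1 \sqcup D^{(r-1)}$, where $D^{(r-1)}$ is a connected diagram for $L_2 \cup \dots \cup L_r$ with at most $n$ crossings whose restriction to each $L_i$ matches $D_i$ up to planar isotopy. The inductive hypothesis applied to $D^{(r-1)}$ then produces the distant union $D_2 \sqcup \dots \sqcup D_r$ in at most $2(r-2)\exp^{\left(k^n\right)}(k^n)$ further moves, giving the total bound.

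\textbf{Peeling off a component.} To split off $L_1$, I would realize $L$ as a PL link in a convex $3$-ball $B \subset \mathbb{R}^3$ whose vertical projection is $D$, using Theorem \ref{trifromdiagram}. Since $L$ is split there is a PL $2$-sphere $S \subset B$ in the complement of $L$ separating $L_1$ from $L' = L_2 \cup \dots \cup L_r$. I would then construct a PL ambient isotopy of $B$ fixing $\partial B$ and $L'$ setwise, which translates $L_1$ horizontally (routing around $L'$ through the region bounded by $S$) to a position whose projection is a copy of $D_1$ disjoint from the projection of $L'$; the end-of-isotopy diagram is precisely the desired $D_1 \sqcup D^{(r-1)}$. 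Following the framework of the proof of Theorem \ref{fromptor}, this reduces to constructing two triangulations of $B$, both of size $O(n)$, containing $L$ in its initial and final positions as subcomplexes, producing a sequence of Pachner moves between them, converting this sequence to a PL homeomorphism $h \colon B \to B$ of controlled straight-arc multiplicity via Section 3's technology, applying Alexander's trick (Theorem \ref{alexandertrick}), and finally invoking Corollary \ref{reidbound} to obtain the Reidemeister-move bound.

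\textbf{Supplying the Pachner bound.} The obstruction to applying Theorem \ref{fromptor} off the shelf is that $E(L)$ is reducible (because $S$ is essential), so Theorem \ref{mijatovicmodified} does not apply to $E(L)$ as a whole. I would instead cut $E(L)$ along $S$ into two closed pieces, each of which is homeomorphic to the exterior of a non-split link (namely $L_1$ and $L'$ respectively) inside a $3$-ball, and each of which therefore has an additional $2$-sphere boundary component. Each piece now satisfies the hypotheses of Theorem \ref{mijatovicmodified}, producing a Pachner-move sequence of the standard tower length relating the initial and final triangulations of that piece. I would then stitch these two sequences together along $S$ using the puncturing machinery of Section 3 (Lemma \ref{puncturingbound} and Theorem \ref{2sphereboundary}), which was designed precisely to handle $2$-sphere boundary components by capping them off with a tetrahedron and doing the Pachner moves on the capped manifold. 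The factor of $2$ in the final estimate plausibly reflects the two sides of $S$, each contributing a tower's worth of Pachner moves.

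\textbf{Main obstacle.} The principal difficulty is exactly the reducibility of $E(L)$: Mijatovi\'c's machinery rests on irreducibility, and the entire technical content of the proposition is to do the splitting-sphere surgery quantitatively. One must choose the sphere $S$ in a way compatible with both the initial and target triangulations, ensure that the reassembled homeomorphism of $B$ fixes $L'$ setwise so that $D_2, \dots, D_r$ survive into the next inductive step, and track the constants from Theorems \ref{mijatovicmodified} and \ref{fromptor} through the puncturing construction to verify that they indeed collapse into $k = 2^{129}$. A secondary issue is checking that the subdiagram of $L_2 \cup \dots \cup L_r$ emerging from the peeling step really does agree with the original restriction of $D$ to that sublink up to a negligible planar isotopy, so that the induction closes on the original $D_i$'s rather than on some other diagrams of the $L_i$'s.
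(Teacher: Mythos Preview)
Your approach does not match the paper's, and it cannot yield the stated bound. The proposition is proved by a direct appeal to a theorem of Hayashi \cite{hayashi}: if $D$ is any diagram of a split link with $n$ crossings, there is a sequence of at most $\exp^{(k^n)}(k^n)$ Reidemeister moves, with $k = 2^{129}$, taking $D$ to a disconnected diagram $D'$. The constant $k = 2^{129}$ is Hayashi's constant, not something derived from the Mijatovi\'c/Pachner machinery of this paper. Once $D'$ separates $L$ into sublinks $K_1$ and $K_2$, one observes that reversing the subset of those moves which involved crossings between $K_1$ and $K_2$ takes $D'$ back to the distant union of the restrictions $E_1, E_2$ of the original $D$ to $K_1, K_2$; this accounts for the factor of $2$. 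Iterating $r-1$ times gives the bound.

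Your route through triangulations, Pachner moves, and Theorem \ref{mijatovicmodified} has two problems. First, it would at best produce a bound governed by the constant $a = 2^{162}$ (or $c = 10^{1{,}000{,}000}$) of Theorems \ref{mijatovicmodified} and \ref{maintheorem}, not $k = 2^{129}$; the specific constant in the statement is a signal that an external splitting result is being invoked. Second, your decomposition along a single sphere $S$ produces on one side the exterior of $L' = L_2 \cup \dots \cup L_r$, which is still reducible whenever $r > 2$, so Theorem \ref{mijatovicmodified} does not apply to that piece either; you would be forced into a further inductive cut, but then the Pachner bounds compound rather than simply add, and you do not recover the linear factor $2(r-1)$.
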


\begin{proof}
It is a theorem of Hayashi \cite{hayashi} that if $D$ is a diagram of a split link with $n$
crossings, then there is a sequence of at most $\exp^{\left(k^n \right)}(k^n)$
Reidemeister moves taking it to a disconnected diagram $D'$, where $k = 2^{129}$.
(We have simplified Hayashi's bound, at the cost of a slight increase to it.)
Suppose that this disconnected diagram separates the link into two
subsets $K_1$ and $K_2$. Then, the restriction of $D$ to each $K_i$
is a diagram $E_i$ for $K_i$. We may now reverse some of these Reidemeister
moves to take $D'$ to the distant union of $E_1$ and $E_2$.
We apply this argument $r-1$ times, and we end with the distant
union of $D_1, \dots, D_r$.
\end{proof}

Write $L = L_1 \cup \dots \cup L_r$, where each $L_i$
is non-split, and when $i \not= j$, $L_i$ and $L_j$ are separated by a
sphere that lies in the complement of $L$. From $D_1$, we obtain diagrams
$D_{1,1}, \dots, D_{1,r}$ for $L_1, \dots, L_r$, by disregarding components
not belonging to the relevant $L_i$. Let $D_1'$ be the distant union of the
diagrams $D_{1,1}, \dots, D_{1,r}$.
Similarly, we obtain diagrams $D_{2,1}, \dots, D_{2,r}$ from $D_2$, and let
$D_2'$ be their distant union. Let $n_{i,j}$
be the number of crossings in $D_{i,j}$. Thus, $n_{1,1} + \dots + n_{1,r} \leq n_1$,
and $n_{2,1} + \dots + n_{2,r} \leq n_2$. Let $m_j = n_{1,j} + n_{2,j}$.
By Theorem \ref{maintheorem} in the non-split case, there is a sequence of at most
$$\exp^{\left ( c^{m_1} \right ) }(m_1) + \dots + \exp^{\left ( c^{m_r} \right ) }(m_r)$$
Reidemeister moves taking $D_1'$ to $D_2'$. By Proposition \ref{splitlink},
there is a sequence of at most $2(r-1)\exp^{(k^{n_1})}(k^{n_1})$ Reidemeister moves taking $D_1$ to $D_1'$,
and similar number taking $D_2$ to $D_2'$. The total number of moves is less than
$\exp^{(c^n)}(n)$. This concludes the proof of Theorem \ref{maintheorem}, assuming Theorem  \ref{mijatovicmodified}.

\section{Overview of the proof of Mijatovi\'c's theorem (Simple case)}
\label{sec:mijatovic-simple}

The driving force behind this paper is Theorem \ref{mijatovicmodified}, which is based on work of Mijatovi\'c.
As mentioned in the Introduction, Mijatovi\'c in fact proved the following related result \cite{alexknot}.

\begin{theorem} \label{mijatovichaken}
Let $M$ be a compact orientable irreducible 3-manifold, with
boundary a non-empty collection of tori.
Suppose that the closure of each component of the complement of the characteristic
submanifold of $M$ satisfies at least one of the following conditions:
\begin{itemize}
\item it does not fibre over the circle; or
\item it is not a surface semi-bundle; or
\item it has at least two boundary components.
\end{itemize}
Let $T_1$ and $T_2$ be two triangulations of $M$ with $t_1$ and $t_2$
3-simplices respectively.
Then there is a sequence of at most
$\exp^{( b^{t_1})}(t_1) + \exp^{( b^{t_2} )}( t_2)$
interior Pachner moves and boundary
Pachner moves, followed by an ambient isotopy, followed by a homeomorphism of $M$
to itself that is supported in the characteristic submanifold, that leaves each
component of $\partial M$ invariant, and that takes $T_1$ to $T_2$.
Here, $b \leq 2^{200}$.
\end{theorem}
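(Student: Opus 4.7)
The plan is to prove Theorem \ref{mijatovichaken} by reducing the comparison of two arbitrary triangulations $T_1,T_2$ to the comparison of each with a \emph{canonical triangulation} $\mathcal{T}_{\rm can}$ of $M$, and to bound separately the number of moves required for each reduction. If we can produce, from each triangulation $T_i$, a sequence of at most $\exp^{(b^{t_i})}(t_i)$ Pachner moves taking $T_i$ to something differing from $\mathcal{T}_{\rm can}$ by an ambient isotopy and a homeomorphism supported in the characteristic submanifold, the theorem follows by concatenating the sequence for $T_1$ with the reverse of the sequence for $T_2$. The construction of $\mathcal{T}_{\rm can}$ and the bound will be obtained by cutting $M$ along the JSJ tori and treating the resulting simple and Seifert fibred pieces separately.

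First I would establish the JSJ step: use normal surface theory to find a bounded-complexity collection of normal tori in $T_i$ representing the JSJ decomposition of $M$. By a standard crushing/cutting argument in the normal surface machinery of Jaco--Rubinstein, one can arrange, after at most a singly exponential number of Pachner moves, that these tori appear as subcomplexes. Cutting along them yields induced triangulations of each JSJ piece, and I only need to produce canonical triangulations of those pieces, then show the resulting triangulations of $M$ differ only by a homeomorphism supported in the characteristic submanifold (which is allowed by the statement).

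For each simple (atoroidal, acylindrical) piece $N$, I would build a canonical triangulation by running a hierarchy. Pick a bounded-complexity essential surface $S_1 \subset N$ (by the fibering/semi-bundle/two-boundary-components hypothesis, one can choose $S_1$ so that $N \setminus S_1$ is not itself a surface bundle over $S^1$, which is precisely the obstacle Mijatovi\'c must avoid to get uniqueness), arrange $S_1$ to be normal in the current triangulation via at most a tower-height-$t$ sequence of Pachner moves (this is the step that introduces the iterated-exponential factor), cut along $S_1$, and recurse. Finiteness of the hierarchy in a Haken manifold, combined with the hypothesis that ensures the hierarchy terminates at a collection of 3-balls in an essentially unique way, lets me define $\mathcal{T}_{\rm can}(N)$ canonically up to ambient isotopy. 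For each Seifert fibred piece I would instead use the Seifert fibration: after boundedly many Pachner moves, make the fibres appear as normal curves in the boundary tori and extend to a fibre-compatible triangulation determined by the invariants of the Seifert structure; non-uniqueness of the fibration on $T^2\times I$, solid tori, and twisted $I$-bundles is exactly what forces the characteristic-submanifold-supported homeomorphism in the conclusion.

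The main obstacle is quantitative: each time one ``normalises'' a surface relative to a given triangulation, the complexity can increase by an exponential factor, and the hierarchy has depth roughly linear in the complexity, yielding the tower $\exp^{(b^t)}(t)$. Careful bookkeeping through the hierarchy, exactly as in the papers \cite{alexs3,alexff,alexsf,alexknot}, is required to verify that the base $b$ of the tower can be taken to be $2^{200}$, and that the constants compose correctly across the JSJ pieces. The secondary subtlety is the gluing: after producing canonical triangulations of the pieces, they need not agree on the JSJ tori, but any two triangulations of a torus are related by a bounded number of Pachner moves together with a self-homeomorphism of the torus, and the latter extends over the Seifert fibred neighbour of the torus at the cost of a homeomorphism supported in the characteristic submanifold, which is precisely the slack built into the statement.
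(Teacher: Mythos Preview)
Your outline is essentially the approach the paper summarises (this theorem is Mijatovi\'c's, and Sections~\ref{sec:mijatovic-simple}--\ref{sec:mijatovic-sfs} give an overview rather than a new proof): build a canonical triangulation $\mathcal{T}_{\rm can}$ from the JSJ tori together with hierarchies in the simple pieces and fibration-adapted surfaces in the Seifert fibred pieces, then from an arbitrary $T_i$ repeatedly realise each surface as a bounded-weight normal surface and subdivide via Pachner moves until the full $2$-complex is simplicial.

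Two corrections are worth noting. First, the iterated exponential does \emph{not} arise from a single normalisation step costing a tower of height $t$; each step of realising $S_i$ normally and subdividing costs only $\exp^{(2)}$ in the current number of tetrahedra. The tower appears because the hierarchy has length roughly $2^{O(t)}$ (Mijatovi\'c's bound is $2^{160t}$), and iterating a doubly-exponential blow-up that many times produces $\exp^{(b^{t})}(t)$. Second, the homeomorphism supported in the characteristic submanifold enters not primarily from ambiguity in gluing triangulated pieces along the JSJ tori, but because in the Seifert fibred pieces the horizontal section $S_3$ cannot be isotoped to a normal surface of bounded weight in the presence of incompressible normal tori; one must compose with a Dehn-twist-type homeomorphism supported in the characteristic submanifold to reduce the weight. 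Your gluing concern is real but is handled by building $\mathcal{T}_{\rm can}$ globally from the $2$-complex $\partial M \cup S_1 \cup \cdots \cup S_n$ and coning, rather than by triangulating pieces separately and then matching on the tori.
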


There are two main differences between the statements of Theorems
\ref{mijatovichaken} and \ref{mijatovicmodified}.
Firstly, boundary Pachner moves are permitted in Theorem \ref{mijatovichaken}
but not in Theorem \ref{mijatovicmodified}.
Secondly, the homeomorphism of $M$ is not required to be the identity on the
boundary of $M$ in Theorem \ref{mijatovichaken}.
We will explain how to guarantee these extra requirements in Sections \ref{sec:canonical-tri} - \ref{sec:adjusting-boundaries}, 
following a summary of Mijatovi\'c's techniques in this and the next section.

We briefly recall what is meant by the \emph{characteristic submanifold} of $M$. We first consider the case where $M$ is compact, orientable and irreducible, and has boundary a (possibly empty) collection of incompressible tori. A torus properly embedded in $M$ is \emph{canonical} if it is essential and, moreover,
any other essential torus can be isotoped off it. If one takes one representative
for each ambient isotopy class of canonical torus, then these can be chosen to be
disjoint. The resulting collection of tori is the \emph{JSJ tori}, and their
union is well-defined up to ambient isotopy. A key result in the theory is that
if one cuts $M$ along an open regular neighbourhood of the JSJ tori, the resulting
pieces are either Seifert fibred or simple. The union of the Seifert fibred pieces
is the \emph{characteristic submanifold} of $M$.

When $M$ has boundary that is incompressible, but not a union of tori, then
one must vary the above definition. Here, $M$ is still assumed to be compact, orientable and irreducible. One considers essential annuli and tori properly
embedded in $M$, and again such a surface is \emph{canonical} if any other essential annulus
or torus can be isotoped off it. To form the \emph{JSJ annuli and tori}, one takes one 
representative of each isotopy class of canonical annulus and torus, but one then
discards certain annuli, called \emph{matching annuli}, which have Seifert fibred
spaces on both sides with matching Seifert fibrations. In this case, the JSJ annuli and tori divide $M$ into pieces that
are simple, Seifert fibred or an $I$-bundle over a surface, and the
\emph{characteristic submanifold} is the union of the Seifert fibred and
$I$-bundle pieces. For further details on JSJ decompositions of 3-manifolds, see  \cite{JS} and \cite{JSJ2}. See also \cite{JSJ3}.

The idea behind Theorem \ref{mijatovichaken} is as follows. As in Theorem \ref{mijatovicmodified},
a `canonical triangulation' ${\mathcal T}_{\rm can}$ for $M$ is constructed. However, this is slightly
different from the triangulation ${\mathcal T}_{\rm can}'$ in Theorem \ref{mijatovicmodified}. 
Like ${\mathcal T}_{\rm can}'$,
${\mathcal T}_{\rm can}$ depends on a given triangulation of $\partial M$. However, its restriction
to $\partial M$ does not equal this triangulation.

Mijatovi\'c's triangulation ${\mathcal T}_{\rm can}$ is constructed from a collection of surfaces
in $M$. These include the JSJ tori of $M$. In each component of the complement of the
JSJ tori, the choice of surfaces depends on whether that piece is
simple or Seifert fibred. Therefore, in this section, we will focus on the
case where $M$ is simple. In the next section, we will examine the case where $M$
is Seifert fibred. In Section \ref{sec:canonical-tri}, we will consider the 
general case.

Suppose therefore that $M$ is a compact orientable simple Haken 3-manifold
satisfying the conditions of Theorem \ref{mijatovichaken}.
A \emph{partial hierarchy} for $M$ is a sequence
$$M = M_0 \buildrel S_1 \over \rightsquigarrow M_1 \buildrel S_2 \over
\rightsquigarrow \dots \buildrel S_n \over \rightsquigarrow M_n,$$
where each $M_i$ is a 3-dimensional submanifold of $M$, each $S_i$
is a properly embedded incompressible surface in $M_{i-1}$, and $M_i$
is obtained from $M_{i-1}$ by cutting along $S_i$. A \emph{hierarchy}
is a partial hierarchy where the final manifold is a collection
of 3-balls. At each stage, one keeps track of a \emph{boundary pattern} $P_i$ for $M_i$. 
Here, we are using this term to mean a collection of disjoint
simple closed curves and graphs embedded in $\partial M_i$.
This boundary pattern is defined as follows.

One considers the union of the surfaces $S_1, \dots, S_i$
as a 2-complex in $M$, so that each $S_j$ has boundary that
runs over $S_1 \cup \dots \cup S_{j-1}$. Each surface $S_j$ is required to be in general
position with respect to the previous surfaces, in the sense
that its boundary is required to be transverse to the union of
the boundaries of the earlier surfaces. In general, one also
wants to ensure that when two parts of $\partial S_j$ intersect because they
lie on different sides of some previous surface, then they also are
required to be in general position. However, this situation will never
in fact arise in this paper, because we will always cut along separating
surfaces. The boundary pattern $P_i$ for $M_i$
is defined to be the image of $\partial S_1 \cup \dots \cup \partial S_i$
in $\partial M_i$. This boundary pattern is \emph{essential} which means
that $\partial M_i - P_i$ is incompressible in $M_i$.

Mijatovi\'c's canonical triangulation ${\mathcal T}_{\rm can}$ is constructed as follows.
The union of $\partial M$ and the surfaces $S_1, \dots, S_n$
is a 2-complex. Triangulate each face of this complex, by inserting a vertex in its interior,
and then coning off. The boundary of each 3-ball in $M_n$ then inherits a triangulation.
Triangulate this 3-ball by coning off this boundary triangulation.
Provided one uses the right hierarchy, the result is ${\mathcal T}_{\rm can}$.

Not any choice of hierarchy will work here. It is important that
whenever $M_{i-1}$ is given some triangulation, then $S_i$ can be realised
as a normal surface in that triangulation with an estimable number of triangles and squares. It is also important that there is a bound on the length $n$
of the hierarchy, in terms of the number of tetrahedra in any
given triangulation of $M$.
The reason for this will shortly become apparent.

To prove Theorem \ref{mijatovichaken} or \ref{mijatovicmodified}, one starts with a triangulation $T$ for $M$.
Let's also call this triangulation $T_0(M)$ and $T_0(M_0)$.
One realises $S_1$ as a normal surface in this triangulation, with control over the number
of triangles and squares. Then one applies Pachner moves to $T_0(M)$, creating a triangulation $T_1(M)$ in which
$S_1$ is a subcomplex. This restricts to a triangulation $T_1(M_1)$ of $M_1$. Then one repeats,
by realising $S_2$ as a normal surface in $T_1(M_1)$, and so on. At each stage,
the boundary pattern $P_i$ is required to be a subcomplex of the
triangulation $T_i(M_i)$. The final result is a triangulation $T_n(M_n)$ of $M_n$, 
the components of which patch together to form a triangulation of $T_n(M)$ of $M$ in which
$S_1 \cup \dots \cup S_{n}$ is a subcomplex. Now one applies
Pachner moves to $T_n(M)$ so that the triangulation on the 2-complex $S_1 \cup \dots \cup S_{n}$
agrees with that of ${\mathcal T}_{\rm can}$. Thus, on each component of $M_n$, we have two triangulations of
the 3-ball which agree on their boundaries. Then Mijatovi\'c applies an earlier result 
(Theorem 5.2 of \cite{alexsf}), which allows one to bound the number of interior
Pachner moves required to pass between two such triangulations of a 3-ball.

The precise choice of hierarchy made by Mijatovi\'c is
rather delicate. The surfaces that he uses are designed to
ensure that one can give an upper bound on the number of triangles and
squares of the normal surface $S_i$ in any given
triangulation of $M_{i-1}$, and also to provide a bound
on the length of the hierarchy. 

The first surface $S_1$ must be chosen with particular care.
In order that later stages of the hierarchy can be chosen
canonically, it is important that the exterior of $S_1$ is not a union of
$I$-bundles. In other words, $S_1$ must be neither a
fibre in a fibration of $M$ over the circle nor a fibre in
a surface semi-bundle. The hypotheses on $M$ in Theorem \ref{mijatovichaken} are
there to ensure that it is possible find a properly embedded
essential surface satisfying this condition. For when $M$
is a compact orientable irreducible atoroidal 3-manifold with
boundary a single incompressible torus, then a theorem of
Culler and Shalen \cite{cullshal} gives that $M$ contains
a properly embedded connected essential surface that is either separating
or closed. Thus, it is not a fibre. It may be a
semi-fibre, but our hypotheses on $M$ then ensure
that $M$ does not fibre over the circle and so one may
instead use a non-separating surface. When $M$ has
more than one boundary component and is not homeomorphic to $S^1 \times S^1 \times I$, 
one may use Culler and Shalen's theorem to find an essential
properly embedded surface that intersects at most one component of
$\partial M$. Hence, in this case, it is neither a fibre
nor a semi-fibre. See Corollary 3.2 in \cite{alexknot} for
more details. It is possible to find such a surface in
normal form, with a bound on its number of triangles and squares using
Propositions 4.1 and 4.2 of \cite{alexknot}.

At each of the later stages of the hierarchy, one of the
following types of surface is used: \begin{enumerate}
\item the boundary of a regular neighbourhood of a closed,
connected, properly embedded, $\pi_1$-injective surface of maximal Euler characteristic  in
some component of $M_{i-1}$ minus its characteristic submanifold (such a
surface is only used in the initial stages when the boundary pattern is
empty);
\item a canonical annulus in some component of $M_{i-1}$,
and which is also disjoint from the boundary pattern $P_{i-1}$
(here, canonical is defined in terms of the boundary pattern $P_{i-1}$);
\item an incompressible annulus (or two parallel copies of
such an annulus) in a component of $M_{i-1}$
that is either an $I$-bundle over a surface or a compression body, which
joins different boundary components, and which has minimal
intersection number with $P_{i-1}$ (such a surface is
only used when the component of $M_{i-1}$, with its boundary pattern, has
no canonical annuli and empty characteristic submanifold);
\item a meridian disc (or perhaps two parallel copies of
such a disc) for a handlebody component of $M_{i-1}$,
and which has minimal intersection number with $P_{i-1}$
(again such a surface is
only used when the component of $M_{i-1}$, with its boundary pattern, has
no canonical annuli and empty characteristic submanifold).
\end{enumerate}
We refer the reader to \cite{alexff} for the precise order in which
these surfaces are used. There, it is also shown that
$S_i$ may be realised as a normal surface in $T_{i-1}(M_{i-1})$
with bounded complexity, and a bound on the length of
the hierarchy is also given.

We now explain how one passes from the triangulation
$T_{i-1}(M)$ to the triangulation $T_i(M)$. The surface $S_i$ is a normal surface in $T_{i-1}(M_{i-1})$.
The triangulation $T_i(M)$ is chosen so that $S_i$ is simplicial in it. 
More precisely, $S_i$ intersects each tetrahedron of $T_{i-1}(M)$ in a collection of
triangles and squares. A vertex is inserted into each of these triangles and squares, and then the
triangle or square is coned off. The surface $S_i$ divides each face of $T_{i-1}(M)$
into discs. A vertex is inserted into each of these discs, and this too is coned off. Now, each tetrahedron of $T_{i-1}(M)$ is divided into balls by $S_i$. A vertex is
placed in the interior of each of these, and then the ball is coned off.
The resulting triangulation of $M$ is $T_i(M)$. Its restriction to $M_i$
is $T_i(M_i)$. 

We now give Mijatovi\'c's sequence of Pachner moves
which takes $T_{i-1}(M)$ to $T_i(M)$. This takes place in 6 steps:
\begin{enumerate}
\item Perform a $(1,3)$ boundary Pachner move on each triangle in $\partial M$,
by attaching a tetrahedron to it. Then, in a similar fashion, make a $(2,2)$ boundary Pachner move
for each 1-simplex of $T_{i-1}(M)$ in $\partial M$.
\item Add a vertex into each tetrahedron of $T_{i-1}(M)$ (but not the newly attached
tetrahedra from Step 1) by performing a $(1,4)$ Pachner move. Then add a vertex to each
triangle of $T_{i-1}(M)$ by performing a $(1,4)$ move on an adjacent tetrahedron and then
a $(2,3)$ move.
\item Subdivide the 1-skeleton of $T_{i-1}(M)$ so that it becomes a subcomplex
of $T_i(M)$, and keep the triangulation of the 3-simplices of $T_{i-1}(M)$ coned.
The precise details of how to do this are in Step 2 in Section 5 of \cite{alexs3}.
\item Subdivide the 2-skeleton of $T_{i-1}(M)$ to get a subcomplex of $T_i(M)$,
and keep the triangulation of the 3-simplices of $T_{i-1}(M)$ coned. The process
here is described in Lemma 4.2 of \cite{alexs3}.
\item Chop up the tetrahedra of $T_{i-1}(M)$ along the normal triangles and
squares of $S_i$ and triangulate the complementary regions by coning them from
points in their interiors. One uses Lemma 5.1 of \cite{alexs3} to do this.
\item Finally, remove the tetrahedra that are not contained in any of the
3-simplices of $T_{i-1}(M)$ by performing boundary Pachner moves.
\end{enumerate}
We will not need all the details of this process. But the following observation will
be important for us. The boundary Pachner moves that are used depend only on
the intersection between $\partial S_i$ and $\partial M$.  In fact, the way they
arise is precisely as follows:
\begin{enumerate}
\item A $(1,3)$ move is performed on each triangle of $T_{i-1}(M) \cap \partial M$, which attaches
a tetrahedron.
\item A $(2,2)$ move is performed on each edge of $T_{i-1}(M) \cap \partial M$, which again attaches
a tetrahedron.
\item The newly introduced tetrahedra are then modified using interior Pachner moves.
However, these moves are determined entirely by $\partial S_i \cap \partial M$.
\item Boundary Pachner moves are then performed which remove the
tetrahedra not included in $T_i(M)$.
Once again, these moves are determined entirely by $\partial S_i \cap \partial M$.
\end{enumerate}
We term the boundary Pachner moves arising
from the above procedure the \emph{specified sequence} of boundary
Pachner moves.

Note that no $(3,1)$ boundary Pachner moves that attach a tetrahedron were performed.

\section{Overview of the proof of Mijatovi\'c's theorem \break (Seifert fibred case)}
\label{sec:mijatovic-sfs}

In this section, we give an outline of the proof of Theorem \ref{mijatovichaken}
in the case where $M$ is Seifert fibred, following \cite{alexsf}.

In \cite{alexsf}, Mijatovi\'c also dealt with many closed Seifert fibre spaces,
but we will not do so here. We will assume (as stated in Theorem \ref{mijatovichaken}) that
$M$ has non-empty boundary.

We will not consider here the case where the base orbifold of the Seifert fibration
has zero Euler characteristic.
In other words, we will exclude the case where $M$ is homeomorphic to an $I$-bundle
over a torus or Klein bottle. These spaces required a separate argument in \cite{alexsf}
because they admit properly embedded essential annuli which cannot be
made vertical in the Seifert fibration after an ambient isotopy.

As in the simple case, the goal is to build the triangulation ${\mathcal T}_{\rm can}$ of $M$
from a collection of surfaces. However, these surfaces do not exactly form
a hierarchy. They are as follows:
\begin{enumerate}
\item The first surface $S_1$ is a union of properly embedded disjoint tori, which bound a collection of solid
tori that together form a regular neighbourhood of the singular fibres. The exterior
of these solid tori, which we denote by $M_-$, is a union of regular fibres and hence a
circle bundle over a surface.
\item The second surface $S_2$ is a union of meridian discs, one for each of
the solid tori from (1). At this stage, Mijatovi\'c gives the tori $S_1$ a certain triangulation,
arising from these meridian discs and from the regular fibres in the Seifert
fibration. We will not dwell on the details of this triangulation, because
we will follow a slightly different approach in our proof of Theorem \ref{mijatovicmodified}.
\item The third surface is a horizontal section $S_3$ of the circle bundle $M_-$. Now, $M_-$ may
contain many horizontal sections, even up to ambient isotopy. The section $S_3$
is chosen so that its boundary is normal in the given triangulation of
$\partial M_-$ and has least weight among all such normal simple closed curves.
\item The fourth surface $S_4$ is a maximal collection of disjoint non-parallel vertical annuli properly embedded 
in $M_-$, each of which intersects $S_3$ in a single arc.
\end{enumerate}

Thus, this fails to be a hierarchy for two reasons. Firstly, $S_1$ is compressible in
$M$. Secondly, $S_4$ is not properly embedded in the
exterior of $S_3$. Nevertheless, $S_1 \cup S_2 \cup S_3 \cup S_4$ 
is a 2-complex, and the complementary regions of this complex
are balls. To construct the triangulation ${\mathcal T}_{\rm can}$, first a vertex is introduced
into the interior of each face of the 2-complex, and this face is coned off. Then a vertex is placed in the interior of each complementary ball,
and this too is coned off.

Just as in this previous section, it is important that this 2-complex is 
constructible, given an arbitrary triangulation $T$ of $M$ that restricts
to the given triangulation $T_{\partial M}$ on $\partial M$. In the previous section, we realised the first
surface as a normal surface in $T$ with bounded weight (as a function of the number of tetrahedra in $T$), and then
we performed a sequence of Pachner moves to $T$, taking it to
a triangulation in which the surface is simplicial.
In the case here, it is not immediately clear that the tori $S_1$
can be placed into normal form in $T$, because they are compressible. Instead,
one must construct them in stages. The first step is to find a maximal
collection of disjoint non-parallel essential vertical tori in the Seifert fibration
that are in normal form with respect to $T$. Then Pachner moves are
performed on $T$, after which these tori are simplicial. These tori decompose $M$ into
pieces, each of which is Seifert fibred. There are a limited range of possibilities
for these pieces. The base orbifold may be a pair of pants containing no singularities;
an annulus containing one singular point; a Mobius band with no singularities; or a disc with
two singularities. In each case, the piece contains one or two properly embedded
essential vertical annuli, which decompose the piece into one or two solid tori.
A subset of these solid tori are regular neighbourhoods of the singular
fibres, and the boundaries of these solid tori are the required surface $S_1$.

The rest of the argument follows the lines of the
simple case fairly closely. One new feature that arises in the Seifert fibred case is that 
it is not possible to isotope the section $S_3$ to a normal surface with
bounded weight. Instead, one must also use homeomorphisms of $M$ that
are supported in the interior of $M$. This is because of the presence of
incompressible normal tori. It is for this reason that the conclusions of
Theorems \ref{mijatovichaken} and \ref{mijatovicmodified} make reference to
a homeomorphism supported in the characteristic submanifold of $M$.

Another new feature in the Seifert fibred case is that
$S_4$ is not properly embedded in the exterior of $S_1 \cup S_2 \cup S_3$.
Instead, it is properly embedded in the exterior of $S_1 \cup S_2$. Thus,
one must ensure that the horizontal section $S_3$ intersects each vertical
annulus of $S_4$ in a single arc. This is clearly possible topologically,
but one must be careful to ensure that it holds, while at the same time
making $S_3$ and $S_4$ normal surfaces with bounded complexity.

The procedure in the Seifert fibred case for taking the given triangulation
$T$ to the canonical triangulation $\mathcal{T}_{\rm can}$ is very similar to
that used in Section \ref{sec:mijatovic-simple}. In particular, the
sequence of boundary Pachner moves that is used depends only on the
intersection between the curves $\partial S_1$, $\partial S_2$, $\partial S_3$,
$\partial S_4$ and $\partial M$.

\section{The construction of the canonical triangulation}
\label{sec:canonical-tri}

In this section, we consider a general compact orientable Haken 3-manifold $M$, satisfying the
hypotheses of Theorem \ref{mijatovicmodified}. In particular,
it may have JSJ tori. We start with a given triangulation $T_{\partial M}$
for $\partial M$. The goal is to define the canonical triangulation ${\mathcal T}'_{\rm can}$
for $M$ which agrees with $T_{\partial M}$ on $\partial M$. However, our first
step is to build a slightly simpler triangulation ${\mathcal T}_{\rm can}$, which does not
agree with $T_{\partial M}$ on $\partial M$. We use the terminology ${\mathcal T}_{\rm can}$
because in the case where $M$ is simple or Seifert fibred, it agrees with
Mijatovi\'c's triangulations described in the previous two sections.

Again, the aim is to build a 2-complex in $M$, for which each complementary region
is a 3-ball. From this, ${\mathcal T}_{\rm can}$ is built,
by coning off each face of the complex, and then coning off each complementary ball.
The 2-complex is, as in the previous sections, constructed from a collection of surfaces,
in a number of steps.

\vskip 6pt

\noindent {\bf Step 1.}
The first surface $S_1$ is built from the JSJ tori, which divide the manifold into
simple and Seifert fibred pieces. When a JSJ torus has simple pieces on both sides
(possibly the same simple piece), two parallel copies of the torus are used in $S_1$. Similarly,
when a JSJ torus has Seifert fibred pieces on both sides, two parallel copies of it are used.
However, when a torus has a simple piece on one side and a Seifert fibred piece on the
other, only one copy of the torus is used.

\vskip 6pt

\noindent {\bf Step 2.}
The next surfaces are hierarchies for the simple pieces, as described in 
Section \ref{sec:mijatovic-simple}.

\vskip 6pt

\noindent {\bf Step 3.}
When a JSJ torus has simple pieces on both sides, we have taken two copies of the torus.
Between these lies a region homeomorphic to $T^2 \times I$. The boundary of this region
has inherited a boundary pattern from the hierarchies in the adjacent pieces.
The next surface is a vertical annulus in each $T^2 \times I$ region, which intersects the boundary
pattern transversely and in as few points as possible. This cuts
$T^2 \times I$ into a solid torus. The next surface is a meridian disc for each such
solid torus, which again intersects the boundary pattern transversely and minimally.

\vskip 6pt

\noindent {\bf Step 4.}
We now turn to the JSJ tori that have Seifert fibred pieces on both sides.
We have taken two copies of each such torus, which bound a region homeomorphic to $T^2 \times I$.
We insert two vertical annuli into $T^2 \times I$, with respective slopes those of the regular fibres in the adjacent Seifert fibred pieces. We arrange for these
annuli to intersect transversely and minimally.

\vskip 6pt

\noindent {\bf Step 5.}
We now tackle the Seifert fibred pieces. For each such piece, its boundary components which do not lie
in the boundary of $M$ have inherited some boundary pattern,
which decompose the tori into discs. We now use a similar sequence of surfaces to that described in Section 9. At one stage, a slight
variant of the procedure is required. Recall that
in Step 3 in Section \ref{sec:mijatovic-sfs},
a section $S_3$ for the circle bundle $M_-$ is chosen, and this is required to have
minimal intersection number with the edges of the given triangulation of $\partial M_-$.
This triangulation of $\partial M_-$ was constructed in Step 2 of Section \ref{sec:mijatovic-sfs}. For those components
of $\partial M_-$ that are actually components of $\partial M$, they are just assigned
the given triangulation, which is the restriction of $T_{\partial M}$. 
For those components of $\partial M_-$ which bound
a solid torus neighbourhood of a singular fibre, a certain triangulation was constructed
using the regular fibres and a meridian disc of the solid torus. However, in our
case, we pursue a slightly different approach. We have already assigned a boundary pattern
to each component of $\partial M_- - \partial M$, which fills the surface. We pick the section
$S_3$ to have minimal intersection number with the union of this boundary pattern
and the 1-skeleton of $\partial M$.
In fact, the whole purpose of Step 4 above was to ensure that every component of
$\partial M_- - \partial M$ picks up a boundary pattern that chops up the torus into discs.

\vskip 6pt

The union of the above surfaces with $\partial M$ is a 2-complex. Once again, we
form ${\mathcal T}_{\rm can}$ from this 2-complex, by coning its faces and then
the complementary 3-balls.

Now, the restriction of ${\mathcal T}_{\rm can}$ to $\partial M$ clearly does not equal
$T_{\partial M}$. We now fix this, by introducing ${\mathcal T}'_{\rm can}$, which is the
canonical triangulation required by Theorem \ref{mijatovicmodified}.

Let $S_1, \dots, S_n$ be the above sequence of surfaces, described in Steps 1 - 5.
For $0 \leq i \leq n$, we will keep track of a triangulation $T_i(\partial M)$ for $\partial M$, in which
$\partial M \cap (S_1 \cup \dots \cup S_{i})$ is simplicial.
The initial triangulation $T_0(\partial M)$ will be the given triangulation
$T_{\partial M}$. 
Each surface $S_i$ intersects $\partial M$ in a collection of disjoint simple closed curves and arcs.
We realise these as normal curves and arcs in the triangulation $T_{i-1}(\partial M)$,
with the property that they intersect the 1-simplices in as few points as possible.
The new triangulation $T_i(\partial M)$ is chosen as follows. Each triangular face
of $T_{i-1}(\partial M)$ is divided up by arcs which are subsets of $\partial S_i$.
We declare that these arcs are 1-simplices in $T_i(\partial M)$. The regions in the complement of these arcs are discs.
A vertex is inserted into each such disc, and then the disc is coned off.
The result is $T_i(\partial M)$.

At this stage we make a specific choice of 2-dimensional Pachner
moves taking $T_{i-1}(\partial M)$ to $T_i(\partial M)$, as follows. 
A $(1,3)$ move is performed on each triangle of $T_{i-1}(\partial M)$.
Then, at each of the original edges $e$ of $T_{i-1}(\partial M)$, a (2,2)
move is performed. Then, at each such edge $e$, an alternating sequence
of (1,3) and (2,2) moves is performed. The number
of (1,3) moves is equal to the number of points of
intersection between $\partial S_i$ and $e$. The result of this is that
each of the original triangles of $T_{i-1}(\partial M)$ has
been transformed into a cone. (See Figure \ref{retriangulateboundary}.)  The triangulation
of this cone is now modified without changing its boundary
any further, using 2-dimensional Pachner moves, so that it becomes
$T_i(\partial M)$. We term this the \emph{specified
sequence} of Pachner moves taking $T_{i-1}(\partial M)$ to $T_i(\partial M)$.
In Section \ref{sec:mijatovic-simple},
a sequence of boundary Pachner moves on $M$, also called the \emph{specified sequence},
was defined. Note that the specified sequence of boundary Pachner moves
given in Section \ref{sec:mijatovic-simple} induces the specified sequence
of 2-dimensional Pachner moves defined here.


\begin{figure}[h]
\centering
\includegraphics{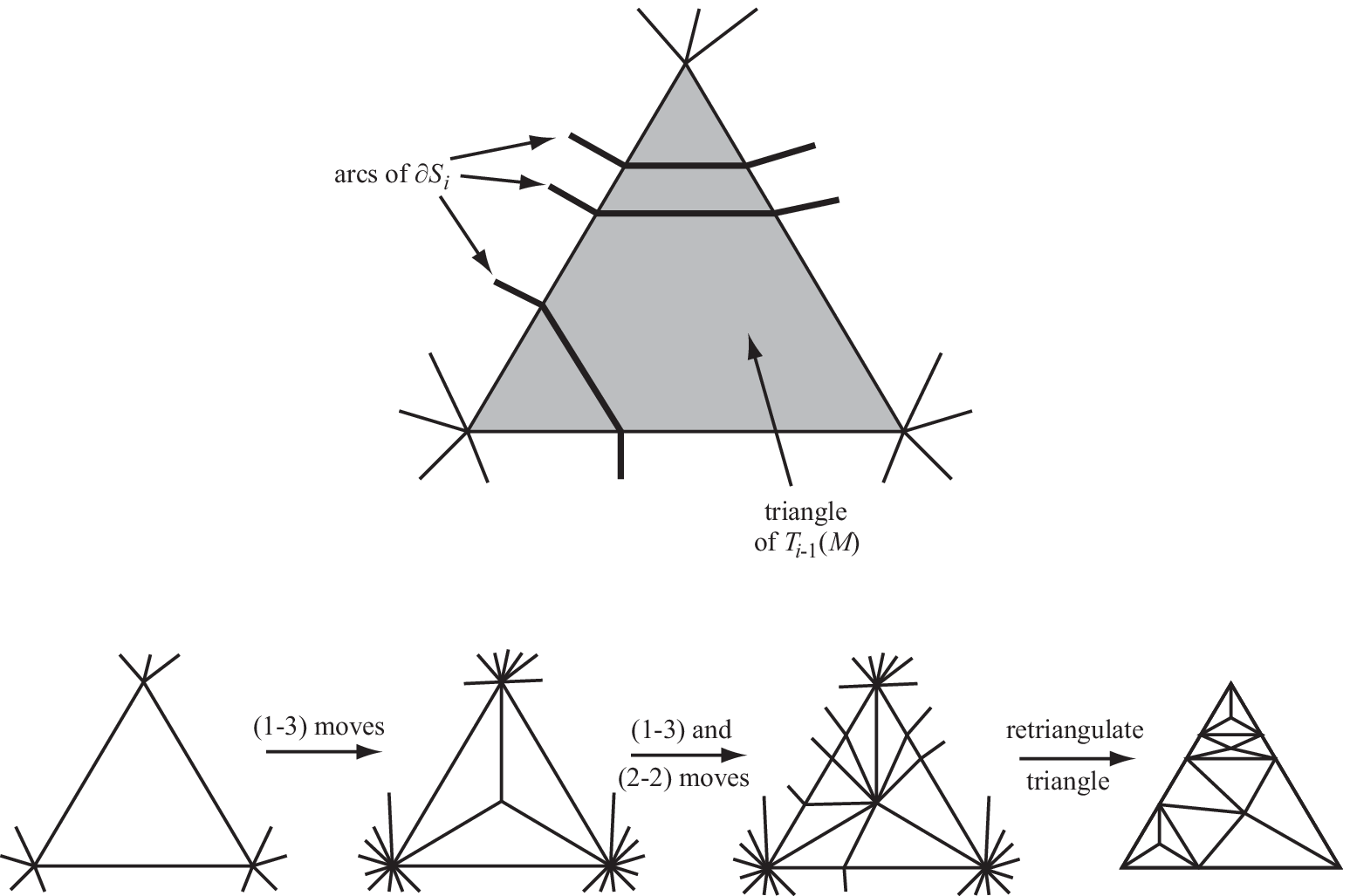}
\caption{} \label{retriangulateboundary}
\end{figure}

Associated to any sequence of 2-dimensional Pachner moves on a triangulated surface $F$,
there is a 3-dimensional space constructed as follows. One starts with $F$. Each
time a 2-dimensional move is performed, a 3-simplex is attached onto one side of $F$. 
Call this the `top' side. This top side is a copy of $F$ with its new triangulation. After a sequence
of these moves, the resulting space is approximately a copy of $F \times I$. The bottom
has the original triangulation of $F$, and the top has the new triangulation.
We say `approximately' $F \times I$, because this space need not in fact
be a 3-manifold. This is because it is possible that some simplices of $F$ are
left untouched by the sequence of 2-dimensional Pachner moves, in which
case these simplices lie in both the top and bottom copy of $F$.
We call this space a \emph{generalised product}. When $F$ is the boundary of 
a 3-manifold, we also call this space a \emph{generalised collar}.

We have defined above a specified sequence of 2-dimensional Pachner moves which
takes $T_{\partial M}$ to $T_n(\partial M)$. Associated with this sequence, there is
a generalised product. Now $T_n(\partial M)$ is the boundary triangulation
of ${\mathcal T}_{\rm can}$. Thus, we can attach the generalised product
to ${\mathcal T}_{\rm can}$, to create a triangulation of $M$ with boundary
triangulation $T_{\partial M}$. This is ${\mathcal T}_{\rm can}'$, 
the canonical triangulation for $M$.

\section{From boundary Pachner moves to interior ones} \label{sec:from-boundary-to-interior-moves}

Let $M$ be a compact orientable 3-manifold with a triangulation $T$.
Suppose that we are given a sequence of Pachner moves, starting with $T = T_0$
and ending with a triangulation $T_n$. We want to use this sequence to specify
a sequence of interior Pachner moves to $M$, giving triangulations $T = T_0', \dots, T_n'$
of $M$. At each stage, the triangulation $T_i'$ of $M$ will contain a triangulated
generalised collar on $\partial M$. If we form the closure of the complement
of this generalised collar, the result will be the triangulation $T_i$.

We say that such a sequence of interior Pachner moves and generalised collars are
\emph{associated} with the given sequence of Pachner moves.

The case we have in mind is where $T_n = \mathcal{T}_{\rm can}$.
A sequence of Pachner moves taking $T$ to $T_n = \mathcal{T}_{\rm can}$
arises from the proof of Theorem \ref{mijatovichaken}. The associated sequence
of interior Pachner moves will take $T$ to $T_n' = \mathcal{T}_{\rm can}'$,
and will be the moves required by Theorem \ref{mijatovicmodified}.

We start with the motivating case, where $T_i$ is obtained from $T_{i-1}$ by
a boundary Pachner move which attaches a simplex. Suppose, for simplicity, that
this has the effect of performing a $(1,3)$ move on the boundary. Now, embedded within
$T_{i-1}'$ is a copy of $T_{i-1}$. The boundary Pachner move attaches on a single
tetrahedron to a triangle in $T_{i-1}$. A copy of this triangle is in $T_{i-1}'$,
lying between $T_{i-1}$ and the generalised collar. What we do is insert two tetrahedra
into $T_{i-1}'$ at the location of this triangle. These two tetrahedra are glued to each other along three faces.
The tetrahedron that is adjacent to the generalised collar is included in the
generalised collar of $T_i'$. This procedure which inserts the two tetrahedra can
clearly be achieved by interior Pachner moves. One simply performs a $(1,4)$ move
on the tetrahedron of $T_{i-1}'$ which is adjacent to the triangle and which
is not part of the generalised collar. Then one performs a $(3,2)$ move.
This is illustrated in Figure \ref{bpach1-3}.

\begin{figure}
\centering
\includegraphics[width=\textwidth]{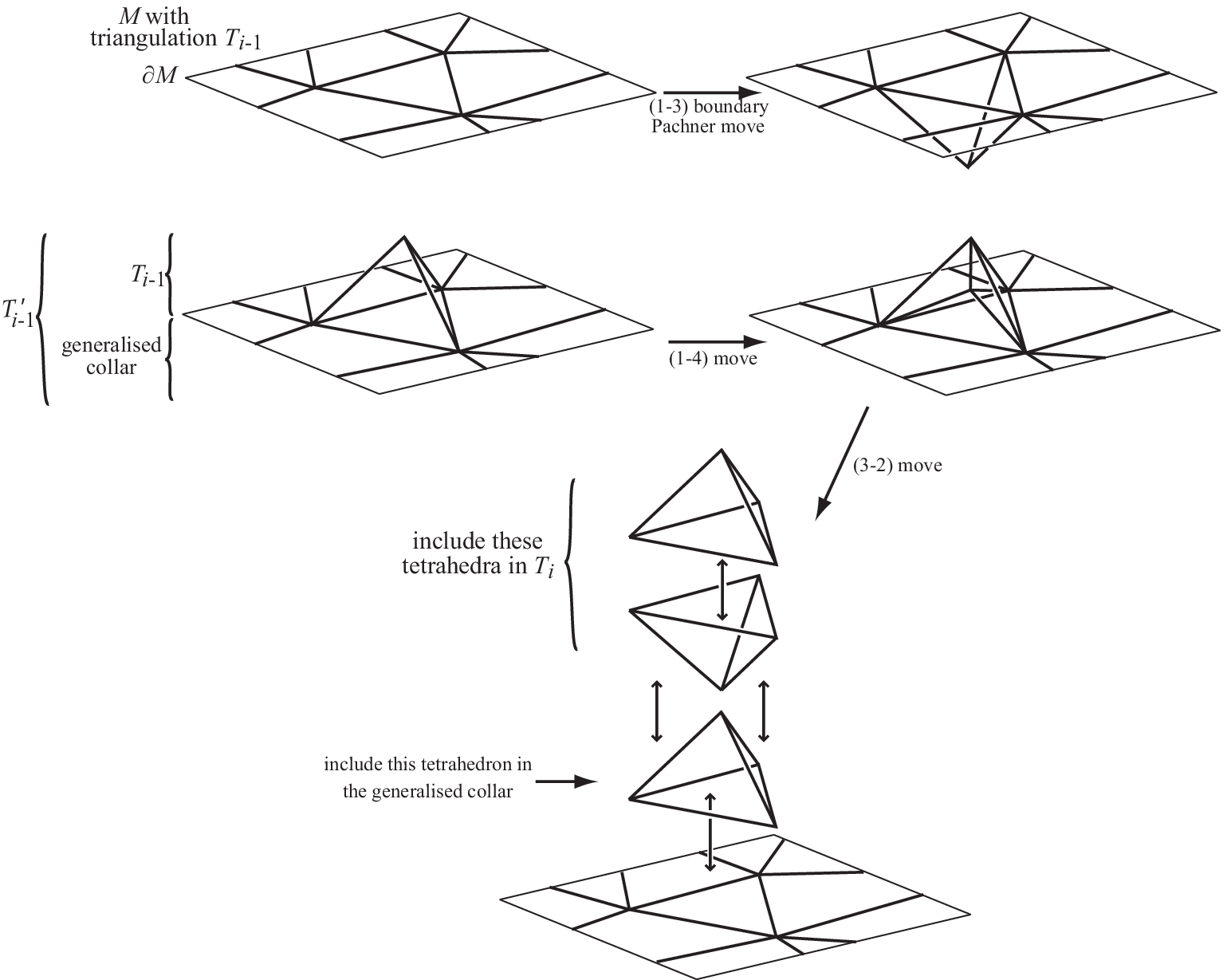}
\caption{} \label{bpach1-3}
\end{figure}

We now deal with some easy cases.

When an interior Pachner move is performed on $T_{i-1}$, we do the same
move to $T_{i-1}'$ and do not change the generalised collar.

When a boundary Pachner move is performed on $T_{i-1}$, and this removes
a tetrahedron, then we do not perform a Pachner move to $T_{i-1}'$. Instead, we
simply enlarge its generalised collar to include the relevant tetrahedron.

There are still two cases to consider. The triangulation $T_i$ may be obtained
from $T_{i-1}$ by a boundary Pachner move that adds a tetrahedron, and which
performs a $(2,2)$ move or a $(3,1)$ move on the boundary. We will not in fact
consider the $(3,1)$ case, because it is more complicated, and we will not need it
in this paper. So, suppose that a $(2,2)$ move is performed on the boundary of $T_{i-1}$.
Two triangles are involved in this move, and there are two corresponding
triangles in $T_{i-1}'$. These triangles are glued along an edge, and their
union is a disc (possibly with identifications along its boundary). 
What we do is `blow air' into this disc, creating two copies of the
disc. In the space between these two discs, we insert two tetrahedra. These two
tetrahedra are glued to each other along two triangles. The tetrahedron that is
adjacent to the generalised collar of $T_{i-1}'$ is included in the generalised
collar of $T_i'$. The other new tetrahedron is included in the copy of $T_i$
in $T_i'$.

We  need to explain how the insertion of these two tetrahedra, which takes
$T_{i-1}'$ to $T_i'$, can be achieved using interior Pachner moves. First apply a $(1,4)$ to every tetrahedron of $T_{i-1}'$. After this, the tetrahedra incident to the interior of any face are distinct.
Consider the two triangles involved in the boundary Pachner move. These two triangles are
adjacent to distinct tetrahedra because of the $(1,4)$ moves we have just applied. Consider these two tetrahedra. 
Apply some $(2,3)$ moves until they are adjacent. Now apply two $(2,3)$ moves in 
the resulting adjacent tetrahedra to insert the two tetrahedra in the place of 
the two triangles that get split open. This process is illustrated in Figure \ref{insert22}. 
Now apply $(3,2)$ moves and $(4,1)$ moves to undo the initial moves.
Note that if $T_i$ contains $t$ tetrahedra, then the number of interior Pachner moves
in this process is at most $4t$.

\begin{figure}
\centering
\includegraphics[width=0.7\textwidth]{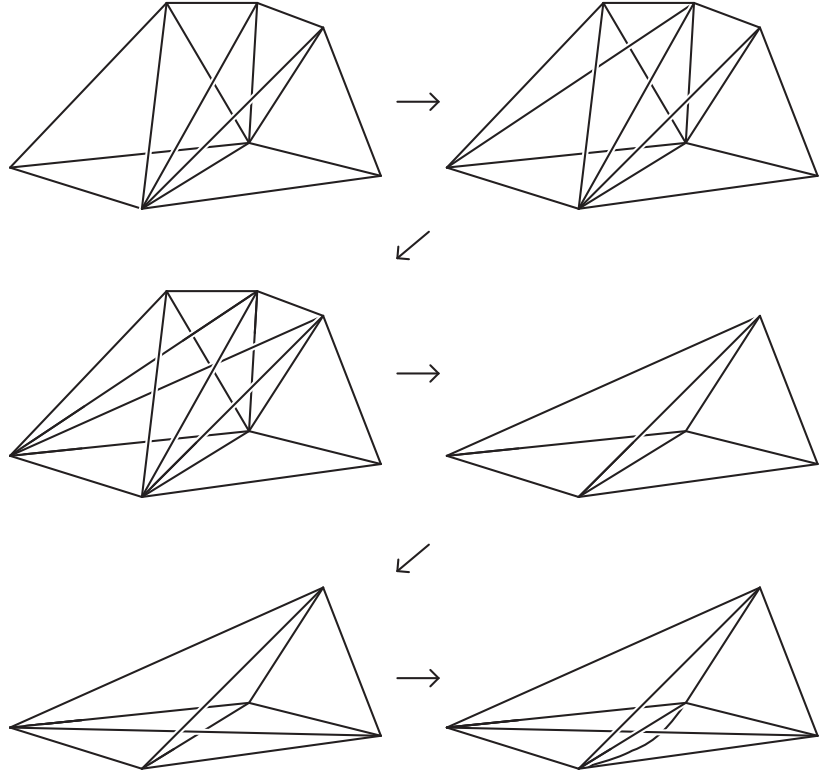}
\caption{A sequence of $(2,3)$ moves followed by a $(3,2)$ move} \label{insert22}
\end{figure}

We close with an important observation. The generalised collar constructed above,
which is a subset of $T_n'$, only depends on the boundary Pachner moves in the
original sequence.

Suppose that $T$ is a triangulation of $M$, and that its restriction to $\partial M$
is the given triangulation $T_{\partial M}$. Suppose that we have found a sequence
of Pachner moves which takes $T$ to ${\mathcal T}_{\rm can}$. Suppose also that the boundary Pachner moves in this sequence
precisely induce the specified sequence of 2-dimensional Pachner moves
on $\partial M$, defined in Section \ref{sec:canonical-tri}. Then we deduce that the associated
sequence of interior moves takes $T$ to ${\mathcal T}_{\rm can}'$. This important observation
will be crucial in our proof of Theorem \ref{mijatovicmodified}, which we now come to.

\section{Proof of Theorem \ref{mijatovicmodified}} \label{sec:proof-main}

We are given a triangulation $T$ for $\partial M$, and we need to use interior Pachner
moves, plus possibly a homeomorphism supported in the interior of $M$,
to take $T$ to ${\mathcal T}_{\rm can}'$. This will be achieved by performing
a sequence of Pachner moves that takes $T$ to ${\mathcal T}_{\rm can}$,
and so that the boundary Pachner moves in this sequence precisely induce the
specified sequence in Section \ref{sec:canonical-tri}. Then, as we observed
at the end of the previous section, the associated sequence of interior Pachner moves will take $T$ to
${\mathcal T}_{\rm can}'$, as required.

In Mijatovi\'c's proof and Section \ref{sec:mijatovic-simple}, a sequence of
Pachner moves was given, which takes $T$ to ${\mathcal T}_{\rm can}$.
But we need to ensure that the boundary Pachner moves in this section are
the specified sequence from Section  \ref{sec:canonical-tri}.

Let $S_1, \dots, S_n$ be the sequence of surfaces used in the definition of
${\mathcal T}_{\rm can}$. Suppose that we have subdivided the triangulation
$T$ to $T_{i-1}(M)$ using Pachner moves. Then $S_1, \dots, S_{i-1}$ are
simplicial in $T_{i-1}(M)$, and so $T_{i-1}(M)$ restricts to a triangulation
$T_{i-1}(M_{i-1})$ of $M_{i-1}$. We want to realise $S_i$ as
a normal surface in $T_{i-1}(M_{i-1})$ with bounded complexity.
The precise result that we use depends on what type of surface $S_i$ is.
For example, it may be the JSJ tori, or it may be a surface where $-\chi(S_i)$
is minimised, or it may be one of several other possibilities.
However, in each case, there is an ambient isotopy taking $S_i$ to a normal
surface with at most $2^{350t^2}$ triangles and squares,
where $t$ is the number of tetrahedra of $T_{i-1}(M_{i-1})$.
(See Lemma 4.5 in \cite{alexff}.)

However, there is a complication. In Section \ref{sec:canonical-tri}, we have
already specified the intersection between $S_i$ and $\partial M$. Recall that
we have already chosen the sequence of triangulations 
$T_{\partial M} = T_0(\partial M), T_1(\partial M), \dots, T_n(\partial M)$, where
the restriction of $T_{i-1}(M)$
to $\partial M$ is $T_{i-1}(\partial M)$. The simple closed curves and arcs
$S_i \cap \partial M$ need to be the specified normal arcs in $T_{i-1}(\partial M)$.
However, when the usual normalisation procedure is applied to $S_i$ in $T_{i-1}(M_{i-1})$, 
$\partial S_{i}$ may need to be moved.
We therefore require the following result, which we will prove in Section
\ref{sec:adjusting-boundaries}.



\begin{theorem}\label{fixnormal}
Let $M$ be a compact orientable irreducible 3-manifold with an essential boundary pattern $P$.
Let $T$ be a triangulation of $M$, in which $P$ is simplicial, and which
consists of $t$ tetrahedra. Let $F$ be a normal surface properly embedded in $M$, essential with respect to $P$, consisting of $n$ normal discs. Let $X$ be the closure of the union of some non-adjacent components of $\partial M - P$. Suppose that $\partial F \cap X$ is ambient isotopic to a collection of normal curves and arcs $c$ in $X$, and that $c$ has
minimal weight in its ambient isotopy class in $X$. Then $F$ is ambient isotopic in $M$ to a normal surface $F'$ whose boundary agrees with $c$ in $X$ and which consists of at most $8000n^4t^2$ normal discs.
\end{theorem}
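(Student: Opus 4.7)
The plan is to construct the desired ambient isotopy in two stages: first, push the boundary $\partial F \cap X$ along $X$ until it coincides with $c$; second, renormalise the resulting surface while keeping its boundary on $X$ fixed. Since the components of $X$ are, by hypothesis, non-adjacent in $\partial M - P$, the manipulations in distinct components can be carried out independently, so I reduce to the case where $X$ is a single component. I begin with a weight bound: each normal disc of $F$ meets $\partial M$ in at most four edges, so $w(\partial F \cap X) \leq 4n$, and the minimality of $c$ in its isotopy class in $X$ gives $w(c) \leq 4n$ as well.

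Next I realise the isotopy $\partial F \cap X \rightsquigarrow c$ inside the triangulated surface $X$ as a sequence of elementary moves on normal curve/arc systems --- bigon compressions and triangle pushes across single triangles of the induced triangulation of $X$. Using standard combinatorics of minimum-weight normal curve systems, together with the bounds $w(\partial F \cap X),\,w(c) = O(n)$ and the fact that $X$ has at most $4t$ triangles, this sequence can be chosen of length at most $O(n^2 t)$. Each elementary move in $X$ is then extended to an ambient isotopy of $M$ supported in the union of the tetrahedra of $T$ incident to a single triangle of $X$. After each extended move, $F$ may fail to be normal in those tetrahedra, but it can be put back into normal form by a purely local adjustment, increasing the total number of normal discs by at most $O(nt)$ per move.

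Multiplying the bound $O(n^2 t)$ on the number of moves by the $O(nt)$ per-step disc increase, adding the initial count $n$, and absorbing all constants gives the stated bound $8000 n^4 t^2$. The main obstacle I anticipate is controlling the local renormalisation after each boundary push. Three ingredients make this work: $F$ is essential with respect to $P$, so no destabilising compression can be forced on us; $c$ has minimum weight in its isotopy class in $X$, which rules out essential boundary compressions that could move the boundary away from $c$; and each move is localised to the small collection of tetrahedra incident to one triangle of $X$. The technical heart of the argument is a finite case analysis showing how the normal disc pattern in such a tetrahedron reconnects after a boundary push, giving the $O(nt)$ per-move bound uniformly in the current state of $F$ (so that the errors add rather than compound), and thereby yielding the polynomial bound in the conclusion.
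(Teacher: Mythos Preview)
Your two-stage plan matches the paper's, and the reduction of the boundary isotopy in $X$ to elementary bigon/triangle pushes is essentially what the paper does (via Lemma~3.1 of \cite{intcurve} applied to the double of $X$). The gap is in the repair step. You propose to renormalise $F$ \emph{locally} after each boundary push and to track the normal disc count throughout; but ``local renormalisation'' is not clearly well-defined --- straightening the non-normal pieces created in one tetrahedron incident to $X$ can force changes in its neighbours, which can propagate --- and, more seriously, for the errors to add rather than compound your $O(nt)$ per-move increment must be uniform in the \emph{current} state of $F$, not in the original $n$. You flag this yourself, but the deferred ``finite case analysis'' is precisely the crux, and it is not clear how to carry it out: after several pushes the surface may already have far more than $n$ discs in the tetrahedra near $X$, and the next push acts on that enlarged configuration.

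The paper sidesteps both difficulties by changing the controlled quantity. It performs all the boundary slides first, in a collar $X \times I$, \emph{without} renormalising, and tracks only the \emph{edge degree} of $F$ (its total intersection with the $1$-skeleton of $T$). Each slide across a bigon, triangle, disc or annulus in $X$ increases the edge degree by at most $12t+4n$, independently of the current surface; summing over at most $16n^2+4n$ such slides per component of $\partial F\cap X$ and over at most $4n$ components gives edge degree at most $3844\,n^4 t$. Then the standard normalisation procedure (Matveev, Theorem~3.3.21) is applied once at the end: it does not increase edge degree, it fixes $\partial F\cap X$ because $c$ has minimal weight in $X$, and it preserves the isotopy class because $F$ is essential with respect to $P$. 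Since every edge has valence at most $6t$, the resulting normal surface has at most $(6t/3)\cdot 3844\,n^4 t<8000\,n^4 t^2$ discs. Switching from disc count to edge degree, and postponing normalisation to a single global step, is what makes the bound additive.
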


Recall that $\partial S_i$ has been chosen so that its intersection with $\partial M$ minimizes weight in its ambient isotopy class. Let $c$ be $\partial S_i \cap \partial M$.
Now, according to Mijatovi\'c's arguments, $S_i$ can be isotoped in $M_i$ to
a normal surface with a bounded number of normal discs, $n$ say. This isotopy
may move $\partial S_i$. By the above result,
we may find a normal surface isotopic to $S_i$, with boundary that agrees with $S_i$ on $\partial M$,
and with at most $8000n^4t^2$ normal discs, where $t$ is the number of tetrahedra in $T_{i-1}(M_{i-1})$.

We now apply Pachner moves, as described in Section \ref{sec:mijatovic-simple},
taking $T_{i-1}(M)$ to $T_i(M)$. As explained in Section \ref{sec:mijatovic-simple},
the boundary Pachner moves that are used only depend on
the intersection between $\partial M$ and $S_{i}$. And we have ensured that these
are fixed as in Section \ref{sec:canonical-tri}. Thus, the boundary
Pachner moves precisely induce the specified sequence of 2-dimensional Pachner moves on $\partial M$ defined in Section 10. Hence, the
associated sequence of interior moves creates ${\mathcal T}_{\rm can}'$, as required.

We close this section by bounding the number of Pachner moves used.
To provide such a bound, it suffices to bound the number of moves required to
pass from $T_{i-1}'(M)$ to $T_i'(M)$, and also to bound the number $n$, where
$S_1, \dots, S_n$ is the sequence of surfaces used to define ${\mathcal T}_{\rm can}$. 
We start with the former estimate.

Suppose that $T_{i-1}(M)$ has $t$ tetrahedra. In \cite{alexff}, Mijatovi\'c argues that
at most $\exp^{(2)}(t)$ Pachner moves are required to pass to the next triangulation.
However, as we have seen, his triangulations and ours are a little different. Nevertheless,
we will now show that we also can pass from $T_{i-1}'(M)$ to $T_i'(M)$ 
using at most $\exp^{(2)}(t)$ interior Pachner moves.

Let us focus on the case where $S_i$ is a surface from Step 2. In other words,
suppose that it is part of a hierarchy in one of the simple pieces. 
Proposition 4.2 of \cite{alexknot} or Lemma 4.5 of \cite{alexff}
gives that $S_i$ can be realised as a normal surface in $T_{i-1}(M_{i-1})$
with at most $2^{350t^2}$ triangles and squares. However, because the intersection
$S_i \cap \partial M$ is specified in advance, then in fact we only get a bound of
at most $8000 \ 2^{1400t^2} t^2$ triangles and squares, using Theorem \ref{fixnormal}.
Then using Lemma 4.1 in \cite{alexsf}, there is a sequence of at most
$160000 \ 2^{1400t^2} t^3$ Pachner moves taking $T_{i-1}(M)$ to $T_i(M)$, 
in which $S_i$ is simplicial. However, boundary Pachner moves may be
used in this process, and so we need to bound the number of interior Pachner
moves required in the associated sequence that takes $T_{i-1}'(M)$ to
$T_i'(M)$. The boundary Pachner moves in this sequence are precisely 
those from the specified sequence described in Section \ref{sec:mijatovic-simple}. 
Recall that first a boundary $(1,3)$ move is performed on each triangle of
$T_{i-1}(M) \cap \partial M$. The number of such moves is at most $4t$, and hence the number of
interior moves in the associated sequence is at most $8t$.
Then a $(2,2)$ move is performed along each edge of $T_{i-1}(M) \cap \partial M$.
There are at most $6t$ such edges.
Using the bound at the end of Section \ref{sec:from-boundary-to-interior-moves},
the number of interior Pachner moves in the associated sequence is at
most $6t(4t)$. After this, the only boundary Pachner moves that are
performed are those that remove tetrahedra, and these do not create
any interior moves in the associated sequence. So, the number of interior Pachner moves
taking $T_{i-1}'(M)$ to $T_i'(M)$ is at most
$$8t + 6t(4t) + 160000 \ 2^{1400t^2} t^3 \leq \exp^{(2)}(t),$$
as claimed. Note that the inequality holds because the values of $t$ for which we are making this estimate are sufficiently large. The number of tetrahedra in $T_i'(M)$ is also at most
$\exp^{(2)}(t)$.

An easy induction gives that, for $i \geq 1$, the number of tetrahedra
in $T_i'(M)$ is at most $\exp^{(2i)}(t)$, and that the number of Pachner
moves required to take $T_{i-1}'(M)$ to $T_i'(M)$ is also at most
$\exp^{(2i)}(t)$.

Thus, all we need to do now is bound the number $n$ of surfaces in the sequence $S_1, \dots, S_n$.
In Step 1, only one surface (which may be disconnected) is used. The number of surfaces in
Step 2 is at most $2^{160t}$, by a bound of Mijatovi\'c in Section 5 of \cite{alexff}.
In Step 3, two surfaces are used. In Step 4, two surfaces are also used.  The final step
is Step 5, which deals with the Seifert fibred pieces. Only four surfaces are used, but
in fact the first surface should be counted as two, because it is built in two stages.
So, we can certainly take $n \leq 2^{161t}$.

Thus, the number of interior Pachner moves taking $T$ to ${\mathcal T}_{\rm can}'$ is
at most $\exp^{(a^t)}(t)$, where $a = 2^{162}$. This proves Theorem \ref{mijatovicmodified}.

\section{Adjusting boundaries of normal surfaces}\label{sec:adjusting-boundaries}

In this section we prove Theorem \ref{fixnormal}. Our strategy will be to isotope $F$ so that its boundary is in the correct place and has controlled edge degree. Then we shall apply to the resulting surface a standard normalization procedure that doesn't increase edge degree and which, by the hypotheses on $c$, does not affect the intersection with $X$. The bound on the edge degree of this surface will yield the required bound on the number of normal discs in $F'$.

Observe that $\partial F \cap X$ certainly consists of at most $4n$ normal arcs. Hence $c$ consists of at most $4n$ normal arcs. Without loss of generality pick $c$ so that every normal arc of $c$ intersects each normal arc of $\partial F$ in at most one point. Then the number of intersection points between $\partial F$ and $c$ is at most $(4n)^2=16n^2$. Label the components of $\partial F \cap X$ as $\{\alpha_1,\ldots,\alpha_m\}$ and the components of $c$ as $\{\beta_1,\ldots,\beta_m\}$ so that there is an ambient isotopy of $X$ that takes $\alpha_i$ to $\beta_i$ for each $i$. Consider $\alpha_1$ and $\beta_1$. These intersect in at most $16n^2$ points. We will now isotope $F$ about in a small collar neighborhood of $X$ to obtain a new surface where the image of $\alpha_1$ is equal to $\beta_1$. This is achieved as follows. Note that for convenience we will refer to the surfaces obtained from $F$ by our isotopies as $F$, with the same notation for the components of $\partial F \cap X$.
 By Lemma 3.1 of \cite{intcurve} applied to the double of $X$ along its boundary, either $\alpha_1$ and $\beta_1$ are disjoint, or there is a bigon of $\alpha_1 \cup \beta_1$ in the interior of $X$ that bounds a disc $B$ whose boundary consists of a subarc of $\alpha_1$ and a subarc of $\beta_1$ and whose interior is disjoint from $\alpha_1 \cup \beta_1$, or there is a triangle, $B'$, in $X$ whose boundary consists of a subarc of $\partial X$,  a subarc of $\alpha_1$ and a subarc of $\beta_1$  and whose interior is disjoint from $\alpha_1 \cup \beta_1$. In the latter two cases, we would like to isotope $\alpha_1$ across $B$ or $B'$, but we are impeded by the fact that there may be arcs of $\partial F$ other than $\alpha_1$ running though $B$. Pick one of these arcs that is outermost on $B$ or $B'$ and label the disc it cuts off $G$. This is a bigon or triangle whose interior is disjoint from $\partial F \cup c$ and whose boundary consists of a subarc, $\sigma$, of $\alpha_i$, for some $i$, a subarc, $\tau$, of $\beta_1$ and possibly a subarc, $\gamma$, of $\partial X$. Now isotope $F$ by sliding $\alpha_i$ across $G$ in a small collar neighborhood of $\partial M$ to form a new surface. More precisely, choose a product structure $X \times I$, where $I = [0,1]$, on a small regular neighborhood of $X$ in $M$ so that $(\partial F \cap X)\times \{0\} = \partial F \cap X$, the components of $F \cap (X \times I)$ are vertical in $X \times I$ and the arcs of 1-skeleton emanating away from $X$ into the interior of $M$ or parts of $\partial M$ outside $X$ are at most a small deviation away from being vertical. Also suppose that $(\partial X \times I) \cap \partial M = \partial X \times I$. The new surface obtained by sliding $F$ across $G$ is then obtained by removing $\sigma \times I$, inserting $(G \times \{1\}) \cup (\tau \times I)$ and then pushing the resulting surface a little further to make it disjoint from $G$. Note that performing this operation increases the edge degree of $F$ by at most $12t + 4n$, the first term being an upper bound on the number of possible new intersections of $F$ with the arcs of 1-skeleton not entirely in $X$, and the second term being an upper bound on the number of new intersections of $F$ with arcs entirely in $X$. Repeat this operation to remove all arcs of intersection of $B$ or $B'$ with $\partial F$ and then slide $F$ across $B$ or $B'$ itself. Repeat this proceed until $\alpha_1$ and $\beta_1$ are disjoint. The edge degree has been increased by at most $16n^2(12t +4n)$. Now, $F$ has been arranged so that $\alpha_1$ is disjoint from $\beta_1$. Hence $\alpha_1$ and $\beta_1$ cobound an annulus or, together with two arcs of $\partial X$, a disc. Call this annulus or disc $A$. We wish to isotope $\alpha_1$ across $A$ to make $\alpha_1$ equal to $\beta_1$. This time we may be impeded by arcs of intersection of $\partial F$ with $A$, which may be removed as before, or by entire curves or arcs of intersection of $\partial F$ with $A$, which may be removed by sliding across annuli or discs in a fashion similar to sliding across bigons and triangles. The effect of the removal of bigons and triangles on edge degree is already incorporated in our previous estimate. Sliding across an annulus or disc increases edge degree by at most $12t + 4n$ and we need to do this at most $m-1 \leq 4n-1$ times before $\partial F$ is disjoint from the interior of $A$. The final isotopy we perform at this stage is across $A$ itself, except we do not make the final push away from $A$. Thus the edge degree since our last estimate has increased by at most $4n(12t + 4n)$. To recap, we have isotoped $F$ about so that $\alpha_1=\beta_1$ and so that the edge degree of $F$ has increased by at most $(16n^2 + 4n)(12t + 4n)$. 

We may now repeat the above procedure with the other components of $\partial F \cap X$, treating them one at a time and keeping components fixed once they have been isotoped to equal the corresponding component of $c$. Thus we form a surface whose boundary components intersect $X$ in a way that agrees with $c$ and whose edge degree increases overall by at most $m(16n^2 + 4n)(12t + 4n) \leq 4n(16n^2 + 4n)(12t + 4n)$. The initial surface $F$, before any isotopies were performed, had edge degree at most $4n$. Hence the total edge degree of our surface is at most $4n + 4n(16n^2 + 4n)(12t + 4n) \leq 3844n^4t$. Now apply the normalization procedure as in Theorem 3.3.21 of \cite{mat}. Our hypotheses mean that this procedure does not affect the isotopy class of $F$, nor does it affect the intersection of $F$ with $X$. Further, edge degree does not increase. The resulting surface is $F'$. The valence of an edge in $T$ is at most $6t$ and so the number of normal discs in $F'$ is certainly at most $\frac{6t}{3}3844n^4t < 8000n^4t^2$. \hfill $\square$

\bibliographystyle{amsplain}
\bibliography{rmrefs}

\providecommand{\bysame}{\leavevmode\hbox to3em{\hrulefill}\thinspace}
\providecommand{\MR}{\relax\ifhmode\unskip\space\fi MR }
\providecommand{\MRhref}[2]{%
  \href{http://www.ams.org/mathscinet-getitem?mr=#1}{#2}
}
\providecommand{\href}[2]{#2}
\begin{thebibliography}{10}

\bibitem{adams}
Colin Adams, \emph{The knot book. {A}n elementary introduction to the
  mathematical theory of knots}, W. H. Freeman and Company, New York, 1994.

\bibitem{cullshal}
M.~Culler and P.~B. Shalen, \emph{Bounded, separating, incompressible surfaces
  in knot manifolds}, Invent. Math. \textbf{75} (1984), no.~3, 537--545.

\bibitem{altrecog}
Fran{\c{c}}ois Dahmani and Daniel Groves, \emph{The isomorphism problem for
  toral relatively hyperbolic groups}, Publ. Math. Inst. Hautes \'Etudes Sci.
  (2008), no.~107, 211--290.

\bibitem{hakennormal}
Wolfgang Haken, \emph{Theorie der {N}ormalflachen}, Acta. Math. \textbf{105}
  (1961), 245--375.

\bibitem{HL}
Joel Hass and Jeffrey Lagarias, \emph{The number of {R}eidemeister moves needed
  for unknotting}, J. Amer. Math. Soc. \textbf{14} (2001), 399--428.

\bibitem{intcurve}
Joel Hass and Peter Scott, \emph{Intersections of curves on surfaces}, Israel
  J. Math. \textbf{51} (1985), no.~1-2, 90--120.

\bibitem{hatcher}
Allen Hatcher, \emph{Algebraic topology}, Cambridge University Press,
  Cambridge, 2002.

\bibitem{hayashi}
Chuichiro Hayashi, \emph{The number of {R}eidemeister moves for splitting a
  link}, Math. Ann. \textbf{332} (2005), no.~2, 239--252.

\bibitem{hemionart}
Geoffrey Hemion, \emph{On the classification of homeomorphisms of
  {$2$}-manifolds and the classification of {$3$}-manifolds}, Acta Math.
  \textbf{142} (1979), no.~1-2, 123--155.

\bibitem{JS}
William Jaco and Peter~B. Shalen, \emph{A new decomposition theorem for
  irreducible sufficiently-large {$3$}-manifolds}, Algebraic and geometric
  topology ({P}roc. {S}ympos. {P}ure {M}ath., {S}tanford {U}niv., {S}tanford,
  {C}alif., 1976), {P}art 2, Proc. Sympos. Pure Math., XXXII, Amer. Math. Soc.,
  Providence, R.I., 1978, pp.~71--84. \MR{520524 (80j:57008)}

\bibitem{JSJ2}
Klaus Johannson, \emph{Homotopy equivalences of {$3$}-manifolds with
  boundaries}, Lecture Notes in Mathematics, vol. 761, Springer, Berlin, 1979.
  \MR{551744 (82c:57005)}

\bibitem{lickpach}
W.~B.~R. Lickorish, \emph{Simplicial moves on complexes and manifolds},
  Proceedings of the {K}irbyfest ({B}erkeley, {CA}, 1998), Geom. Topol.
  Monogr., vol.~2, Geom. Topol. Publ., Coventry, 1999, pp.~299--320
  (electronic).

\bibitem{mat}
Sergei Matveev, \emph{Algorithmic {T}opology and {C}lassification of
  3-{M}anifolds}, Algorithms and Computation in Mathematics, vol.~9, Springer,
  2003.

\bibitem{alexs3}
Aleksandar Mijatovi{\'c}, \emph{Simplifying triangulations of the 3-sphere},
  Pacific J. Math. \textbf{208} (2003), no.~2, 291--324.

\bibitem{alexff}
\bysame, \emph{Triangulations of fibre-free haken 3-manifolds}, Pacific J.
  Math. \textbf{219} (2003), no.~1, 139--186.

\bibitem{alexsf}
\bysame, \emph{Triangulations of seifert fibred manifolds}, Math. Ann.
  \textbf{330} (2004), no.~2, 235--273.

\bibitem{alexknot}
\bysame, \emph{Simplical structures of knot complements}, Math. Res. Lett.
  \textbf{12} (2005), no.~5-6, 843--856.

\bibitem{nabutovsky}
Alexander Nabutovsky, \emph{Einstein structures: existence versus uniqueness},
  Geom. Funct. Anal. \textbf{5} (1995), no.~1, 76--91.

\bibitem{JSJ3}
Walter~D. Neumann and Gadde~A. Swarup, \emph{Canonical decompositions of
  {$3$}-manifolds}, Geom. Topol. \textbf{1} (1997), 21--40 (electronic).
  \MR{1469066 (98k:57033)}

\bibitem{pach1}
Udo Pachner, \emph{Konstruktionsmethoden und das kombinatorische
  {H}om\"oomorphieproblem f\"ur {T}riangulationen kompakter semilinearer
  {M}annigfaltigkeiten}, Abh. Math. Sem. Univ. Hamburg \textbf{57} (1987),
  69--86.

\bibitem{pach2}
\bysame, \emph{P.{L}. homeomorphic manifolds are equivalent by elementary
  shellings}, European J. Combin. \textbf{12} (1991), no.~2, 129--145.

\bibitem{RM}
Kurt Reidemeister, \emph{Knotten und {G}ruppen}, Abh. Math. Sem. Univ. Hamburg
  \textbf{5} (1927), 7--23.

\bibitem{RMbook}
\bysame, \emph{Knotentheorie}, Springer, Berlin, 1932.

\bibitem{sela}
Z.~Sela, \emph{The isomorphism problem for hyperbolic groups. {I}.}, Ann. of
  Math. (2) (1995), no.~141, 217--283.

\bibitem{shtanko}
M.~A. Shtan'ko, \emph{A theorem of {A}. {A}. {M}arkov and algorithmically
  unrecognizable combinatorial manifolds}, Izv. Ross. Akad. Nauk Ser. Mat.
  (2004), no.~68, 207--224.

\bibitem{thompson}
Abigail Thompson, \emph{Thin position and the recognition problem for {$S^3$}},
  Math. Res. Lett. \textbf{1} (1994), no.~5, 613--630.

\bibitem{tutte}
W.~T. Tutte, \emph{A census of planar maps.}, Canad. J. Math. \textbf{15}
  (1963), 249--271.

\bibitem{W}
Dominic Welsh, \emph{On the number of knots and links}, Colloq. Math. Soc.
  J\'anos Bolyai \textbf{60} (1992), 713--718.

\end{thebibliography}

\end{document}